\documentclass[11pt, reqno]{amsart}
\usepackage{amsmath, amsthm, amscd, amsfonts, amssymb, graphicx, color}
\usepackage{float}
\usepackage[bookmarksnumbered, colorlinks, plainpages]{hyperref}
\hypersetup{colorlinks=true,linkcolor=red, anchorcolor=green, citecolor=cyan, urlcolor=red, filecolor=magenta, pdftoolbar=true}

\newtheorem{theorem}{Theorem}[section]
\newtheorem{lemma}[theorem]{Lemma}
\newtheorem*{theorem*}{Question}
\newtheorem{proposition}[theorem]{Proposition}
\newtheorem{corollary}[theorem]{Corollary}
\theoremstyle{definition}
\newtheorem{definition}[theorem]{Definition}
\newtheorem{example}[theorem]{Example}

\newtheorem{problem}[theorem]{Problem}
\newtheorem{remark}[theorem]{Remark}
\numberwithin{equation}{section}

\begin{document}
	
	\setcounter{page}{1}

	\title[Convexity of the Berezin Range and Berezin Radius Inequalities ] {Convexity of Berezin Range and Berezin Radius Inequalities via a class of  Seminorm}
	\author[ P. Hiran Das, Athul  Augustine, Pintu Bhunia \MakeLowercase{and} P. Shankar]{ P. Hiran Das, Athul Augustine, Pintu Bhunia \MakeLowercase{and} P. Shankar}

	\address{P. Hiran Das, Department of Mathematics, Cochin University of Science And Technology,  Ernakulam, Kerala - 682022, India. }
	\email{\textcolor[rgb]{0.00,0.00,0.84}{hirandas939@gmail.com, hirandas073@cusat.ac.in}}
	
	\address{Athul Augustine, Department of Mathematics, Cochin University of Science And Technology,  Ernakulam, Kerala - 682022, India. }
	\email{\textcolor[rgb]{0.00,0.00,0.84}{athulaugus@gmail.com, athulaugus@cusat.ac.in}}

	\address{Pintu Bhunia, Department of Mathematics, SRM University AP, Amaravati 522240, Andhra Pradesh, India.}
	\email{\textcolor[rgb]{0.00,0.00,0.84}{pintubhunia5206@gmail.com}}
	
	\address{P. Shankar, Department of Mathematics, Cochin University of Science And Technology,  Ernakulam, Kerala - 682022, India.}
	
	\email{\textcolor[rgb]{0.00,0.00,0.84}{shankarsupy@gmail.com, shankarsupy@cusat.ac.in}}
	
	\subjclass[2020]{47A12, 47A30, 26E60, 46L05}
	
	\keywords{Seminorm, Berezin norm,  Berezin radius, Berezin radius inequality, Mean}

\begin{abstract}
	
    Let $B(\mathcal{H})$ denote the 
    $C^*$-algebra of all bounded linear operators acting on a reproducing 
    kernel Hilbert space $\mathcal{H}(\Omega).$  In this paper, we introduce a new family of seminorms on $B(\mathcal{H})$, 
    called the $\sigma_t$-Berezin norm,
    defined as
    $$
    \|A\|_{{ber}_{\sigma_t}} 
    = \sup_{\lambda,\mu\in \Omega}
    \left\{
    \left(
    \left|\left\langle A\hat{k}_\lambda,\hat{k}_\mu\right\rangle\right|^p 
    \, \sigma_t \,
    \left|\left\langle A^*\hat{k}_\lambda,\hat{k}_\mu\right\rangle\right|^p
    \right)^{\frac{1}{p}}
    \right\},
    $$
    where $A\in B(\mathcal{H}), ~p \geq 1, ~t \in [0,1]$ and ~$\sigma_t$ denotes an interpolation path of a symmetric mean $\sigma$.
    We show that this family of seminorms characterizes invertible operators that are unitary. Several fundamental properties of the $\sigma_t$-Berezin norm are established, along with a collection of new inequalities that yield refined upper bounds for the Berezin radius of bounded linear operators, thereby improving  existing results in the literature.
    
     Furthermore, we investigate the convexity of the Berezin range  of operators  acting on weighted Hardy space and Fock space over $\mathbb{C}^n$. We characterised the convexity of the Berezin range of composition operator with elliptic automorphism and finite rank operators
     with different weights on the weighted Hardy space. We also characterized  convexity of the Berezin range of composition operator on 
	 Fock space over $\mathbb{C}^n$ with symbol $\phi(z)=Az$, where $A$ is a scalar matrix of order $n$.
\end{abstract}
	\maketitle
\section{Introduction}
    A \emph{reproducing kernel Hilbert space} (RKHS) $\mathcal{H}=\mathcal{H}(\Omega)$ on a nonempty set $\Omega$ is a Hilbert space of complex-valued functions such that, for each $\lambda  \in \Omega$, the point evaluation functional $E_{\lambda} : \mathcal{H} \to \mathbb{C}$ defined by
    $ E_{\lambda}(f) = f(\lambda)
    $ is bounded. By the Riesz representation theorem, for every  $\lambda \in \Omega$ there exists a unique element $k_\lambda \in \mathcal{H}$ such that $f(\lambda) = \langle f, k_\lambda \rangle,  \text{for all } f \in \mathcal{H}.$ The function $k_\lambda$ is called the \emph{reproducing kernel} of $\mathcal{H}$ at $\lambda$. The \emph{normalized reproducing kernel} at $\lambda$ is defined by
    $
    \hat{k}_\lambda = \frac{k_\lambda}{\|k_\lambda\|}.
    $ For more details on reproducing kernel Hilbert space, refer \cite{paulsen2016introduction}.

    For a bounded linear operator $A\in B(\mathcal{H})$, the Berezin transform (or Berezin symbol) of $A$ at $\lambda$ is defined by $\widetilde{A}(\lambda):= \langle A \hat{k}_\lambda, \hat{k}_\lambda \rangle$. The Berezin range (or Berezin set) and the Berezin radius (or Berezin number) of an operator $A$ are defined, respectively, as
    $$ \textit{Ber}(A) := \{\widetilde{A}(\lambda) : \lambda \in \Omega\}\quad \text{and}\quad \textit{ber}(A) := \sup_{\lambda \in \Omega}\{|\widetilde{A}(\lambda)|\}. $$
    The Berezin transform of an operator on a reproducing kernel Hilbert space was first introduced by F. A. Berezin \cite{berezin1972covariant}. 
    It follows directly from the definition that the Berezin range is contained in the numerical range of $A$, and hence the Berezin radius satisfies $\textit{ber}(A) \leq \|A\|$. By the Toeplitz–Hausdorff theorem \cite{gustafson1970toeplitz}, the numerical range of a bounded operator is always convex. 
    However, the Berezin range need not be convex in general. 
    The convexity of the Berezin range for operators on reproducing kernel Hilbert spaces has been studied in several works; see, for instance, \cite{augustine2025convexity,augustine2023composition,augus2024,cowen22,karaev2013reproducing,anirben}. For a detailed study of  the Berezin radius inequalities, we refer to \cite{newfam,bakherad2021some,bhunia2026strengthening,bhunia2023some,newnorm,bhunia2024berezin,garayev2023weighted,mainpaper,zamani2024berezin}.

    For a bounded linear operator $A\in B(\mathcal{H})$,
    the Berezin norm is defined by
    $$
    \|A\|_{ber}=\sup_{\lambda,\mu \in \Omega}|\langle A\hat{k}_{\lambda},\hat{k}_{\mu}\rangle|.
    $$
    The Berezin norm defines an operator norm on $B(\mathcal{H})$. Moreover, it is easy to observe that
    $$
    \textit{ber}(A) \leq \|A\|_{ber} \leq \|A\|.
    $$
    Recently, Bhunia et al.~\cite{equality} proved that for a positive operator $A\in B(\mathcal{H})$
    $$
    \textit{ber}(A) = \|A\|_{ber} .
    $$
    To introduce the class of seminorms considered in this paper, we next recall the concept of means and interpolation paths.

    A mean is a nonnegative function $\sigma:[0,\infty)\times [0,\infty)\rightarrow [0,\infty)$ that satisfies the following conditions (see \cite{bhatia,geometric}) :
\begin{enumerate}
    \item[(i)]$\sigma(a,b)\geq 0;$  
    \item[(ii)]if $a\leq b$, $a\leq \sigma(a,b)\leq b;$  
    \item[(iii)]$\sigma(a,b)$ is monotone increasing in both a and b;  
    \item[(iv)]For $\alpha>0$, $\sigma(\alpha a,\alpha b)=\alpha\sigma(a,b)$ (homogeneity);  
    \item[(v)] $\sigma(a,b)$ is continuous.  
\end{enumerate}

    For convenience, throughout this paper, we denote $\sigma(a, b)$ by $a \sigma b$.
    A mean, $\sigma$ is called a symmetric mean if $a\sigma b= b\sigma a$. For a symmetric mean $\sigma$, a parameterized operator mean $\sigma_t$, for each $t\in[0,1]$, is called an interpolation path for $\sigma$ if it satisfies the following conditions:
\begin{enumerate}
    \item[(i)]$a\sigma_0 b=b,a\sigma_1 b=a,a \sigma_\frac{1}{2} b=a\sigma b~ \forall ~ a,b\geq0;$  
    \item[(ii)]$(a\sigma_t b)\sigma(a\sigma_s b) = a\sigma_\frac{t+s}{2} b, \forall~t,s \in [0,1];$  
    \item[(iii)]For each $0\leq t \leq 1$, $\sigma_t$ is increasing in each of its components.  
\end{enumerate}
    The arithmetic, geometric, and harmonic means are  examples of symmetric means. 
    For two nonnegative numbers $a$ and $b$, the arithmetic mean is defined by 
    $
    a \nabla b = \frac{a+b}{2},
    $
    and its interpolation path is given by
    $
    a \nabla_t b = t a + (1-t)b, \quad 0 \le t \le 1.
    $
    For additional details, see \cite{mond}.

    In \cite{mainpaper},  Nayak and  Bhunia introduced a new norm, called the $t$-Berezin norm, on  $B(\mathcal{H})$.

\begin{definition}\label{definition1.1}\cite{mainpaper}
	Let $A\in B(\mathcal{H})$. Then for each $t\in [0,1],$ the t-Berezin norm is defined as 
	$$
	\|A\|_{ber_t}
	=
	\sup_{\lambda,\mu \in \Omega}
	\left\{
	t \bigl|\langle A \hat{k}_{\lambda}, \hat{k}_{\mu} \rangle\bigr|
	+
	(1-t) \bigl|\langle A^{*} \hat{k}_{\lambda}, \hat{k}_{\mu} \rangle\bigr|
	\right\},
	\quad t \in [0,1].
	$$
\end{definition}

    It can be verified that, for $t \in [0,1]$, the $t$-Berezin norm defines a norm on ${B}(\mathcal{H})$. We now extend this definition to general interpolation paths of symmetric means.

\begin{definition}\label{definition1.2}
	Let $A\in B(\mathcal{H})$ and for $t\in [0,1],$ let $\sigma_t$ be an interpolation path of the symmetric mean $\sigma$. Then the $\sigma_t$-Berezin norm of $A$, denoted by $\|A\|_{{ber}_{\sigma_t}},$ is defined as
	$$\|A\|_{{ber}_{\sigma_t}} = \sup_{\lambda,\mu\in \Omega}\left\lbrace\left(\left|\left\langle A\hat{k}_\lambda,\hat{k}_\mu\right\rangle\right|^p~\sigma_t ~ \left|\left\langle A^*\hat{k}_\lambda,\hat{k}_\mu\right\rangle\right|^p\right)^{\frac{1}{p}}\right\rbrace,$$
	where $p\geq 1$.
\end{definition}
    Clearly, Definition \ref{definition1.1} follows from Definition \ref{definition1.2} by taking $\sigma = \nabla $, the arithmetic mean and $p=1$.

    In this paper, we investigate the fundamental properties of the $\sigma_t$-Berezin norm and generalize several known bounds for the Berezin radius of bounded linear operators on $B(\mathcal{H})$ by deriving new inequalities involving the $\sigma_t$-Berezin norm. In addition, we apply this norm to characterize invertible operators that are unitary. We further study the convexity of the Berezin range for classes of operators acting on weighted Hardy space and Fock space over $\mathbb{C}^n$. In particular, we studied weighted Hardy space with weight $\beta_n^2 = (\frac{1}{\beta})^n$ for $\beta \in (0,1]$ and characterized the convexity of composition operator with symbol $\phi(z)=\eta z, $ where $\eta \in \overline{\mathbb{D}}.$ Also, we discussed the convexity of finite rank operators on the weighted Hardy space with the  same weight. Further, we characterized the convexity of the Berezin range of composition operators on the Fock space with symbols $\phi(z)=Az,$ where $A=\lambda I, \lambda \in \overline{\mathbb{D}}$ and $\phi(z)=A_kz$ such that, for a fixed $k\in\{1,2,...,n\},$ $A_k=[a_{kj}]$ is a diagonal matrix with entries $a_{kk}=a+ib$ and $a_{jj}=1$ $\forall$ $k\not=j$, where $a^2+b^2\leq 1$.   Our results extend and unify several existing results on the convexity of the Berezin range.
\section{preliminaries}
	Before proving the main results of this paper, we recall  some auxiliary results that will be used throughout the paper.

    Let $A \in B(\mathcal{H})$, then the polar decomposition of $A$ is given by 
    $
    A=\mathcal{U}|A|,
    $
  where $\mathcal{U}$ is a partial isometry and $|A|=(A^*A)^{\frac{1}{2}}.$        
\begin{lemma}\cite{generalisedinequality}
	Let $A, B \in B(\mathcal{H})$ have polar decompositions 
	$$
	A=\mathcal{U}|A|, \qquad  	B=\mathcal{V}|B|
	$$
	and let $f,g :[0,\infty) \to [0,\infty)$ be nonnegative continuous. Then for all $x, y \in \mathcal{H}$  the following inequality holds:
\begin{equation}\label{2.1}
	\left|\left\langle g\left(|B|\right)f\left(|A^*|\right)\mathcal{U}x,y\right\rangle\right|^2\leq \left\langle f^2(|A|)x,x\right\rangle \left\langle g^2(|B|)y,y\right\rangle,
\end{equation}
\begin{equation}\label{2.2}
	\left|\left \langle g\left(|A|\right)f\left(|B^*|\right)\mathcal{V}x,y\right\rangle\right|^2 \leq \left\langle f^2(|B|)x,x\right\rangle \left\langle g^2(|A|)y,y\right\rangle.
\end{equation}
    In particular, if $f$ and $g$ are nonnegative continuous functions on $[0,\infty)$ such that $f(t)g(t)=t,$ then 
\begin{equation}\label{2.3}
	\left|\left\langle Ax,y\right\rangle\right|^2\leq \left\langle f^2(|A|)x,x\right\rangle \left\langle g^2(|A^*|)y,y\right\rangle.
\end{equation}
\end{lemma}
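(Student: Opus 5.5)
The plan is to derive everything from the mixed Schwarz inequality of Kittaneh type, applied to operators assembled from the polar factors. Inequality \eqref{2.3} is a consequence of \eqref{2.1}, so I will prove \eqref{2.1} first, then obtain \eqref{2.2} by interchanging roles, and finally specialize.

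For \eqref{2.1}, write $T = g(|B|)f(|A^*|)\mathcal{U}$. The key identity is $f(|A^*|)\mathcal{U} = \mathcal{U} f(|A|)$. This holds on the closure of the range of $|A|$, because $|A^*| = \mathcal{U}|A|\mathcal{U}^*$, and hence $|A^*|^n\mathcal{U} = \mathcal{U}|A|^n$ for every $n\geq 1$. Polynomial approximation together with continuity of $f$ (Weierstrass on $[0,\|A\|]$) then gives $h(|A^*|)\mathcal{U} = \mathcal{U}h(|A|)$ for every continuous $h$ with $h(0)=0$.

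The main obstacle is the case $f(0)\neq 0$. There the identity is really $f(|A^*|)\mathcal{U} = \mathcal{U}f(|A|)$ restricted to $\ker(\mathcal{U})^\perp$. Since $\mathcal{U}$ already annihilates $\ker|A|$, one only needs $\mathcal{U}f(|A|)x$ for $x$ in the initial space, and $f(|A|)$ commutes with the projection $\mathcal{U}^*\mathcal{U}$ onto $\overline{\operatorname{ran}}|A|$. I would handle this by writing $f = f(0) + (f-f(0))$ and using $\mathcal{U}^*\mathcal{U}\,\mathcal{U}^*=\mathcal{U}^*$.

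Granting the identity, we get
$$
|\langle Tx,y\rangle| = |\langle \mathcal{U} f(|A|)x,\, g(|B|)y\rangle| \le \|\mathcal{U} f(|A|)x\|\,\|g(|B|)y\| \le \|f(|A|)x\|\,\|g(|B|)y\|,
$$
using that $\mathcal{U}$ is a contraction. Squaring, and using $\|f(|A|)x\|^2 = \langle f^2(|A|)x,x\rangle$ and likewise for $g$, yields \eqref{2.1}.

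Inequality \eqref{2.2} is \eqref{2.1} with the roles of $A$ and $B$ swapped, so it needs no separate argument.

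For \eqref{2.3}, take $B=A^*$ in \eqref{2.1}, so that $|B|=|A^*|$. Then $g(|A^*|)f(|A^*|)\mathcal{U} = |A^*|\mathcal{U} = \mathcal{U}|A| = A$, because $fg(t)=t$ and by the intertwining identity above. This gives \eqref{2.3} directly.
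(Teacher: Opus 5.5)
Your proof is correct and is the standard argument behind this cited lemma (the paper itself gives no proof, only the reference). You intertwine $f(|A^*|)\mathcal{U}=\mathcal{U}f(|A|)$, move the self-adjoint $g(|B|)$ across, apply Cauchy--Schwarz together with $\|\mathcal{U}\|\le 1$, obtain \eqref{2.2} by symmetry, and get \eqref{2.3} by taking $B=A^*$, using $(gf)(|A^*|)\mathcal{U}=|A^*|\mathcal{U}=\mathcal{U}|A|=A$.

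One correction: your ``main obstacle'' is not an obstacle. Since $c\,\mathcal{U}=\mathcal{U}\,c$ for constants, $p(|A^*|)\mathcal{U}=\mathcal{U}p(|A|)$ holds for every polynomial $p$, not just those with $p(0)=0$. Weierstrass approximation then gives $f(|A^*|)\mathcal{U}=\mathcal{U}f(|A|)$ on all of $\mathcal{H}$ for every continuous $f$. So no restriction to $\ker(\mathcal{U})^{\perp}$ and no case split on $f(0)$ is needed.
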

\begin{lemma}\cite{complex}
	Let $a_i$ be positive numbers for $i=1,2,...,n.$ Then for $p \geq 1$
\begin{equation}\label{2.4}
	\left(\sum_{i=1}^{n}a_i\right)^p \leq n^{p-1}\sum_{i=1}^{n} {a_i}^p.
\end{equation}
\end{lemma}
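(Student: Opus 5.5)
This inequality is a direct consequence of the convexity of the power function $\phi(x)=x^p$ on $[0,\infty)$ for $p\geq 1$, and I would prove it through Jensen's inequality for finite sums with equal weights. No earlier result of the paper is needed.

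First, the case $p=1$ is an equality, since $n^{0}=1$. So assume $p>1$, where $\phi$ is convex on $[0,\infty)$: its second derivative $p(p-1)x^{p-2}$ is nonnegative for $x>0$, and $\phi$ is continuous at $0$.

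Next, apply the finite Jensen inequality with the uniform weights $\frac1n$ to the points $a_1,\dots,a_n$:
$$\left(\frac{1}{n}\sum_{i=1}^{n}a_i\right)^p\leq \frac{1}{n}\sum_{i=1}^{n}a_i^p .$$
The finite Jensen inequality itself follows from the two-point convexity definition by induction on $n$. Multiplying both sides by $n^p$ gives
$$\left(\sum_{i=1}^{n}a_i\right)^p\leq n^{p-1}\sum_{i=1}^{n}a_i^p,$$
which is \eqref{2.4}.

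As an alternative route, which also serves as a check, one can apply Hölder's inequality with exponents $p$ and $q=\frac{p}{p-1}$ to the vectors $(a_i)$ and $(1,\dots,1)$:
$$\sum_{i=1}^{n} a_i\cdot 1\leq \left(\sum_{i=1}^{n} a_i^p\right)^{1/p} n^{1/q}.$$
Raising both sides to the power $p$ gives the same bound, because $n^{p/q}=n^{p-1}$.

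I do not expect any real obstacle. The only points needing care are the separate treatment of $p=1$, where $q$ is undefined in the Hölder route, and the fact that positivity of the $a_i$ is only needed to make $a_i^p$ meaningful for non-integer $p$, so the result extends to nonnegative $a_i$ as well.
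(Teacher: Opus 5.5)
Your proof is correct. The paper does not prove this lemma; it only cites it from the literature. Your Jensen argument with equal weights $\frac1n$, cross-checked by H\"older with exponents $p$ and $\frac{p}{p-1}$, is the standard proof.

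Your closing remark also matters for the paper. Every later application uses nonnegative quantities such as $|\langle A k_{\lambda_2},k_{\mu_1}\rangle|$ or products $\langle f^2(|A_i|)\hat{k}_\lambda,\hat{k}_\lambda\rangle^{1/2}\langle g^2(|B_i|)\hat{k}_\mu,\hat{k}_\mu\rangle^{1/2}$, and these can vanish. So the nonnegative version you point out, covered by convexity of $x^p$ on all of $[0,\infty)$, is exactly what those applications need.
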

\begin{lemma}\cite[Theorem 5]{notes}
	Let $A,B \in B(\mathcal{H})$ be such that $|A|B=B^*|A|$, and let $f$ and $g$ be two nonnegative continuous functions defined on $[0,\infty)$ such that $f(t)g(t)=t$ for every $t\geq 0$. Then
\begin{equation}\label{2.5}
	|\langle AB x,y\rangle|^2 \leq r(B)\langle f^2(|A|)x,x\rangle \langle g^2(|A^*|)y,y\rangle,
\end{equation}
    for every $x,y\in \mathcal{H}$, where $r(B)$ denotes the spectral radius of $B$.
\end{lemma}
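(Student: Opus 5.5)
This lemma is quoted from \cite[Theorem 5]{notes}, so in the paper it is assumed. If I were to prove it, the plan is to reduce it to the mixed Schwarz inequality \eqref{2.3} applied to the product $AB$, and to extract the factor $r(B)$ from the intertwining relation $|A|B=B^*|A|$.

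\textbf{Step 1: transport the intertwining to functions of $|A|$.} From $|A|B=B^*|A|$, induction gives $|A|^nB=B^{*}|A|^n$ for every $n$. Taking adjoints also gives $B^*|A|=|A|B$, so the relation is symmetric. I would then pass to the positive operator $B^*|A|$, which equals $|A|B$ and is self-adjoint. The working identity is
\[
\langle |A|Bx,x\rangle=\langle x,|A|Bx\rangle ,
\]
so $|A|B$ is a self-adjoint operator.

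\textbf{Step 2: a power trick to get the spectral radius.} Write $A=\mathcal U|A|$, so $ABx=\mathcal U|A|Bx$. I would apply the generalized inequality \eqref{2.1} with suitable $f,g$, or argue directly in the following way. By Cauchy--Schwarz, $|\langle ABx,y\rangle|^2\le \langle g^2(|A^*|)y,y\rangle$ times a quantity of the form $\langle |A|B\,h(|A|)x, B\,h(|A|)x\rangle$. The self-adjointness of $|A|B$ then lets me bound quantities of the form $\langle |A|Bz,z\rangle$ by iterating Cauchy--Schwarz:
\[
|\langle |A|Bz,z\rangle|\le \langle |A|B^{2^k}z,z\rangle^{2^{-k}}\langle |A|z,z\rangle^{1-2^{-k}} .
\]
This is Reid's classical inequality argument. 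Letting $k\to\infty$ and using $\|B^{2^k}\|^{2^{-k}}\to r(B)$ gives
\[
|\langle |A|Bz,z\rangle|\le r(B)\langle |A|z,z\rangle .
\]

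\textbf{Step 3: insert $f$ and $g$.} With $z$ adapted via $f(|A|)$ and $g$, I combine the Reid-type bound with \eqref{2.3} to reach \eqref{2.5}.

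\textbf{Main obstacle.} The hard part is Step 2 for general $f,g$ with $f(t)g(t)=t$. The $g$ factor must land on $|A^*|=\mathcal U|A|\mathcal U^*$. I would first handle $f(t)=t^{\alpha}$, $g(t)=t^{1-\alpha}$ via functional calculus commuting with the relation. I would then pass to general $f,g$ by the standard factorisation $|A|=f(|A|)g(|A|)$ and by $\mathcal U g(|A|)=g(|A^*|)\mathcal U$.
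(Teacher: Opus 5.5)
\textbf{There is a genuine gap, and it cannot be closed: \eqref{2.5} as stated is false.} The paper only quotes this lemma, so there is no proof in the paper to compare with. Take $A=I$ and $B=2I$. Then $|A|B=B^*|A|$, and for any admissible $f,g$ the right-hand side of \eqref{2.5} is $r(B)f(1)^2g(1)^2\|x\|^2\|y\|^2=2\|x\|^2\|y\|^2$. For a unit vector $x=y$, the left-hand side is $4$. The left side is quadratic in $B$ while the right side is linear, so the constant must be $r^2(B)$.

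Your Step 2, done carefully, gives exactly this. Since $|A|B$ is self-adjoint, $B$ is symmetric for the semi-inner product $[u,v]=\langle |A|u,v\rangle$. The Reid--Halmos bound $|[Bz,z]|\le r(B)[z,z]$, combined with the polarization argument for symmetric operators, gives $|[Bu,v]|\le r(B)[u,u]^{1/2}[v,v]^{1/2}$. Using $\langle ABx,y\rangle=[Bx,\mathcal U^*y]$ and $\mathcal U|A|\mathcal U^*=|A^*|$, you obtain
\[
|\langle ABx,y\rangle|^2\le r^2(B)\,\langle |A|x,x\rangle\,\langle |A^*|y,y\rangle .
\]
Nothing in Step 3 can turn $r^2(B)$ into $r(B)$.

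\textbf{The second gap is Step 1, on which your treatment of general $f,g$ rests.} The hypothesis $|A|B=B^*|A|$ only says that $|A|B$ is self-adjoint; it need not be positive. The relation does pass to powers of $B$, namely $|A|B^n=B^{*n}|A|$, and that is all the iterated Cauchy--Schwarz needs. It does not pass to powers or functions of $|A|$: $|A|^2B=B^*|A|^2$ would force $|A|$ to commute with $|A|B$.

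For $A=\mathrm{diag}(1,2)$ and $B=\begin{pmatrix}0&1\\ 1/2&0\end{pmatrix}$:
\begin{itemize}
\item $|A|B=\begin{pmatrix}0&1\\1&0\end{pmatrix}$ is self-adjoint but not positive, with eigenvalues $\pm1$.
\item $|A|^2B=\begin{pmatrix}0&1\\2&0\end{pmatrix}$ is not self-adjoint.
\end{itemize}

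This example breaks the conclusion itself, not just your method. Here $|A|=|A^*|=A$ and $r(B)=2^{-1/2}$. Take $x=e_1$, $y=e_2$, $f(t)=t^{\alpha}$ and $g(t)=t^{1-\alpha}$. Then $|\langle ABx,y\rangle|^2=1$, whereas
\[
r^2(B)\langle f^2(|A|)x,x\rangle\langle g^2(|A^*|)y,y\rangle=2^{1-2\alpha},
\]
which is less than $1$ for every $\alpha>\tfrac12$. For $\alpha=1$, i.e.\ $f(t)=t$ and $g\equiv 1$, even the right-hand side of \eqref{2.5} with $r(B)$ equals only $2^{-1/2}<1$.

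So your approach genuinely proves only the case $f(t)=g(t)=t^{1/2}$, with constant $r^2(B)$. The planned extension to $t^{\alpha}$ ``via functional calculus commuting with the relation'', and from there to general $f,g$ with $f(t)g(t)=t$, has to be abandoned.
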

\begin{lemma}\cite{simon}
	If $T\in B(\mathcal{H})$ is positive, then 
\begin{equation}\label{2.6}
	\langle Tx,x\rangle^p \leq \langle T^p x,x\rangle,~ \forall ~ p\geq 1,~ \forall ~ x\in \mathcal{H},~ \|x\|=1.	
\end{equation}
	The inequality \eqref{2.6} is reversed when $0\leq p\leq 1$.
\end{lemma}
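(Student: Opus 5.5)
We would prove \eqref{2.6} with the spectral theorem combined with Jensen's inequality. Since $T$ is positive, it has a spectral resolution $T=\int_{\sigma(T)}\lambda\, dE(\lambda)$, where $E$ is a projection-valued measure supported on $\sigma(T)\subseteq[0,\|T\|]$. Fix $x\in\mathcal{H}$ with $\|x\|=1$. Define the scalar measure $\nu_x(\Delta)=\langle E(\Delta)x,x\rangle$. It is a positive Borel measure on $[0,\|T\|]$ with total mass $\nu_x([0,\|T\|])=\langle x,x\rangle=1$, so it is a probability measure.

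By the functional calculus, for every continuous $\varphi$ on $\sigma(T)$ we have
\[
\langle \varphi(T)x,x\rangle=\int_{[0,\|T\|]}\varphi(\lambda)\,d\nu_x(\lambda).
\]
In particular $\langle Tx,x\rangle=\int\lambda\,d\nu_x$ and $\langle T^px,x\rangle=\int\lambda^p\,d\nu_x$, where $T^p$ is defined through the functional calculus with $\varphi(\lambda)=\lambda^p$, which is continuous on $[0,\infty)$ for $p>0$.

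For $p\geq1$ the function $\varphi(\lambda)=\lambda^p$ is convex on $[0,\infty)$. Jensen's inequality for the probability measure $\nu_x$ then gives
\[
\Big(\int\lambda\,d\nu_x\Big)^p\le\int\lambda^p\,d\nu_x,
\]
which is exactly $\langle Tx,x\rangle^p\le\langle T^px,x\rangle$.

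For $0<p\le1$ the same function is concave on $[0,\infty)$, so Jensen's inequality reverses and yields $\langle Tx,x\rangle^p\ge\langle T^px,x\rangle$. The case $p=0$ is trivial: with the convention $T^0=I$ both sides equal $1$.

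The main point needing care is the normalization $\|x\|=1$. It is exactly what makes $\nu_x$ a probability measure, and hence what makes Jensen's inequality applicable. For a non-unit vector the inequality fails by homogeneity, since the two sides scale as $\|x\|^{2p}$ and $\|x\|^2$ respectively.

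If one prefers to avoid measure theory, there is an elementary alternative for $p\ge1$. Use the supporting-line inequality $\lambda^p\ge a^p+pa^{p-1}(\lambda-a)$, valid for $\lambda\ge0$ and $a=\langle Tx,x\rangle\ge0$. Both sides are continuous functions of $\lambda$ on $\sigma(T)$, so the inequality passes to the operator inequality
\[
T^p\ge a^pI+pa^{p-1}(T-aI)
\]
by the order-preserving property of the functional calculus. Taking $\langle\cdot\,x,x\rangle$ with $\|x\|=1$, the term $\langle (T-aI)x,x\rangle$ vanishes and we are left with $\langle T^px,x\rangle\ge a^p$.

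For $0<p\le1$ the tangent line lies above the concave function, so the operator inequality reverses and the same argument gives the reverse inequality. When $a=0$ and $p<1$ the tangent slope $pa^{p-1}$ is undefined. In that case $\langle Tx,x\rangle=0$ together with $T\ge0$ forces $T^{1/2}x=0$, hence $Tx=0$. Then $x$ lies in the kernel of $T$, so $T^px=0$ and both sides vanish.
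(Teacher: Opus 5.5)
Your proof is correct and follows the standard route. The paper gives no argument of its own and only cites this (McCarthy's) inequality from \cite{simon}; the usual proof there is the one you give, writing $\langle Tx,x\rangle$ and $\langle T^px,x\rangle$ as integrals against the spectral probability measure $\nu_x$ and applying Jensen's (equivalently H\"older's) inequality. Your tangent-line variant is also sound, including the degenerate case $\langle Tx,x\rangle=0$ handled via $T^{1/2}x=0$; the one imprecision is the aside on non-unit vectors, since the inequality can fail only when $\|x\|>1$ and still holds whenever $\|x\|\le 1$.
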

\begin{lemma}\cite{buzano}
	Let $x,y,e \in \mathcal{H}$ with $\|e\|=1$, then
\begin{equation}\label{2.7}
	|\left\langle x,e\right\rangle\left\langle e,y\right\rangle| \leq \frac{1}{2}\left(\left|\left\langle x,y \right\rangle \right|+\|x\|\|y\|\right).
\end{equation}
\end{lemma}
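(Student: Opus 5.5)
The statement to prove is the Buzano inequality \eqref{2.7}: for $x,y,e\in\mathcal{H}$ with $\|e\|=1$,
$$|\langle x,e\rangle\langle e,y\rangle| \le \tfrac12\left(|\langle x,y\rangle|+\|x\|\|y\|\right).$$
I will prove it by rewriting the left side with the reflection operator $R=2P-I$, where $P=e\otimes e$ is the orthogonal projection onto $\operatorname{span}\{e\}$.

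\textbf{Step 1 (identity).} Since $Px=\langle x,e\rangle e$ and $\|e\|=1$,
$$\langle x,e\rangle\langle e,y\rangle=\langle Px,y\rangle.$$
Write $P=\tfrac12(I+R)$. Because $P$ is a self-adjoint idempotent, $R$ is self-adjoint with $R^2=I$, so $R$ is unitary and $\|R\|=1$. Hence
$$\langle x,e\rangle\langle e,y\rangle=\tfrac12\left(\langle x,y\rangle+\langle Rx,y\rangle\right).$$

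\textbf{Step 2 (estimate).} Apply the triangle inequality to the right side of Step 1. Then use Cauchy--Schwarz together with $\|R\|=1$:
$$|\langle Rx,y\rangle|\le\|Rx\|\|y\|=\|x\|\|y\|.$$
Combining these bounds gives exactly \eqref{2.7}.

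The only step that needs care is checking that $2P-I$ is unitary, that is, $(2P-I)^2=4P^2-4P+I=I$. Everything after that is routine.
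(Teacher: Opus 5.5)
The paper does not prove this lemma. It is quoted from \cite{buzano} (Buzano's inequality) and used as a black box in the proof of Theorem~\ref{3.12}, so there is no in-paper argument to compare yours against.

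Your proof is correct and complete. You write the rank-one projection $Px=\langle x,e\rangle e$ as $P=\tfrac12(I+R)$, where $R=2P-I$ is a self-adjoint unitary, and then apply the triangle inequality and Cauchy--Schwarz with $\|Rx\|=\|x\|$. This is the well-known short proof of Buzano's inequality, the argument of Fujii and Kubo.

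Compared with simply citing the result, or with the more computational route of splitting $x$ and $y$ into components along $e$ and orthogonal to $e$, your argument is self-contained. It also explains the constant $\tfrac12$ as an average of the identity with a unitary. It gives more for free as well, since you never use that $P$ has rank one. The same two lines show $|\langle Px,y\rangle|\le\tfrac12\left(|\langle x,y\rangle|+\|x\|\|y\|\right)$ for every orthogonal projection $P$ on $\mathcal{H}$.
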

    A well known bound for the Berezin radius is given in the following theorem.
\begin{theorem}\cite[Corollary        3.4]{upperbound}\label{sigma}
	Let $A_i,B_i,X_i\in B(\mathcal{H})$ $(i=1,2,...,n)$ and $0<\alpha<1$. Then for $p\geq 1$,
	$$\textit{ber}^p\left(\sum_{i=1}^n A_i^*X_iB_i\right)\leq \frac{n^{p-1}}{2}\textit{ber}\left(\sum_{i=1}^n \left(\left[A_i^*|X_i^*|^{2(1-\alpha)}A_i\right]^p+\left[B_i^*|X_i|^{2\alpha}B_i\right]^p\right)\right).$$
\end{theorem}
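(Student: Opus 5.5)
We want to bound $\textit{ber}^p\left(\sum_{i=1}^n A_i^*X_iB_i\right)$ by
$$\frac{n^{p-1}}{2}\,\textit{ber}\left(\sum_{i=1}^n \left(\left[A_i^*|X_i^*|^{2(1-\alpha)}A_i\right]^p+\left[B_i^*|X_i|^{2\alpha}B_i\right]^p\right)\right).$$
The plan is to work pointwise at a fixed normalized kernel $\hat{k}_\lambda$ and take the supremum over $\lambda\in\Omega$ only at the very end.

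\textbf{Step 1 (one summand).} Fix $\lambda\in\Omega$ and set $T=\sum_i A_i^*X_iB_i$. By the triangle inequality,
$$|\widetilde{T}(\lambda)|\le \sum_i |\langle X_iB_i\hat{k}_\lambda, A_i\hat{k}_\lambda\rangle|.$$
For each $i$, apply the mixed Schwarz inequality \eqref{2.3} to $X_i$ with $f(t)=t^{\alpha}$ and $g(t)=t^{1-\alpha}$, taking $x=B_i\hat{k}_\lambda$ and $y=A_i\hat{k}_\lambda$. This gives
$$|\langle X_iB_i\hat{k}_\lambda, A_i\hat{k}_\lambda\rangle|\le \langle B_i^*|X_i|^{2\alpha}B_i\hat{k}_\lambda,\hat{k}_\lambda\rangle^{1/2}\langle A_i^*|X_i^*|^{2(1-\alpha)}A_i\hat{k}_\lambda,\hat{k}_\lambda\rangle^{1/2}.$$

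\textbf{Step 2 (AM--GM and power mean).} Write $P_i=A_i^*|X_i^*|^{2(1-\alpha)}A_i$ and $Q_i=B_i^*|X_i|^{2\alpha}B_i$; both are positive operators. By AM--GM each summand is at most $\tfrac12\bigl(\langle P_i\hat{k}_\lambda,\hat{k}_\lambda\rangle+\langle Q_i\hat{k}_\lambda,\hat{k}_\lambda\rangle\bigr)$. Raising the sum to the $p$-th power and using \eqref{2.4} on the $n$ terms yields
$$|\widetilde{T}(\lambda)|^p\le n^{p-1}\sum_i\Bigl(\tfrac{\langle P_i\hat{k}_\lambda,\hat{k}_\lambda\rangle+\langle Q_i\hat{k}_\lambda,\hat{k}_\lambda\rangle}{2}\Bigr)^p.$$
Convexity of $s\mapsto s^p$ then bounds each term by $\tfrac12\bigl(\langle P_i\hat{k}_\lambda,\hat{k}_\lambda\rangle^p+\langle Q_i\hat{k}_\lambda,\hat{k}_\lambda\rangle^p\bigr)$.

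\textbf{Step 3 (McCarthy and supremum).} Apply \eqref{2.6} with $\|\hat{k}_\lambda\|=1$ to get $\langle P_i\hat{k}_\lambda,\hat{k}_\lambda\rangle^p\le\langle P_i^p\hat{k}_\lambda,\hat{k}_\lambda\rangle$, and likewise for $Q_i$. Summing, the right-hand side becomes
$$\frac{n^{p-1}}{2}\,\widetilde{S}(\lambda),\qquad S=\sum_i (P_i^p+Q_i^p).$$
Since $S$ is positive, $\widetilde{S}(\lambda)=|\widetilde{S}(\lambda)|\le \textit{ber}(S)$. Taking the supremum over $\lambda$ gives the stated inequality.

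The argument is routine. The only point needing care is Step 1: the exponents in \eqref{2.3} must be assigned correctly, so that $|X_i|^{2\alpha}$ is paired with $B_i$ and $|X_i^*|^{2(1-\alpha)}$ is paired with $A_i$, as in the statement.
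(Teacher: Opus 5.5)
Your proof is correct. The choice $f(t)=t^{\alpha}$, $g(t)=t^{1-\alpha}$, $x=B_i\hat{k}_\lambda$, $y=A_i\hat{k}_\lambda$ in \eqref{2.3} produces exactly the pairing in the statement. The chain AM--GM, \eqref{2.4}, convexity of $s\mapsto s^p$ and \eqref{2.6} then gives precisely the constant $n^{p-1}/2$ before you take the supremum over $\lambda$.

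The paper gives no proof of this result; it is quoted from the cited source. Your argument is the standard one and uses the same ingredients (mixed Schwarz, power-mean and McCarthy inequalities) that the paper chains together in its own Section 3 proofs. Applying AM--GM before or after the power-mean and McCarthy steps does not change the final bound.
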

	The following are some special cases of Theorem \ref{sigma}.
\begin{corollary}\cite[Corollary 3.5]{upperbound}\label{bound}
	Let $A,B,X\in B(\mathcal{H})$. Then the following results hold:
\begin{itemize}
	\item[(i)]$\textit{ber}^p(A)\leq \frac{1}{2}\textit{ber}\left(|A|^p+|A^*|^p\right)\quad \forall~p\geq 1,$
	\item[(ii)]$\textit{ber}(A^*XB)\leq \frac{1}{2}\textit{ber}\left(|A|^2+|B|^2\right),$
	\item[(iii)]$\textit{ber}(A^*B)\leq \frac{1}{2}\textit{ber}\left(A^*|X^*|A+B^*|X^*|B\right).$
\end{itemize}
\end{corollary}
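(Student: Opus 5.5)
The three items of Corollary \ref{bound} will all be read off from Theorem \ref{sigma} by taking $n=1$ and choosing $A_1,B_1,X_1$ and $\alpha$ suitably. When $n=1$ the factor $n^{p-1}$ is $1$, so the theorem says
$$\textit{ber}^p(A_1^*X_1B_1)\leq \tfrac12\,\textit{ber}\left(\left[A_1^*|X_1^*|^{2(1-\alpha)}A_1\right]^p+\left[B_1^*|X_1|^{2\alpha}B_1\right]^p\right).$$
In each case the plan is to write the operator on the left as $A_1^*X_1B_1$ and then identify the two bracketed positive operators on the right.

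For (i), take $A_1=B_1=I$, $X_1=A$ and $\alpha=\tfrac12$. The left side is then $\textit{ber}^p(A)$. The two brackets become $(|A^*|^{1})^p=|A^*|^p$ and $(|A|^{1})^p=|A|^p$, which gives exactly (i).

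For (ii), take $p=1$, $\alpha=\tfrac12$, $A_1=A$, $X_1=X$ and $B_1=B$. The right side becomes $\tfrac12\textit{ber}(A^*|X^*|A+B^*|X|B)$. This is not literally the displayed $\tfrac12\textit{ber}(|A|^2+|B|^2)$, and neither is item (iii) as displayed. So I expect the items as printed to be garbled, and the intended statements to be the following two.
\begin{itemize}
\item[(ii)] $\textit{ber}(A^*XB)\leq\frac12\textit{ber}(A^*|X^*|A+B^*|X|B)$, obtained from $p=1$, $\alpha=\tfrac12$.
\item[(iii)] $\textit{ber}(A^*B)\leq\frac12\textit{ber}(|A|^2+|B|^2)$, obtained from $X_1=I$, $p=1$, any $\alpha$. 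Indeed $|I|^{s}=I$, so the brackets are $A^*A=|A|^2$ and $B^*B=|B|^2$.
\end{itemize}
I would prove these intended versions, since each is an immediate substitution.

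The main obstacle is matching the statement exactly as written. The printed (ii) has no reason to hold for arbitrary $X$: with $A=B=I$ it would force $\textit{ber}(X)\le 1$ for every $X$, which is false. The printed (iii) also needs $|X|=|X^*|$ in the second term, or it should read $B^*|X|B$. So the proof will note that the labels and formulas of (ii) and (iii) are interchanged and mistyped, and then prove the corrected forms by the substitutions above. It will also check that $\alpha=\tfrac12$ lies in $(0,1)$ and that $p=1$ is allowed.
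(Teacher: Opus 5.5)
Your proposal is correct and follows the paper's approach: the paper gives no separate argument and simply presents these items as $n=1$ specializations of Theorem \ref{sigma}, and your substitutions are exactly right ($A_1=B_1=I$, $X_1=A$, $\alpha=\tfrac12$ for (i); $p=1$, $\alpha=\tfrac12$ for $\textit{ber}(A^*XB)\le\frac12\textit{ber}(A^*|X^*|A+B^*|X|B)$; $X_1=I$, $p=1$ for $\textit{ber}(A^*B)\le\frac12\textit{ber}(|A|^2+|B|^2)$). You are also right that (ii) and (iii) as printed are garbled and false, so proving the swapped, corrected forms is the right move: $A=B=I$, $X=2I$ breaks the printed (ii), and $A=B=I$, $X=0$ breaks the printed (iii) regardless of whether its second term reads $|X^*|$ or $|X|$.
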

	
	\section{The $\sigma_t$- Berezin norm of Operators}
    In the beginning, we discuss some basic properties of the $\sigma_t$-Berezin norm, which  immediately follow from  Definition \ref{definition1.2}.

\begin{proposition}
	Let $A\in B(\mathcal{H})$. Then the following results hold:
\begin{itemize}
	\item[$(i)$]$\|A\|_{{ber}_{\sigma_t}} =\|A^*\|_{{ber}_{\sigma_t}} $.
	\item[$(ii)$]$\|A\|_{{ber}_{\sigma_t}} =0$ if and only if $A=0$.			
	\item[$(iii)$]$\|\lambda A\|_{{ber}_{\sigma_t}} =|\lambda|\|A\|_{{ber}_{\sigma_t}} $ for all $\lambda\in\mathbb{C}$.
	\item[$(iv)$] $\|A\|_{{ber}_{\sigma_t}}=\|A\|_{{ber}_{\sigma_{1-t}}}$.	
\end{itemize}
\end{proposition}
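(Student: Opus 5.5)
The plan rests on one elementary identity. For all $\lambda,\mu\in\Omega$,
$$\left|\left\langle A^*\hat{k}_\lambda,\hat{k}_\mu\right\rangle\right|=\left|\overline{\left\langle A\hat{k}_\mu,\hat{k}_\lambda\right\rangle}\right|=\left|\left\langle A\hat{k}_\mu,\hat{k}_\lambda\right\rangle\right|,$$
so the two arguments of $\sigma_t$ in Definition \ref{definition1.2} are obtained from each other by swapping $\lambda$ and $\mu$.

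\emph{Item (i).} Write $a_{\lambda\mu}=|\langle A\hat{k}_\lambda,\hat{k}_\mu\rangle|^p$. The quantity inside the supremum defining $\|A\|_{{ber}_{\sigma_t}}$ is $a_{\lambda\mu}\,\sigma_t\,a_{\mu\lambda}$. For $A^*$ the first entry is $|\langle A^*\hat{k}_\lambda,\hat{k}_\mu\rangle|^p=a_{\mu\lambda}$ and the second is $|\langle A\hat{k}_\lambda,\hat{k}_\mu\rangle|^p=a_{\lambda\mu}$. So the term for $A^*$ at $(\lambda,\mu)$ is $a_{\mu\lambda}\,\sigma_t\,a_{\lambda\mu}$, which is exactly the term for $A$ at $(\mu,\lambda)$. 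Since the supremum runs over all of $\Omega\times\Omega$, the two suprema coincide. No property of $\sigma_t$ is used here.

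\emph{Item (iii).} This follows from $|\langle \lambda A\hat{k}_\cdot,\hat{k}_\cdot\rangle|^p=|\lambda|^p a_{\cdot\cdot}$, and likewise for $(\lambda A)^*=\bar\lambda A^*$. Homogeneity (iv) of the mean $\sigma_t$ then pulls out $|\lambda|^p$, and taking $p$-th roots gives the claim; the case $\lambda=0$ is trivial.

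\emph{Item (iv).} I would first prove the reflection identity $a\,\sigma_t\,b=b\,\sigma_{1-t}\,a$. Set $f(t)=a\sigma_t b$ and $g(t)=b\sigma_{1-t}a$. By axiom (i), $f(0)=g(0)=b$ and $f(1)=g(1)=a$. Using axiom (ii) together with the symmetry of $\sigma$, both satisfy the midpoint rule $h(t)\,\sigma\,h(s)=h\!\left(\tfrac{t+s}{2}\right)$; for $g$ this reads $g(t)\sigma g(s)=b\sigma_{1-\frac{t+s}{2}}a$. Induction on the dyadic level then gives $f=g$ on dyadic rationals in $[0,1]$. Passing to all $t$ is the main obstacle, since the axioms listed do not state continuity of $t\mapsto a\sigma_t b$. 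I would first try to extract enough monotonicity in $t$ from axioms (ii) and (iii) to squeeze $f$ against $g$. Failing that, I would invoke the standard convention (satisfied by $\nabla_t$, $\#_t$, $!_t$) that interpolation paths are continuous in $t$. With the identity in hand, the term for $\sigma_{1-t}$ at $(\lambda,\mu)$ is $a_{\lambda\mu}\sigma_{1-t}a_{\mu\lambda}=a_{\mu\lambda}\sigma_t a_{\lambda\mu}$, which is the $\sigma_t$ term at $(\mu,\lambda)$, and the suprema agree as in (i).

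\emph{Item (ii).} "If" is immediate, since $0\,\sigma_t\,0=0$ by homogeneity. For "only if", vanishing of the norm forces $a_{\lambda\mu}\sigma_t a_{\mu\lambda}=0$ for all $\lambda,\mu$. Because $\sigma_t$ is itself a mean, $\min(a,b)\le a\sigma_t b$, so for each pair at least one of $a_{\lambda\mu},a_{\mu\lambda}$ vanishes.
\begin{itemize}
\item If $0\,\sigma_t\,b>0$ for $b>0$ (as for $\nabla_t$ with $0<t<1$), then both vanish.
\item If $t\in\{0,1\}$, one whole family vanishes.
\end{itemize}
In either case $\langle Ak_\lambda,k_\mu\rangle=0$ for all $\lambda,\mu$, and density of the span of the kernels gives $A=0$.

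For means such as the geometric mean, where $0\,\sigma_t\,b=0$, this pointwise argument does not suffice. I would then use the structure of the kernel function $F(\lambda,\mu)=\langle Ak_\lambda,k_\mu\rangle$. Since $F(\lambda,\mu)F(\mu,\lambda)\equiv0$, in holomorphic settings (for instance the Hardy and Fock spaces treated later) $F$ is real-analytic, so one factor vanishes identically, and either alternative yields $A=0$. Handling this case in full generality is where I expect the argument may need an extra hypothesis on the space or on the mean.
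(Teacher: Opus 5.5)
Your arguments for (i) and (iii) are correct, and they are exactly what the paper means by ``immediately follow from the definition''; no proof is written there. Deriving (i) from the relabelling $(\lambda,\mu)\mapsto(\mu,\lambda)$ is the right way to see it, since $\sigma_t$ is not symmetric.

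For (iv), the paper tacitly uses the reflection identity $a\,\sigma_t\,b=b\,\sigma_{1-t}\,a$, and your dyadic induction is the standard derivation of it. Passing beyond dyadic $t$ by invoking continuity in $t$ uses an assumption that is not among the listed axioms. It is, however, standard for interpolation paths, so you are on a par with the paper. For the familiar paths nothing extra is needed:
\begin{itemize}
\item Each $\sigma_t$ is a mean, so $t\mapsto a\,\sigma_t\,b$ is bounded.
\item For $\sigma=\nabla$ it is therefore a bounded solution of Jensen's equation, hence affine, which forces $\sigma_t=\nabla_t$.
\item Taking logarithms, respectively reciprocals, gives the same conclusion for the geometric and harmonic means when $a,b>0$.
\end{itemize}

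In (ii), make explicit why the one-sided hypothesis $0\,\sigma_t\,b>0$ suffices. If $a_{\lambda\mu}>0=a_{\mu\lambda}$, the term at $(\mu,\lambda)$ is $0\,\sigma_t\,a_{\lambda\mu}>0$. So your argument proves (ii) whenever $0\,\sigma_t\,1>0$ or $1\,\sigma_t\,0>0$, and this condition already covers $t\in\{0,1\}$.

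Your doubt about the remaining case is justified: there (ii) is false as stated, so no further idea can close it. Here is a counterexample.
\begin{itemize}
\item Take $\sigma_t=\#_t$, with $a\,\#_t\,b=a^tb^{1-t}$ and $0<t<1$.
\item Regard $\mathcal H=\mathbb C^2$ as the RKHS of functions on $\Omega=\{1,2\}$, with $k_1=e_1$ and $k_2=e_2$. The paper itself treats $\mathbb C^3$ this way.
\item Let $A=\begin{pmatrix}0&1\\0&0\end{pmatrix}$. Then $|\langle A\hat k_\lambda,\hat k_\mu\rangle|=|A_{\mu\lambda}|$ and $|\langle A^*\hat k_\lambda,\hat k_\mu\rangle|=|A_{\lambda\mu}|$.
\item For every pair $(\lambda,\mu)$ at least one of these vanishes. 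Hence every term of the supremum is $0$, and $\|A\|_{ber_{\#_t}}=0$ although $A\neq 0$.
\end{itemize}
The same construction works for the harmonic path, and for any path with $0\,\sigma_t\,1=1\,\sigma_t\,0=0$.

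So on a general RKHS your case condition is exactly the right one, and beyond it some hypothesis on the space is unavoidable. Your real-analyticity argument supplies such a hypothesis and is correct: on holomorphic RKHS over a connected domain, e.g.\ $H^2(\beta)$ or $\mathcal F^2_\alpha(\mathbb C^n)$, the identity $F(\lambda,\mu)F(\mu,\lambda)\equiv 0$ forces $F\equiv 0$. The counterexample also refutes the paper's subsequent remark that $\|\cdot\|_{ber_{\sigma_t}}$ satisfies every norm axiom except the triangle inequality.
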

From this proposition, we can say that     $\|A\|_{{ber}_{\sigma_t}}$ defines a seminorm on $B(\mathcal{H})$ as it satisfies all norm axioms except triangle inequality. Also it is clear that it satisfies the inequality
\begin{equation}\label{inequality}
  	\textit{ber}(A) \leq \|A\|_{{ber}_{\sigma_t}} \leq \|A\|_{ber}.
\end{equation}
	   
Let $A \in {B({H})}$, we define  $\widetilde{c}(A):=\inf_{\lambda ,\mu \in \Omega}|\langle A\hat{k}_\lambda,\hat{k}_\mu\rangle |$. In the following theorem we obtain a lower bound for the $\sigma_t $-Berezin norm in terms of $\widetilde{c}(\cdot)$.
\begin{theorem}
    Let $A \in B\mathcal{(H)}$. Then for $p \geq 1$
    $$
    \max\left \{\widetilde{c}^p(A) ~ \sigma_{t} ~ \|A\|^p_{ber},\|A\|^p_{ber} ~ \sigma_{t} ~ \widetilde{c}^p(A)\right\}\leq \|A\|^p_{ber_{\sigma_{t}}}.
    $$
\end{theorem}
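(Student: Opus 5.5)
Fix $p\ge 1$ and $t\in[0,1]$. The plan is to compare the two arguments of $\sigma_t$ pointwise in $(\lambda,\mu)$ and then use the monotonicity of $\sigma_t$ in each variable (property (iii) of an interpolation path). The key observation is how the adjoint term relates to the Berezin norm. We have
$$\left|\left\langle A^*\hat{k}_\lambda,\hat{k}_\mu\right\rangle\right| = \left|\left\langle A\hat{k}_\mu,\hat{k}_\lambda\right\rangle\right|.$$
So as $(\lambda,\mu)$ ranges over $\Omega\times\Omega$, the adjoint term runs over the same set of values $|\langle A\hat{k}_{\lambda'},\hat{k}_{\mu'}\rangle|$ as the first term, with the roles of $\lambda$ and $\mu$ swapped. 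In particular, for every $\lambda,\mu$,
$$\widetilde{c}(A)\le \left|\left\langle A^*\hat{k}_\lambda,\hat{k}_\mu\right\rangle\right|.$$

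For the first quantity $\widetilde{c}^p(A)\,\sigma_t\,\|A\|_{ber}^p$, choose sequences $(\lambda_n,\mu_n)$ such that $|\langle A^*\hat{k}_{\lambda_n},\hat{k}_{\mu_n}\rangle|\to \|A\|_{ber}$. This is possible by the swap identity above, since $\sup_{\lambda,\mu}|\langle A^*\hat{k}_\lambda,\hat{k}_\mu\rangle| = \|A\|_{ber}$. For each $n$ we have $|\langle A\hat{k}_{\lambda_n},\hat{k}_{\mu_n}\rangle|^p\ge \widetilde{c}^p(A)$. Monotonicity of $\sigma_t$ in its first argument then gives
$$\widetilde{c}^p(A)\,\sigma_t\,\left|\left\langle A^*\hat{k}_{\lambda_n},\hat{k}_{\mu_n}\right\rangle\right|^p \le \left|\left\langle A\hat{k}_{\lambda_n},\hat{k}_{\mu_n}\right\rangle\right|^p\,\sigma_t\,\left|\left\langle A^*\hat{k}_{\lambda_n},\hat{k}_{\mu_n}\right\rangle\right|^p \le \|A\|^p_{ber_{\sigma_t}}.$$
Now let $n\to\infty$.

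The second quantity $\|A\|_{ber}^p\,\sigma_t\,\widetilde{c}^p(A)$ is handled symmetrically. Pick $(\lambda_n,\mu_n)$ with $|\langle A\hat{k}_{\lambda_n},\hat{k}_{\mu_n}\rangle|\to\|A\|_{ber}$, bound the adjoint term below by $\widetilde{c}(A)$, and apply monotonicity in the second argument. Taking the maximum of the two bounds gives the claim. Note that we do not raise $\|A\|^p_{ber_{\sigma_t}}$ to the power $1/p$ anywhere; the inequality is stated for $p$-th powers, and the supremum of the $p$-th powers equals $\|A\|^p_{ber_{\sigma_t}}$ because $s\mapsto s^{1/p}$ is increasing.

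The one delicate point is passing to the limit inside $\sigma_t$. If the interpolation path is continuous (as the means in this paper are assumed continuous, property (v)), the limit passes through directly. If continuity of $\sigma_t$ were in doubt, I would avoid limits altogether: use homogeneity to write $(a\,\sigma_t\,b)$ with $b\ge (1-\varepsilon)\|A\|_{ber}^p$, and bound below via monotonicity by $\widetilde{c}^p(A)\,\sigma_t\,(1-\varepsilon)\|A\|^p_{ber}$. This leaves only the continuity of $\varepsilon\mapsto x\,\sigma_t\,(1-\varepsilon)y$ at $0$, which I would take from the standing continuity assumption on means.
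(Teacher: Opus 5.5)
Your argument is correct and is essentially the paper's: bound one entry of $\sigma_t$ below by $\widetilde{c}^p(A)$, use monotonicity, and take the supremum in the other entry. You also make explicit the two facts the paper uses silently, namely $\sup_{\lambda,\mu}|\langle A^*\hat{k}_\lambda,\hat{k}_\mu\rangle|=\|A\|_{ber}$ and the passage of the supremum through $\sigma_t$ (the paper even writes $\|A\|_{ber}$ for $\|A\|_{ber}^p$ at that step); your fallback still relies on continuity as written, but monotonicity and homogeneity alone would suffice, since $x\,\sigma_t\,(1-\varepsilon)y\ge \bigl((1-\varepsilon)x\bigr)\,\sigma_t\,\bigl((1-\varepsilon)y\bigr)=(1-\varepsilon)(x\,\sigma_t\,y)$.
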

\begin{proof}
    We have 
\begin{equation*}
 \begin{split}
    \|A\|^p_{ber_{\sigma_t}}&=\sup_{\lambda,\mu \in\Omega}\left\{|\langle A\hat{k}_\lambda,\hat{k}_\mu\rangle |^p ~ \sigma_t ~ |\langle A^*\hat{k}_\lambda,\hat{k}_\mu\rangle |^p\right\}\\
    &\geq \widetilde{c}^p(A) ~ \sigma_{t} ~ |\langle A^*\hat{k}_\lambda,\hat{k}_\mu\rangle |^p.
\end{split}		
\end{equation*}
    Taking the supremum over all $\lambda,\mu \in \Omega, $ we get
\begin{equation}\label{3.4}
   	\|A\|^p_{ber_{\sigma_t}}\geq  \widetilde{c}^p(A) ~ \sigma_{t} ~ \|A\|_{ber}.
\end{equation}
    In a similar way, we can have
  	\begin{equation}\label{3.3}
    \|A\|^p_{ber_{\sigma_t}}\geq  \|A\|_{ber} ~ \sigma_{t} ~ \widetilde{c}^p(A).
\end{equation}
    By combining $(\ref{3.4})$ and $(\ref{3.3}),$ we obtain the desired inequality.
\end{proof}
Next result gives a complete characterization of the equality case for the Berezin norm.
\begin{theorem}
	Let $A\in B(\mathcal{H})$. Then the following two statements are equivalent:
\begin{itemize}
	\item[$(i)$]$\|A\|_{{ber}_{\sigma_t}} =\|A\|_{ber}$.
	\item[$(ii)$] There exist sequences $\{\lambda_n\}$ and $\{\mu_n\}$ in $\Omega$ such that 
	$$\lim_{n\rightarrow \infty}|\langle A\hat{k}_{\lambda_{n}},\hat{k}_{\mu	_{n}}\rangle|=\lim_{n\rightarrow \infty}|\langle A^*\hat{k}_{\lambda_{n}},\hat{k}_{\mu	_{n}}\rangle |=\|A\|_{ber}. $$ 
\end{itemize} 
\end{theorem}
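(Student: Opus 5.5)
The plan is to prove the two implications separately. Two facts will be used throughout. First, $\|A^*\|_{ber}=\|A\|_{ber}$, since $\langle A^*\hat{k}_\lambda,\hat{k}_\mu\rangle=\overline{\langle A\hat{k}_\mu,\hat{k}_\lambda\rangle}$. Second, each $\sigma_t$ is a mean, so it is continuous and satisfies $a\,\sigma_t\, a=a$ by property (ii) of means.

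\emph{(ii)$\Rightarrow$(i).} Put $a_n=|\langle A\hat{k}_{\lambda_n},\hat{k}_{\mu_n}\rangle|$ and $b_n=|\langle A^*\hat{k}_{\lambda_n},\hat{k}_{\mu_n}\rangle|$. By hypothesis both tend to $M:=\|A\|_{ber}$. By continuity of $\sigma_t$,
$$\left(a_n^p\,\sigma_t\, b_n^p\right)^{1/p}\longrightarrow \left(M^p\,\sigma_t\, M^p\right)^{1/p}=M.$$
Hence $\|A\|_{ber_{\sigma_t}}\ge \|A\|_{ber}$. Together with the upper bound in \eqref{inequality}, this gives equality.

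\emph{(i)$\Rightarrow$(ii).} Using the definition of the supremum, choose $\lambda_n,\mu_n$ with $(a_n^p\,\sigma_t\, b_n^p)^{1/p}\to M$. Here $0\le a_n,b_n\le M$ by the remark above. Passing to a subsequence by compactness of $[0,M]$, we may assume $a_n\to a$ and $b_n\to b$. Continuity then gives $a^p\,\sigma_t\, b^p=M^p$. It remains to show $a=b=M$. If, say, $a<M$, then monotonicity gives $a^p\,\sigma_t\, b^p\le a^p\,\sigma_t\, M^p$. To reach a contradiction we need the strict inequality $a^p\,\sigma_t\, M^p<M^p\,\sigma_t\, M^p=M^p$. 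The case $b<M$ is symmetric. Once both equal $M$, the subsequence furnishes the sequences in (ii).

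The main obstacle is exactly this strictness step. The axioms listed only give that $\sigma_t$ is (weakly) increasing, and at the endpoints the claim genuinely fails. For $t=0$ we have $x\,\sigma_0\, y=y$, so $\|A\|_{ber_{\sigma_0}}=\|A^*\|_{ber}=\|A\|_{ber}$ for every $A$, while (ii) need not hold. So the argument will require $t\in(0,1)$ and strict monotonicity of $\sigma_t$ in each variable on $(0,\infty)$. This holds for the arithmetic, geometric and harmonic interpolation paths. I would first try to derive it from the interpolation identity $(a\,\sigma_t\, b)\,\sigma\,(a\,\sigma_s\, b)=a\,\sigma_{\frac{t+s}{2}}\, b$ combined with $a\,\sigma_1\, b=a$ and $a\,\sigma_0\, b=b$. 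Failing that, I would state it explicitly as a standing assumption on $\sigma_t$ and $t$.
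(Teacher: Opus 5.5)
Your (ii)$\Rightarrow$(i) is the paper's argument. Continuity of $\sigma_t$ and $a\,\sigma_t\,a=a$ give $\|A\|_{ber_{\sigma_t}}\geq\|A\|_{ber}$, and the reverse inequality is the standing bound $\|A\|_{ber_{\sigma_t}}\leq\|A\|_{ber}$.

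For (i)$\Rightarrow$(ii) you take a genuinely different route, and it is the one that works. The paper chooses sequences for the given $t$ and then reads off the conclusion ``for $t=0$'' and ``for $t=1$''. This is not legitimate: $t$ is fixed, the sequences depend on $t$, and at $t\in\{0,1\}$ hypothesis (i) carries no information. Your compactness-plus-strict-monotonicity argument correctly proves the implication under the hypotheses you name. Your endpoint objection also shows the statement is false as written. Regard $\mathbb{C}^2$ as an RKHS over $\Omega=\{1,2\}$ with $k_j=e_j$, and take $A=\begin{pmatrix}0&1\\0&0\end{pmatrix}$. Then $\|A\|_{ber}=1$, but for every pair $(\lambda,\mu)$ at most one of $|\langle A\hat{k}_\lambda,\hat{k}_\mu\rangle|$ and $|\langle A^*\hat{k}_\lambda,\hat{k}_\mu\rangle|$ is nonzero. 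So (ii) fails while (i) holds for $t=0,1$.

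Do not spend effort trying to derive strict monotonicity from the interpolation identity; it does not follow from the axioms as listed. The symmetric mean $a\,\sigma\,b=\max\{a,b\}$ satisfies (i)--(v). Set $a\,\sigma_t\,b=\max\{a,b\}$ for $0<t<1$, $a\,\sigma_0\,b=b$, and $a\,\sigma_1\,b=a$. This satisfies all three interpolation-path conditions; check $(a\,\sigma_t\,b)\,\sigma\,(a\,\sigma_s\,b)$ according to whether $t$ and $s$ are endpoints. For this path $\|A\|_{ber_{\sigma_t}}=\|A\|_{ber}$ for every operator and every $t$, so the same $A$ defeats (i)$\Rightarrow$(ii) at interior $t$ too.

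Hence $t\in(0,1)$ and strict monotonicity of $\sigma_t$ in each variable must be added as explicit hypotheses, exactly as your fallback proposes. These hold for the arithmetic, geometric and harmonic paths. With them your proof is complete. Strictness on $(0,\infty)$ also covers a limit $a=0$, by comparing with $\varepsilon^p\,\sigma_t\,M^p$ for small $\varepsilon>0$, and the case $M=0$ is trivial.
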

\begin{proof}
	First, we prove $ (i) \implies (ii).$ Suppose $\|A\|_{{ber}_{\sigma_t}} =\|A\|_{ber}$ holds. Since $A$ is bounded, there exist sequences of normalised kernels $\{\hat{k}_{\lambda_{n}}\}$ and $\{\hat{k}_{\mu_{n}}\}$ in $\mathcal{H}$ such that 
	$$ \lim_{n\rightarrow \infty}(|\langle A\hat{k}_{\lambda_{n}},\hat{k}_{\mu	_{n}}\rangle |^{p}\sigma_t|\langle A^*\hat{k}_{\lambda_{n}},\hat{k}_{\mu	_{n}}\rangle |^{p})=\|A\|^{p}_{{ber}_{\sigma_t}}.$$
	Therefore, we have 
\begin{equation*}
\begin{split}
	\|A\|^{p}_{ber} &=\|A\|^{p}_{{ber}_{\sigma_t}}=\lim_{n\rightarrow \infty}(|\langle A\hat{k}_{\lambda_{n}},\hat{k}_{\mu	_{n}}\rangle |^{p}\sigma_t|\langle A^*\hat{k}_{\lambda_{n}},\hat{k}_{\mu_{n}}\rangle |^{p}).
\end{split}
\end{equation*}
     For $t=0$, we have $\|A\|_{ber}=\lim_{n\rightarrow \infty}(|\langle A^*\hat{k}_{\lambda_{n}},\hat{k}_{\mu	_{n}}\rangle | $ and for $t=1$, we have $\|A\|_{ber}=\lim_{n\rightarrow \infty}(|\langle A\hat{k}_{\lambda_{n}},\hat{k}_{\mu	_{n}}\rangle|.$
 	Therefore,  $\lim_{n\rightarrow \infty}|\langle A\hat{k}_{\lambda_{n}},\hat{k}_{\mu	_{n}}\rangle|=\lim_{n\rightarrow \infty}|\langle A^*\hat{k}_{\lambda_{n}},\hat{k}_{\mu_{n}}\rangle |=\|A\|_{ber}$.
 	
	Now to prove $(ii) \implies (i)$, assume $(ii)$ holds. We have 
\begin{equation*}
\begin{split}
	\|A\|^{p}_{{ber}_{\sigma_t}}&=\sup_{\lambda,\mu\in \Omega}\left\lbrace|\langle A\hat{k}_\lambda,\hat{k}_\mu\rangle|^p~\sigma_t ~ |\langle A^*\hat{k}_\lambda,\hat{k}_\mu\rangle|^p\right\rbrace \\
	&\geq \lim_{n\rightarrow \infty}|\langle A\hat{k}_{\lambda_{n}},\hat{k}_{\mu	_{n}}\rangle|^{p}\sigma_t\lim_{n\rightarrow \infty}|\langle A^*\hat{k}_{\lambda_{n}},\hat{k}_{\mu_{n}}\rangle |^{p}\\
	&=\|A\|^{p}_{ber}.
\end{split}
\end{equation*}
This completes the proof.
\end{proof}
	
    The following results establish an upper bound for the $\sigma_t$-Berezin norm.
\begin{theorem}
	Let $A\in B(\mathcal{H})$. If $\sigma_t\leq \nabla_t$, then for $p\geq 2$
\begin{itemize}
	\item[(i)]$
	\|A\|_{{ber}_{\sigma_t}}^{p}
	\le
	\textit{ber}\!\left(
	t\,|A^{*}|^{p} + (1-t)\,|A|^{p}
	\right).
	$
	\item[(ii)]$
	\textit{ber}^p(A)\leq \min_{t \in[0,1]}\textit{ber}\left(t|A|^p +(1-t)|A^*|^p\right) .$
\end{itemize}
\end{theorem}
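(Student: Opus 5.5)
The plan is to bound the quantity inside the supremum pointwise by a Berezin transform evaluated at a \emph{single} point, and then take the supremum.

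\emph{Pointwise bound.} Fix $\lambda,\mu\in\Omega$. The hypothesis $\sigma_t\le\nabla_t$ gives
$$|\langle A\hat{k}_\lambda,\hat{k}_\mu\rangle|^p\,\sigma_t\,|\langle A^*\hat{k}_\lambda,\hat{k}_\mu\rangle|^p\le t|\langle A\hat{k}_\lambda,\hat{k}_\mu\rangle|^p+(1-t)|\langle A^*\hat{k}_\lambda,\hat{k}_\mu\rangle|^p .$$
Each term is then controlled by Cauchy--Schwarz, which removes $\hat{k}_\mu$. For the first term,
$$|\langle A\hat{k}_\lambda,\hat{k}_\mu\rangle|^p\le\|A\hat{k}_\lambda\|^p=\langle |A|^2\hat{k}_\lambda,\hat{k}_\lambda\rangle^{p/2}.$$
Since $p/2\ge 1$, Lemma \eqref{2.6} applied to the positive operator $|A|^2$ with exponent $p/2$ gives
$$\langle |A|^2\hat{k}_\lambda,\hat{k}_\lambda\rangle^{p/2}\le\langle |A|^p\hat{k}_\lambda,\hat{k}_\lambda\rangle .$$
This is exactly where $p\ge2$ is needed. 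In the same way, $|\langle A^*\hat{k}_\lambda,\hat{k}_\mu\rangle|^p\le\langle |A^*|^p\hat{k}_\lambda,\hat{k}_\lambda\rangle$.

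\emph{Taking the supremum.} Both bounds involve only the one point $\lambda$, so the right-hand side equals the Berezin transform $\widetilde{T}(\lambda)$ of the positive operator $T=t|A|^p+(1-t)|A^*|^p$. Hence it is at most $\textit{ber}(T)$. Taking the supremum over $\lambda,\mu$ yields
$$\|A\|^p_{ber_{\sigma_t}}\le \textit{ber}\big(t|A|^p+(1-t)|A^*|^p\big).$$
This has the roles of $|A|$ and $|A^*|$ interchanged relative to (i). To match (i), I apply the same inequality to $A^*$ in place of $A$, using $|(A^*)^*|=|A|$. By part (i) of the Proposition, $\|A^*\|_{ber_{\sigma_t}}=\|A\|_{ber_{\sigma_t}}$, so
$$\|A\|^p_{ber_{\sigma_t}}=\|A^*\|^p_{ber_{\sigma_t}}\le\textit{ber}\big(t|A^*|^p+(1-t)|A|^p\big),$$
which is (i). Alternatively, part (iv) of the Proposition, $\|A\|_{ber_{\sigma_t}}=\|A\|_{ber_{\sigma_{1-t}}}$, would do the job, but it needs $\sigma_{1-t}\le\nabla_{1-t}$. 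I read the hypothesis as holding for every $t$, so either route works.

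\emph{Part (ii).} By \eqref{inequality}, $\textit{ber}(A)\le\|A\|_{ber_{\sigma_t}}$, so $\textit{ber}^p(A)\le\textit{ber}\big(t|A|^p+(1-t)|A^*|^p\big)$ for every $t\in[0,1]$. This is the first form above, or equivalently (i) with $t$ replaced by $1-t$. Taking the minimum over $t$ gives (ii). The minimum is attained because $t\mapsto\textit{ber}(t|A|^p+(1-t)|A^*|^p)$ is convex, hence continuous on $[0,1]$: it is a supremum of affine functions of $t$.

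\emph{Main obstacle.} The delicate step is removing $\mu$. The Cauchy--Schwarz bound $\|A\hat{k}_\lambda\|$ is what keeps both terms at the same point $\lambda$. A mixed estimate such as \eqref{2.3} would instead pair $\lambda$ with $\mu$, and then the two terms could not be combined into a single Berezin transform.
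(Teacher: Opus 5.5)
Your proof is correct and is essentially the paper's argument: Cauchy--Schwarz to eliminate $\hat{k}_\mu$, inequality \eqref{2.6} with exponent $p/2$, and $\sigma_t\le\nabla_t$ (you apply this one first, so you never need the monotonicity of $\sigma_t$). Note also that the paper's chain actually ends at $\textit{ber}\bigl(t|A|^p+(1-t)|A^*|^p\bigr)$ and then simply asserts (i); your passage to $A^*$ via $\|A^*\|_{ber_{\sigma_t}}=\|A\|_{ber_{\sigma_t}}$ is exactly the step needed to get (i) as stated, and it uses the hypothesis only at the given $t$.

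One small correction in (ii): a convex function on the closed interval $[0,1]$ need not be continuous at the endpoints. However, $t\mapsto\textit{ber}\bigl(t|A|^p+(1-t)|A^*|^p\bigr)$ is a supremum of affine functions with slopes $\langle |A|^p\hat{k}_\lambda,\hat{k}_\lambda\rangle-\langle |A^*|^p\hat{k}_\lambda,\hat{k}_\lambda\rangle\in[-\|A\|^p,\|A\|^p]$. It is therefore Lipschitz with constant $\|A\|^p$, so the minimum is indeed attained.
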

\begin{proof}
	We have
\begin{equation*}
\begin{split}
	\|A\|^p_{ber_{\sigma_t}}
	& =  \sup_{\lambda,\mu\in\Omega}\left\lbrace |\langle A\hat{k}_\lambda,\hat{k}_\mu\rangle |^p~ \sigma_{t}~ |\langle A^*\hat{k}_\lambda,\hat{k}_\mu\rangle |^p\right\rbrace\\
	& \leq \sup_{\lambda\in\Omega}\left\lbrace \|A\hat{k}_\lambda\|^p~ \sigma_{t}~ \|A^*\hat{k}_\lambda\|^p\right\rbrace\\
	&=\sup_{\lambda\in\Omega}\left\lbrace \left\langle|A|^2\hat{k}_\lambda,\hat{k}_\lambda\right\rangle^\frac{p}{2}~ \sigma_{t}~\left\langle|A^*|^2\hat{k}_\lambda,\hat{k}_\lambda\right\rangle^\frac{p}{2}\right\rbrace\\ 
	&\leq \sup_{\lambda\in\Omega}\left\lbrace \left\langle|A|^p\hat{k}_\lambda,\hat{k}_\lambda\right\rangle~ \sigma_{t}~\left\langle|A^*|^p\hat{k}_\lambda,\hat{k}_\lambda\right\rangle\right\rbrace \qquad ( by \eqref{2.6})\\
	&\leq \sup_{\lambda\in\Omega}\left\lbrace t\left\langle|A|^p\hat{k}_\lambda,\hat{k}_\lambda\right\rangle +(1-t)\left\langle|A^*|^p\hat{k}_\lambda,\hat{k}_\lambda\right\rangle\right\rbrace\\
	&= \sup_{\lambda\in\Omega}\left\lbrace \left\langle \left(t |A|^p +(1-t)|A^*|^p
\right)\hat{k}_\lambda,\hat{k}_\lambda\right\rangle\right\rbrace\\  
	&=\textit{ber}\left(t|A|^p +(1-t)|A^*|^p\right),    
\end{split}
\end{equation*}
	Hence, the first inequality follows. The second inequality follows directly from \eqref{inequality}.
\end{proof}
 
\begin{remark}
	For $t=\frac{1}{2}$, we obtain
	$$
	\textit{ber}^{p}(A)
	\le
	\frac{1}{2}\textit{ber}\left(|A|^{p}+|A^*|^{p}\right),
	$$
	which is precisely the inequality stated in Corollary~\ref{bound}\,(i). Thus we have 
\begin{equation}\label{3.2}
	\textit{ber}^p(A)\leq \min_{t \in[0,1]}\textit{ber}\left(t|A|^p +(1-t)|A^*|^p\right) \leq \frac{1}{2}\textit{ber}\left(|A|^{p}+|A^*|^{p}\right).
\end{equation}
	To show the second inequality in (\ref{3.2}) is proper, we consider the finite-dimensional Hilbert space $\mathbb{C}^3$ and the operator
	$$
	A=\begin{pmatrix}
	0 & 2 & 2\\
	0 & 0 & 3\\
	0 & 0 & 0
	\end{pmatrix}.
	$$
	Then for $p=4, \min_{t \in[0,1]}\textit{ber}(t|A|^4+(1-t)|A^*|^4)=\frac{481}{6} \text{ and } \frac{1}{2}\textit{ber}(|A|^4+|A^*|^4)=\frac{185}{2}$. Therefore, for the matrix $A$
	$$
	\min_{t \in[0,1]}\textit{ber}\left(t|A|^p +(1-t)|A^*|^p\right) < \frac{1}{2}\textit{ber}\left(|A|^{p}+|A^*|^{p}\right).
	$$
	Hence, it is a stronger upper bound for the Berezin radius than the bound in  Corollary~\ref{bound}\,(i) . 
\end{remark}
\begin{theorem}
	If $A,B \in B(\mathcal{H}), $ with $|A|B=B^{*}|A|$, and let $f$ and $g$ be two nonnegative continuous functions defined on $[0,\infty )$ such that $f(t)g(t)=t$ for every $t \geq 0$. If $\sigma_t \leq \nabla_t $, then for   $p \geq 1$ 
	$$
	\|AB\|^p_{ber_{\sigma_t}} \leq  r^{\frac{p}{2}}(B)
	\textit{ber}^{\frac{1}{2}}\left( tf^2(|A|)^p+(1-t)|g^2(|A^*|)^p\right)
	\textit{ber}^{\frac{1}{2}}\left(tg^2(|A^*|)^p+(1-t)f^2(|A|)^p\right).
	$$
\end{theorem}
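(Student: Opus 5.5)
Fix $\lambda,\mu\in\Omega$ and bound the two quantities $|\langle AB\hat k_\lambda,\hat k_\mu\rangle|$ and $|\langle (AB)^*\hat k_\lambda,\hat k_\mu\rangle|$ separately, using Lemma \eqref{2.5}. For the first, \eqref{2.5} with $x=\hat k_\lambda$, $y=\hat k_\mu$ gives
\[
|\langle AB\hat k_\lambda,\hat k_\mu\rangle|^2\le r(B)\,\langle f^2(|A|)\hat k_\lambda,\hat k_\lambda\rangle\,\langle g^2(|A^*|)\hat k_\mu,\hat k_\mu\rangle .
\]
For the second, note that $|\langle (AB)^*\hat k_\lambda,\hat k_\mu\rangle|=|\langle AB\hat k_\mu,\hat k_\lambda\rangle|$. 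Applying \eqref{2.5} with the roles of the kernels swapped yields
\[
|\langle (AB)^*\hat k_\lambda,\hat k_\mu\rangle|^2\le r(B)\,\langle f^2(|A|)\hat k_\mu,\hat k_\mu\rangle\,\langle g^2(|A^*|)\hat k_\lambda,\hat k_\lambda\rangle .
\]
Raising both bounds to the power $p/2$ and using \eqref{2.6} (the operators $f^2(|A|)$, $g^2(|A^*|)$ are positive and $p\ge1$) replaces $\langle T\hat k,\hat k\rangle^{p/2}$ by at most $\langle T^p\hat k,\hat k\rangle^{1/2}$. This is the reason for the exponent convention $f^2(|A|)^p$ in the statement.

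Next use $\sigma_t\le\nabla_t$ together with monotonicity of $\sigma_t$:
\[
|\langle AB\hat k_\lambda,\hat k_\mu\rangle|^p\,\sigma_t\,|\langle (AB)^*\hat k_\lambda,\hat k_\mu\rangle|^p\le r^{p/2}(B)\bigl(t\,a_\lambda^{1/2}b_\mu^{1/2}+(1-t)\,a_\mu^{1/2}b_\lambda^{1/2}\bigr),
\]
where $a_\nu=\langle f^2(|A|)^p\hat k_\nu,\hat k_\nu\rangle$ and $b_\nu=\langle g^2(|A^*|)^p\hat k_\nu,\hat k_\nu\rangle$. Writing $t=\sqrt t\sqrt t$ and $1-t=\sqrt{1-t}\sqrt{1-t}$, the Cauchy–Schwarz inequality in $\mathbb R^2$ gives
\[
t\sqrt{a_\lambda b_\mu}+(1-t)\sqrt{a_\mu b_\lambda}\le\bigl(t a_\lambda+(1-t)b_\lambda\bigr)^{1/2}\bigl(t b_\mu+(1-t)a_\mu\bigr)^{1/2}.
\]
This pairing puts all $\lambda$-terms in one factor and all $\mu$-terms in the other.

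The first factor is $\langle (tf^2(|A|)^p+(1-t)g^2(|A^*|)^p)\hat k_\lambda,\hat k_\lambda\rangle^{1/2}$. This is at most $\textit{ber}^{1/2}$ of that operator. The second factor is similarly at most $\textit{ber}^{1/2}(tg^2(|A^*|)^p+(1-t)f^2(|A|)^p)$. Taking the supremum over $\lambda,\mu$ then gives the claim.

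The main obstacle is arranging the Cauchy–Schwarz pairing so that each factor depends on a single kernel point. This is what allows the supremum to split into a product of two Berezin radii. A secondary point is checking that \eqref{2.6} is used in the correct direction with exponent $p\ge1$.
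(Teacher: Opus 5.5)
Your proof is correct and follows essentially the same route as the paper. You apply \eqref{2.5} to $AB$ and, with the kernels swapped, to $(AB)^*$; then use monotonicity of $\sigma_t$ with $\sigma_t\le\nabla_t$, the same Cauchy--Schwarz pairing that separates the $\lambda$- and $\mu$-terms, \eqref{2.6}, and finally $\textit{ber}$. The only difference is that you invoke \eqref{2.6} before the mean step rather than after the Cauchy--Schwarz step, which makes explicit the inequality that the paper records there as an equality.
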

\begin{proof}
	Consider
\begin{equation*}
\begin{split}
	&|\langle AB\hat{k}_\lambda,\hat{k}_\mu\rangle|^p~\sigma_t~|\langle (AB)^*\hat{k}_\lambda,\hat{k}_\mu\rangle|^p\\
	&\leq \left( r^{\frac{1}{2}}(B)\langle f^2(|A|)\hat{k}_\lambda,\hat{k}_\lambda \rangle^{\frac{1}{2}} 
	\langle g^2(|A^*|)\hat{k}_\mu,\hat{k}_\mu \rangle^{\frac{1}{2}} \right)^{p} \\
	&\qquad\qquad\sigma_t  
	\left( r^{\frac{1}{2}}(B)\langle g^2(|A^*|)\hat{k}_\lambda,\hat{k}_\lambda \rangle^{\frac{1}{2}} 
	\langle f^2(|A|)\hat{k}_\mu,\hat{k}_\mu \rangle^{\frac{1}{2}} \right)^{p}  ( by \eqref{2.5} )\\		
	&= r^{\frac{p}{2}}(B)\left( \langle f^2(|A|)\hat{k}_\lambda,\hat{k}_\lambda \rangle^{\frac{p}{2}} \langle g^2(|A^*|)\hat{k}_\mu,\hat{k}_\mu \rangle^{\frac{p}{2}} 
	~ \sigma_t ~ \langle g^2(|A^*|)\hat{k}_\lambda,\hat{k}_\lambda \rangle^{\frac{p}{2}} 
	\langle f^2(|A|)\hat{k}_\mu,\hat{k}_\mu \rangle^{\frac{p}{2}} \right) \\
	&\leq r^{\frac{p}{2}}(B)\left(t\langle f^2(|A|)\hat{k}_\lambda,\hat{k}_\lambda \rangle^{\frac{p}{2}} 
	\langle g^2(|A^*|)\hat{k}_\mu,\hat{k}_\mu \rangle^{\frac{p}{2}} 
	+(1-t)\langle g^2(|A^*|)\hat{k}_\lambda,\hat{k}_\lambda \rangle^{\frac{p}{2}} 
	\langle f^2(|A|)\hat{k}_\mu,\hat{k}_\mu \rangle^{\frac{p}{2}} 
	\right) \\
    &\leq r^{\frac{p}{2}}(B)
	\left\{
	t\langle f^2(|A|)\hat{k}_\lambda,\hat{k}_\lambda \rangle^{p}
	+(1-t)\langle g^2(|A^*|)\hat{k}_\lambda,\hat{k}_\lambda \rangle^{p}
	\right\}^{\frac{1}{2}}\\
	&\qquad\qquad\times\left\{
	t\langle g^2(|A^*|)\hat{k}_\mu,\hat{k}_\mu \rangle^{p}				+(1-t)\langle f^2(|A|)\hat{k}_\mu,\hat{k}_\mu \rangle^{p}
	\right\}^{\frac{1}{2}} \\
	&\qquad\text{(by the Cauchy--Schwarz inequality)} \\
	&= r^{\frac{p}{2}}(B)
	\left\langle 
	(t f^{2p}(|A|)+(1-t)\psi^{2p}(|A^*|))\hat{k}_{\lambda},\hat{k}_{\lambda}
	\right\rangle^{\frac{1}{2}}
	\left\langle 
	(t g^{2p}(|A^*|)+(1-t)f^{2p}(|A|))\hat{k}_{\mu},\hat{k}_{\mu}
	\right\rangle^{\frac{1}{2}} \\
	&\leq r^{\frac{p}{2}}(B)
	\textit{ber}^{\frac{1}{2}}\left(t f^{2p}(|A|)+(1-t)|g^{2p}(|A^*|)\right)
	\textit{ber}^{\frac{1}{2}}\left(tg^{2p}(|A^*|)+(1-t)f^{2p}(|A|)\right).
\end{split}
\end{equation*}
	Taking the supremum over all $\lambda,\mu \in \Omega$, we obtain the desired  inequality.
\end{proof}
\begin{remark}
	If we set $B=I$, $f(t)= g(t)=\sqrt{t}$ and $t=\tfrac12$ in the general inequality obtained above, then we obtain the bound for the Berezin radius stated in Corollary~\ref{bound}\,(i). Hence, Corollary~\ref{bound}\,(i) appears as a special case of the preceding result.
\end{remark}
\begin{theorem}
	Let $A_i, B_i \in {B(\mathcal{H})} $, $i=1,2,...,n $ with $A_i$ have polar decomposition 
	$$
	A_i=\mathcal{U}_{i}|A_i|,
	$$
	and let $f,g : [0,\infty) \to[0,\infty) 
	$ be nonnegative continuous functions. If $\sigma_t \leq\nabla_t$, then for $p \geq 1$
	\begin{equation*}
    \begin{split}
    \left\|\sum_{i=1}^{n}g\left(|B_i|\right)f\left(|A_i^*|\right)\mathcal{U}_{i}\right\|_{ber_{\sigma_t}} &\leq \! \frac{n^{p-1}}{2}\! \sum_{i=1}^{n}\Big( \!\textit{ber}\left( t f^{2p}(|A_i|)
    +\!(1-t)g^{2p}(|B_i|)\right)\\
    &\qquad \qquad +\textit{ber}\left( t g^{2p}(|B_i|)\!+\!(1-t)f^{2p}(|A_i|)\right)\Big).
    \end{split}
	\end{equation*}
In particular, if $f$ and $g$ are nonnegative continuous functions on $[0,\infty)$ such that $f(t)g(t)=t,$ then 
\begin{equation*}
\begin{split}
	\left\|\sum_{i=1}^{n}A_i\right\|^p_{ber_{\sigma_t}} &\leq \frac{n^{p-1}}{2} \sum_{i=1}^{n} \Big(\textit{ber}\left( t f^{2p}(|A_i|)+(1-t)g^{2p}(|A_i^*|)\right)\\
	&\qquad \qquad +\textit{ber}\left( t g^{2p}(|A_i^*|)+(1-t)f^{2p}(|A_i|)\right)\Big) . 
\end{split}
\end{equation*}
\end{theorem}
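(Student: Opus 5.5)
Set $T=\sum_{i=1}^n g(|B_i|)f(|A_i^*|)\mathcal{U}_i$. The plan is to bound the quantity inside the supremum pointwise in $\lambda,\mu$, and only at the end take the supremum. The left side should be read with the $p$-th power, as in the particular case.

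\textbf{Step 1 (each inner product).} Fix $\lambda,\mu\in\Omega$. By the triangle inequality and \eqref{2.1} with $x=\hat{k}_\lambda$, $y=\hat{k}_\mu$,
$$|\langle T\hat{k}_\lambda,\hat{k}_\mu\rangle|\le \sum_{i=1}^n \langle f^2(|A_i|)\hat{k}_\lambda,\hat{k}_\lambda\rangle^{1/2}\langle g^2(|B_i|)\hat{k}_\mu,\hat{k}_\mu\rangle^{1/2}.$$
Since $\langle T^*\hat{k}_\lambda,\hat{k}_\mu\rangle=\overline{\langle T\hat{k}_\mu,\hat{k}_\lambda\rangle}$, the same estimate with $\lambda$ and $\mu$ interchanged controls $|\langle T^*\hat{k}_\lambda,\hat{k}_\mu\rangle|$.

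\textbf{Step 2 (powers).} Raise both bounds to the $p$-th power. Use \eqref{2.4} to obtain the factor $n^{p-1}$ and a sum of $p$-th powers. Then apply the AM--GM inequality $a^{1/2}b^{1/2}\le \frac{a+b}{2}$ to get
$$|\langle T\hat{k}_\lambda,\hat{k}_\mu\rangle|^p\le \frac{n^{p-1}}{2}\sum_{i=1}^n\left(\langle f^2(|A_i|)\hat{k}_\lambda,\hat{k}_\lambda\rangle^{p}+\langle g^2(|B_i|)\hat{k}_\mu,\hat{k}_\mu\rangle^{p}\right).$$
Next, \eqref{2.6} moves the exponent inside, giving $\langle f^{2p}(|A_i|)\hat{k}_\lambda,\hat{k}_\lambda\rangle$ and $\langle g^{2p}(|B_i|)\hat{k}_\mu,\hat{k}_\mu\rangle$. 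The analogous bound holds for the $T^*$ term with $\lambda$ and $\mu$ swapped.

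\textbf{Step 3 (combine and take the supremum).} Use monotonicity of $\sigma_t$ together with $\sigma_t\le\nabla_t$ to bound the mean by $t(\cdot)+(1-t)(\cdot)$. Regroup the terms by the point at which they are evaluated:
$$\frac{n^{p-1}}{2}\sum_i\Big(\langle (tf^{2p}(|A_i|)+(1-t)g^{2p}(|B_i|))\hat{k}_\lambda,\hat{k}_\lambda\rangle+\langle (tg^{2p}(|B_i|)+(1-t)f^{2p}(|A_i|))\hat{k}_\mu,\hat{k}_\mu\rangle\Big).$$
Each bracket is bounded by the corresponding $\textit{ber}$. Taking the supremum over $\lambda,\mu$ then gives the stated inequality.

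\textbf{Particular case.} Take $f(t)g(t)=t$ and $B_i=A_i^*$, so that $|B_i|=|A_i^*|$. Then $g(|A_i^*|)f(|A_i^*|)\mathcal{U}_i=|A_i^*|\mathcal{U}_i=\mathcal{U}_i|A_i|=A_i$, and the particular case follows.

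\textbf{Main obstacle.} The delicate point is the bookkeeping in Step 2: the $t$ and $1-t$ weights must land correctly on the $\lambda$-terms and the $\mu$-terms. The swap of roles between the $T$ and $T^*$ estimates is exactly what produces the two mixed operators $tf^{2p}+(1-t)g^{2p}$ and $tg^{2p}+(1-t)f^{2p}$. Each must be paired with a single evaluation point so that it is bounded by one Berezin radius.
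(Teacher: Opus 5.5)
Your proof is correct and follows the paper's argument essentially step for step. The steps are \eqref{2.1} on each summand, then \eqref{2.4}, AM--GM and \eqref{2.6}, then monotonicity of $\sigma_t$ with $\sigma_t\le\nabla_t$ (using the $\lambda\leftrightarrow\mu$ swap for the $T^*$ term), and finally $B_i=A_i^*$ with $|A_i^*|\mathcal{U}_i=\mathcal{U}_i|A_i|$ for the particular case. You apply AM--GM before \eqref{2.6} rather than after, which is immaterial.

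You are also right to read the general inequality with $\|\cdot\|^p_{ber_{\sigma_t}}$ on the left, since that is what the paper's proof establishes. Without the exponent the inequality fails: take $n=1$, $A_1=B_1=cI$ with $0<c<1$, $f(s)=g(s)=\sqrt{s}$, $p>1$, $\sigma_t=\nabla_t$; then the left side is $c$ and the right side is $c^p$.
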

\begin{proof}
	We have 
\begin{equation*}
\begin{split}
	&\left|\left\langle\sum_{i=1}^{n}g\left(|B_i|\right)f\left(|A_i^*|\right)\mathcal{U}_i\hat{k}_{\lambda},\hat{k}_{\mu}\right\rangle\right|^p =\left|\sum_{i=1}^{n}\left\langle g\left(|B_i|\right)f\left(|A_i^*|\right)\mathcal{U}_i\hat{k}_{\lambda},\hat{k}_{\mu}\right\rangle\right|^p\\
	&\qquad \qquad \leq \left(\sum_{i=1}^{n}\left|\left\langle   g\left(|B_i|\right)f\left(|A_i^*|\right)\mathcal{U}_i\hat{k}_{\lambda},\hat{k}_{\mu}\right\rangle\right|\right)^p\\
	&\qquad \qquad \leq \left(\sum_{i=1}^{n}\left\langle f^2(|A_i|)\hat{k}_{\lambda},\hat{k}_{\lambda}\right\rangle^{\frac{1}{2}}\left\langle g^2(|B_i|)\hat{k}_{\mu},\hat{k}_{\mu}\right\rangle^{\frac{1}{2}}\right)^p \qquad ( by \eqref{2.1})\\
	&\qquad \qquad \leq n^{p-1}\sum_{i=1}^{n}\left\langle f^{2p}(|A_i|)\hat{k}_{\lambda},\hat{k}_{\lambda}\right\rangle^{\frac{1}{2}}\left\langle g^{2p}(|B_i|)\hat{k}_{\mu},\hat{k}_{\mu}\right\rangle^{\frac{1}{2}} \quad ( by  \eqref{2.4})\\
	&\qquad \qquad \leq \frac{n^{p-1}}{2}\sum_{i=1}^{n}\left\langle f^{2p}(|A_i|)\hat{k}_{\lambda},\hat{k}_{\lambda}\right\rangle+\left\langle g^{2p}(|B_i|)\hat{k}_{\mu},\hat{k}_{\mu}\right\rangle.\\
\end{split}
\end{equation*}
	Now consider
\begin{equation*}
\begin{split}
    &\left|\left\langle\sum_{i=1}^{n}g\left(|B_i|\right)f\left(|A_i^*|\right)\mathcal{U}_i\hat{k}_{\lambda},\hat{k}_{\mu}\right\rangle\right|^p ~\sigma_t~	\left|\left\langle\sum_{i=1}^{n}\left(g\left(|B_i|\right)f\left(|A_i^*|\right)\mathcal{U}_i\right)^*\hat{k}_{\lambda},\hat{k}_{\mu}\right\rangle\right|^p\\
	&\qquad \qquad  \leq\frac{n^{p-1}}{2}\Bigg(\bigg[\sum_{i=1}^{n}\left(\left\langle f^{2p}(|A_i|)\hat{k}_{\lambda},\hat{k}_{\lambda}\right\rangle+\left\langle g^{2p}(|B_i|)\hat{k}_{\mu},\hat{k}_{\mu}\right\rangle\right)\bigg] \\
	&\qquad \qquad \qquad \qquad \qquad  ~\sigma_t~ \bigg[\sum_{i=1}^{n}\left(\left\langle f^{2p}(|A_i|)\hat{k}_{\mu},\hat{k}_{\mu}\right\rangle+\left\langle g^{2p}(|B_i|)\hat{k}_{\lambda},\hat{k}_{\lambda}\right\rangle\right)\bigg]\Bigg)\\
	&\qquad \qquad  \leq\frac{n^{p-1}}{2}\!\Bigg(t\bigg[\sum_{i=1}^{n}\left(\left\langle f^{2p}(|A_i|)\hat{k}_{\lambda},\hat{k}_{\lambda}\right\rangle+\left\langle g^{2p}(|B_i|)\hat{k}_{\mu},\hat{k}_{\mu}\right\rangle\right)\bigg] \!\\
	&\qquad \qquad \qquad \qquad  +\!(1-t) \bigg[\sum_{i=1}^{n}\left(\left\langle f^{2p}(|A_i|)\hat{k}_{\mu},\hat{k}_{\mu}\right\rangle+\left\langle g^{2p}(|B_i|)\hat{k}_{\lambda},\hat{k}_{\lambda}\right\rangle\right)\bigg]\Bigg)\\
	&\qquad\qquad  = \frac{n^{p-1}}{2} \!\sum_{i=1}^{n}\bigg(\left\langle t f^{2p}(|A_i|) +(1-t)g^{2p}(|B_i|)\hat{k}_{\lambda},\hat{k}_{\lambda}\right\rangle\!\\
	&\qquad \qquad \qquad \qquad +\!\left\langle t g^{2p}(|B_i|)+(1-t)f^{2p}(|A_i|)\hat{k}_{\mu},\hat{k}_{\mu}\right\rangle \bigg)\\
	& \qquad \qquad  \leq \frac{n^{p-1}}{2}\sum_{i=1}^{n} \Big(\textit{ber}\left( t f^{2p}(|A_i|)+(1-t)g^{2p}(|B_i|)\right)\\
	&\qquad \qquad \qquad \qquad +\textit{ber}\left( t g^{2p}(|B_i|)+(1-t)f^{2p}(|A_i|)\right)\Big).
\end{split}
\end{equation*}
	Taking the supremum over all $\lambda,\mu \in \Omega,$ we get the desired result.\\
    The other inequality can be directly obtained by choosing
	$$f(t)g(t)=t \text{\quad and \quad} B_i=A_i^*.$$
\end{proof}
 \begin{remark}
	By setting $n=1,t=\frac{1}{2}$ and $f(t)=g(t)=\sqrt{t} $ in the above Theorem, we get the upper bound for the Berezin radius which is stated in Corollary~\ref{bound}\,(i). Hence, Corollary~\ref{bound}\,(i) appears as a special case of the preceding result.
\end{remark}

\begin{theorem}
	Let $A, B \in {B(\mathcal{H})} $  with $A$ have polar decomposition 
	$$
	A=\mathcal{U}|A|
	$$
	and let $f,g:[0,\infty) \to [0,\infty)$ be nonnegative continuous functions. Let $r,s > 1$ be conjugate exponents satisfying $\frac{1}{r}+\frac{1}{s}=1.$ If $\sigma_t \leq \nabla_t$, then for $p \geq 2$
\begin{equation*}
\begin{split}
	\left\|g\left(|B|\right)f\left(|A^*|\right)\mathcal{U}\right\|^p_{ber_{\sigma_t}} &\leq \textit{ber}\left(t\frac{1}{r} f^{pr}\left(|A|\right)+(1-t)\frac{1}{s}g^{ps}\left(|B|\right)\right)\\
	&\quad +\textit{ber}\left(t\frac{1}{s}g^{ps}\left(|B|\right)+(1-t)\frac{1}{r}f^{pr}\left(|A|\right)\right).
\end{split}
\end{equation*}
	In particular, if $f$ and $g$ are nonnegative continuous functions on $[0,\infty)$ such that $f(t)g(t)=t$, then 
\begin{equation*}
\begin{split}
	\left\|A\right\|^p_{ber_{\sigma_t}} &\leq \textit{ber}\left(t\frac{1}{r} f^{pr}\left(|A|\right)+(1-t)\frac{1}{s}g^{ps}\left(|A^*|\right)\right)\\
	&\quad+\textit{ber}\left(t\frac{1}{s}g^{ps}\left(|A^*|\right)+(1-t)\frac{1}{r}f^{pr}\left(|A|\right)\right).
\end{split}
\end{equation*}
\end{theorem}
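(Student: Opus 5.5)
Fix $\lambda,\mu\in\Omega$ and write $T=g(|B|)f(|A^*|)\mathcal{U}$. The plan is to follow the proof of the preceding theorem, replacing the arithmetic–geometric mean step by Young's inequality with exponents $r,s$.

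First, apply \eqref{2.1} with $x=\hat{k}_\lambda$, $y=\hat{k}_\mu$, and raise to the power $p/2$:
$$|\langle T\hat{k}_\lambda,\hat{k}_\mu\rangle|^p\le \langle f^2(|A|)\hat{k}_\lambda,\hat{k}_\lambda\rangle^{\frac p2}\langle g^2(|B|)\hat{k}_\mu,\hat{k}_\mu\rangle^{\frac p2}.$$
Since $\langle T^*\hat{k}_\lambda,\hat{k}_\mu\rangle=\overline{\langle T\hat{k}_\mu,\hat{k}_\lambda\rangle}$, the same inequality with $\lambda,\mu$ interchanged bounds the adjoint term.

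Next, because $p\ge2$, we have $p/2\ge1$, so \eqref{2.6} applies to the positive operators $f^2(|A|)$ and $g^2(|B|)$. This gives $\langle f^2(|A|)\hat{k}_\lambda,\hat{k}_\lambda\rangle^{p/2}\le\langle f^p(|A|)\hat{k}_\lambda,\hat{k}_\lambda\rangle$, and similarly for $g$. Write $a=\langle f^p(|A|)\hat{k}_\lambda,\hat{k}_\lambda\rangle$ and $b=\langle g^p(|B|)\hat{k}_\mu,\hat{k}_\mu\rangle$.

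Then apply Young's inequality, $ab\le \frac1r a^r+\frac1s b^s$. Using \eqref{2.6} once more with exponents $r>1$ and $s>1$ gives
$$a^r\le\langle f^{pr}(|A|)\hat{k}_\lambda,\hat{k}_\lambda\rangle,\qquad b^s\le\langle g^{ps}(|B|)\hat{k}_\mu,\hat{k}_\mu\rangle.$$
Treat the adjoint term the same way with $\lambda\leftrightarrow\mu$, to get $a',b'$.

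Now use monotonicity of $\sigma_t$ and the hypothesis $\sigma_t\le\nabla_t$. The quantity inside the supremum is then at most $t$ times the first Young bound plus $(1-t)$ times the second. Regrouping by the vectors $\hat{k}_\lambda$ and $\hat{k}_\mu$ gives
$$\Big\langle \big(t\tfrac1r f^{pr}(|A|)+(1-t)\tfrac1s g^{ps}(|B|)\big)\hat{k}_\lambda,\hat{k}_\lambda\Big\rangle+\Big\langle\big(t\tfrac1s g^{ps}(|B|)+(1-t)\tfrac1r f^{pr}(|A|)\big)\hat{k}_\mu,\hat{k}_\mu\Big\rangle.$$
Each of these two terms is bounded by the corresponding Berezin radius. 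Taking the supremum over $\lambda,\mu$ then yields the first inequality. The point to check is that the pairing comes out as stated: the $\lambda$-terms are $f$ from the $T$-part (weight $t$) and $g$ from the $T^*$-part (weight $1-t$). This matches the statement.

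For the particular case, take $B=A^*$, so $|B|=|A^*|$. Then \eqref{2.3}, equivalently \eqref{2.1} with $f(t)g(t)=t$, bounds $|\langle A\hat{k}_\lambda,\hat{k}_\mu\rangle|$ in the same form with $g^2(|A^*|)$. Repeating the steps above gives the second inequality.

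The main obstacle I expect is bookkeeping rather than any new idea. One has to check that \eqref{2.6} is applied with exponents at least $1$; this is exactly why $p\ge2$ and $r,s>1$ are assumed. One also has to track the swapped roles of $\lambda$ and $\mu$ in the adjoint term, so that the operator combinations land in the right Berezin radii.
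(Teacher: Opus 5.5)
Your proposal is correct and follows essentially the same route as the paper: \eqref{2.1} combined with \eqref{2.6} at exponent $p/2\ge 1$, then \eqref{2.6} at exponents $r,s$ together with Young's inequality, the $\lambda\leftrightarrow\mu$ swap for the adjoint term, and finally $\sigma_t\le\nabla_t$ with regrouping into the two Berezin radii. The only differences are cosmetic: the paper applies \eqref{2.6} with $r,s$ before Young rather than after, and you spell out the use of $p\ge 2$ in the first inequality, which the paper leaves implicit.
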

\begin{proof}
	We have
\begin{equation*}
\begin{split}
    \left|\left\langle g\left(|B|\right)f\left(|A^*|\right)\mathcal{U} \hat{k} _{\lambda},\hat{k}_{\mu}\right\rangle\right|^p &\leq  \left\langle f^p\left(|A|\right)\hat{k}_{\lambda},\hat{k}_{\lambda}\right\rangle\left\langle g^p\left(|B|\right)\hat{k}_{\mu},\hat{k}_{\mu}\right\rangle\\
	&\leq \left\langle f^{pr}\left(|A|\right)\hat{k}_{\lambda},\hat{k}_{\lambda}\right\rangle^{\frac{1}{r}}\left\langle g^{ps}\left(|B|\right)\hat{k}_{\mu},\hat{k}_{\mu}\right\rangle^{\frac{1}{s}}.
    \end{split}
\end{equation*}
Applying Young's inequality with conjugate exponents $r,s >1~\left(\frac{1}{r}+\frac{1}{s}=1\right),$ gives
$$
	\left\langle f^{pr}\left(|A|\right)\hat{k}_{\lambda},\hat{k}_{\lambda}\right\rangle^{\frac{1}{r}}\left\langle g^{ps}\left(|B|\right)\hat{k}_{\mu},\hat{k}_{\mu}\right\rangle^{\frac{1}{s}}\leq \frac{1}{r}\left\langle f^{pr}\left(|A|\right)\hat{k}_{\lambda},\hat{k}_{\lambda}\right\rangle+\frac{1}{s}\left\langle g^{ps}\left(|B|\right)\hat{k}_{\mu},\hat{k}_{\mu}\right\rangle.
$$
	Now consider 
\begin{equation*}
\begin{split}
	\left|\left\langle g\left(|B|\right)f\left(|A^*|\right)\mathcal{U} \hat{k} _{\lambda},\hat{k}_{\mu}\right\rangle\right|^p&~\sigma_t~\left|\left\langle \left(g\left(|B|\right)f\left(|A^*|\right)\mathcal{U}\right)^* \hat{k} _{\lambda},\hat{k}_{\mu}\right\rangle\right|^p\\
	&  \leq \bigg(\frac{1}{r}\left\langle f^{pr}\left(|A|\right)\hat{k}_{\lambda},\hat{k}_{\lambda}\right\rangle+\frac{1}{s}\left\langle g^{ps}\left(|B|\right)\hat{k}_{\mu},\hat{k}_{\mu}\right\rangle\bigg) \\
	& \qquad \quad~\sigma_t~ \bigg(\frac{1}{r}\left\langle f^{pr}\left(|A|\right)\hat{k}_{\mu},\hat{k}_{\mu}\right\rangle+\frac{1}{s}\left\langle g^{ps}\left(|B|\right)\hat{k}_{\lambda},\hat{k}_{\lambda}\right\rangle\bigg) \\
	& \leq t\left( \frac{1}{r}\!\left\langle f^{pr}\left(|A|\right)\hat{k}_{\lambda},\hat{k}_{\lambda}\right\rangle\!+\!\frac{1}{s}\!\left\langle g^{ps}\left(|B|\right)\hat{k}_{\mu},\hat{k}_{\mu}\right\rangle \right) \!\\
	&\qquad +\!(1-t)\left( \frac{1}{r}\!\left\langle f^{pr}\left(|A|\right)\hat{k}_{\mu},\hat{k}_{\mu}\right\rangle\!+\!\frac{1}{s}\!\left\langle g^{ps}\left(|B|\right)\hat{k}_{\lambda},\hat{k}_{\lambda}\right\rangle\right)\\
	& =\left\langle \left(t\frac{1}{r} f^{pr}\left(|A|\right)+(1-t)\frac{1}{s}g^{ps}\left(|B|\right)\right)\hat{k}_{\lambda},\hat{k}_{\lambda} \right\rangle\\
	&\qquad +\left\langle\left(t\frac{1}{s}g^{ps}\left(|B|\right)+(1-t)\frac{1}{r}f^{pr}\left(|A|\right)\right)\hat{k}_{\mu},\hat{k}_{\mu}\right\rangle\\
	& \leq \textit{ber}\left(t\frac{1}{r} f^{pr}\left(|A|\right)+(1-t)\frac{1}{s}g^{ps}\left(|B|\right)\right)\\
	&\qquad +\textit{ber}\left(t\frac{1}{s}g^{ps}\left(|B|\right)+(1-t)\frac{1}{r}f^{pr}\left(|A|\right)\right).
\end{split}
\end{equation*}
    Taking the supremum over all $\lambda,\mu \in \Omega,$ we get the desired result.\\
    The other inequality can be directly obtained by choosing
	$$f(t)g(t)=t \text{\quad and \quad} B=A^* .$$ 
\end{proof}
\begin{remark}
	By setting $r=s=2, t=\frac{1}{2} \text{ and } f(t)=g(t)=\sqrt{t}$ for all $ t\geq 0,$ we get bound for  the Berezin radius stated in Corollary~\ref{bound}\,(i). Hence, Corollary~\ref{bound}\,(i) appears as a special case of the preceding result.
\end{remark}
\begin{theorem}\label{3.12}
	Let $A_i,B_i\in {B}\mathcal{(H)}$, $i=1,2,\dots,n$, and suppose that each $A_i$ has the polar decomposition
	$$
	A_i=\mathcal{U}_i|A_i|.
   	$$
   	Let $f,g:[0,\infty)\to[0,\infty)$ be nonnegative continuous functions.  
	If $\sigma_t\leq \nabla_t,$ then for $p\geq 1$
\begin{equation*}
\begin{split}
	\left\|\sum_{i=1}^{n} g(|B_i|)\,f(|A_i^*|)\,\mathcal{U}_i \right\|_{ber_{\sigma_t}}^{2p}
	&\leq\frac{n^{2p-1}}{2}\Bigg(\sum_{i=1}^{n}\textit{ber}\!\left(f^{2p}(|A_i|)g^{2p}(|B_i|)\right)\\
	&\qquad +\frac{1}{2}\sum_{i=1}^{n}\bigl(\|f^{4p}(|A_i|)\|+\|g^{4p}(|B_i|)\|\bigr)\Bigg).
\end{split}
\end{equation*}
	In particular, if $f$ and $g$ are nonnegative continuous functions on $[0,\infty)$ such that $f(t)g(t)=t$, then 
	\[
	\left\|\sum_{i=1}^{n} A_i \right\|_{ber_{\sigma_t}}^{2p}\leq\frac{n^{2p-1}}{2}\left(\sum_{i=1}^{n} \textit{ber}\!\left(f^{2p}(|A_i|)g^{2p}(|A_i^*|)\right)
	+\frac{1}{2}\sum_{i=1}^{n}\bigl(\|f^{4p}(|A_i|)\|+\|g^{4p}(|A_i^*|)\|\bigr)\right).
    \]
\end{theorem}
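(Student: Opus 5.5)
Set $T=\sum_{i=1}^{n} g(|B_i|)f(|A_i^*|)\mathcal{U}_i$ and fix $\lambda,\mu\in\Omega$. The plan is to bound $|\langle T\hat{k}_\lambda,\hat{k}_\mu\rangle|^{2p}$ and $|\langle T^*\hat{k}_\lambda,\hat{k}_\mu\rangle|^{2p}$ by one common quantity $M$, independent of $\lambda,\mu$. Once that is done, the monotonicity of $\sigma_t$ together with the mean property $M\,\sigma_t\,M=M$ (from (ii) of the definition of a mean, or from $\sigma_t\le\nabla_t$) gives
$$|\langle T\hat{k}_\lambda,\hat{k}_\mu\rangle|^{2p}\,\sigma_t\,|\langle T^*\hat{k}_\lambda,\hat{k}_\mu\rangle|^{2p}\le M.$$
Taking the supremum over $\lambda,\mu$ then yields $\|T\|_{ber_{\sigma_t}}^{2p}\le M$. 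Here we use the $\sigma_t$-Berezin norm with exponent $2p$; since the right side does not depend on the exponent in the definition, this amounts to the stated inequality.

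\textbf{Step 1 (triangle and power mean).} By the triangle inequality and \eqref{2.4} with exponent $2p$,
$$|\langle T\hat{k}_\lambda,\hat{k}_\mu\rangle|^{2p}\le n^{2p-1}\sum_{i=1}^n |\langle g(|B_i|)f(|A_i^*|)\mathcal{U}_i\hat{k}_\lambda,\hat{k}_\mu\rangle|^{2p}.$$

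\textbf{Step 2 (mixed Schwarz and Jensen).} Apply \eqref{2.1} to each summand and then \eqref{2.6}. This gives
$$|\langle g(|B_i|)f(|A_i^*|)\mathcal{U}_i\hat{k}_\lambda,\hat{k}_\mu\rangle|^{2p}\le \langle f^{2p}(|A_i|)\hat{k}_\lambda,\hat{k}_\lambda\rangle\langle g^{2p}(|B_i|)\hat{k}_\mu,\hat{k}_\mu\rangle=|\langle f^{2p}(|A_i|)\hat{k}_\lambda,\hat{k}_\lambda\rangle\langle \hat{k}_\mu, g^{2p}(|B_i|)\hat{k}_\mu\rangle|.$$

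\textbf{Step 3 (Buzano), the key step.} The two factors sit at different points $\lambda,\mu$, whereas the target contains the product $f^{2p}(|A_i|)g^{2p}(|B_i|)$ evaluated at a single point. This is the main obstacle. Buzano's inequality \eqref{2.7} with $e=\hat{k}_\lambda$ needs both inner products to involve $e$, so it does not apply directly. I would handle this by first applying AM--GM/Cauchy--Schwarz,
$$\langle F\hat{k}_\lambda,\hat{k}_\lambda\rangle\langle G\hat{k}_\mu,\hat{k}_\mu\rangle\le \tfrac12\big(\langle F\hat{k}_\lambda,\hat{k}_\lambda\rangle^2+\langle G\hat{k}_\mu,\hat{k}_\mu\rangle^2\big),$$
which is not the desired form. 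A better route is to run the mixed Schwarz inequality with both vectors evaluated at the same point, which is available through the $\sigma_t$ symmetrization. Concretely, I would bound each product $\langle F x,x\rangle\langle Gy,y\rangle$ by the symmetric quantity $\sup_\nu \langle F\hat{k}_\nu,\hat{k}_\nu\rangle\langle G\hat{k}_\nu,\hat{k}_\nu\rangle$. Then Buzano with $x=F\hat{k}_\nu$, $y=G\hat{k}_\nu$, $e=\hat{k}_\nu$ gives
$$\langle F\hat{k}_\nu,\hat{k}_\nu\rangle\langle \hat{k}_\nu,G\hat{k}_\nu\rangle\le \tfrac12\big(|\langle GF\hat{k}_\nu,\hat{k}_\nu\rangle|+\|F\hat{k}_\nu\|\|G\hat{k}_\nu\|\big).$$
With $F=f^{2p}(|A_i|)$ and $G=g^{2p}(|B_i|)$, the first term is at most $\tfrac12\textit{ber}(f^{2p}(|A_i|)g^{2p}(|B_i|))$, taking adjoints so the order matches. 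For the second term, AM--GM gives $\|F\hat{k}_\nu\|\|G\hat{k}_\nu\|\le\tfrac12(\langle F^2\hat{k}_\nu,\hat{k}_\nu\rangle+\langle G^2\hat{k}_\nu,\hat{k}_\nu\rangle)\le\tfrac12(\|f^{4p}(|A_i|)\|+\|g^{4p}(|B_i|)\|)$. This is precisely the shape of the claimed bound. The point I expect to need care with is justifying the passage from distinct $\lambda,\mu$ to a common point. If that passage fails, I would instead accept the diagonal terms that appear after the $\sigma_t\le\nabla_t$ averaging, as in the proofs of the preceding theorems.

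\textbf{Step 4 (adjoint and conclusion).} Treat $T^*$ identically, with $\lambda$ and $\mu$ swapped, to obtain the same $M$. Summing over $i$ gives
$$M=\frac{n^{2p-1}}{2}\Big(\sum_{i=1}^n\textit{ber}(f^{2p}(|A_i|)g^{2p}(|B_i|))+\frac12\sum_{i=1}^n(\|f^{4p}(|A_i|)\|+\|g^{4p}(|B_i|)\|)\Big).$$
For the particular case, take $B_i=A_i^*$ and $f(t)g(t)=t$; then $g(|A_i^*|)f(|A_i^*|)\mathcal{U}_i=|A_i^*|\mathcal{U}_i=A_i$, and the stated corollary follows.
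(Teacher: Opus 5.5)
The gap is the step you flagged in Step~3, and it cannot be closed. The inequality $\langle F\hat{k}_\lambda,\hat{k}_\lambda\rangle\langle G\hat{k}_\mu,\hat{k}_\mu\rangle\le\sup_{\nu}\langle F\hat{k}_\nu,\hat{k}_\nu\rangle\langle G\hat{k}_\nu,\hat{k}_\nu\rangle$ is false. The $\sigma_t$-symmetrization does not supply it either: passing from $T$ to $T^*$ only interchanges $\lambda$ and $\mu$. Both arguments of $\sigma_t$ therefore remain products of quadratic forms at two \emph{different} kernels, and Buzano's inequality \eqref{2.7}, which needs one common unit vector $e$, does not apply to them. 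For a concrete failure, regard $\mathcal{H}=\mathbb{C}^2$ as an RKHS on $\Omega=\{1,2\}$ with $\hat{k}_j=e_j$, and let $E_{jk}$ denote the matrix units ($E_{jk}e_k=e_j$). Put $F=E_{22}$, $G=E_{11}$, $\lambda=2$, $\mu=1$. Then the left side is $1$ and the right side is $0$. Your fallback (AM--GM or Young after $\sigma_t\le\nabla_t$, as in the earlier theorems) only produces Berezin numbers of convex combinations of an $F$-term and a $G$-term, never of the product $FG$. So it proves a different inequality.

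In fact no argument can work, because the statement is false. In the same space take $n=1$, $A=E_{12}$ (so $Ae_2=e_1$ and $Ae_1=0$), $B=A^*$, $f=g=\sqrt{\cdot}$, any $p\ge 1$, and $t=1$. Since $a\,\sigma_1\,b=a=a\,\nabla_1\,b$, the hypothesis holds. Moreover $\|A\|_{ber_{\sigma_1}}=\|A\|_{ber}=|\langle Ae_2,e_1\rangle|=1$, whatever exponent is used in Definition~\ref{definition1.2}. On the other hand $|A|=E_{22}$ and $|A^*|=E_{11}$, so $f^{2p}(|A|)g^{2p}(|A^*|)=E_{22}E_{11}=0$ and $\|f^{4p}(|A|)\|=\|g^{4p}(|A^*|)\|=1$. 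The right-hand side therefore equals $\tfrac12<1$. With $\sigma=\nabla$ and exponent $2p$ in the definition, the left side equals $\max\{t,1-t\}$, so the inequality fails for every $t\neq\tfrac12$.

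The paper's proof has the same defect. It applies \eqref{2.7} directly to $\langle f^{2p}(|A_i|)\hat{k}_\lambda,\hat{k}_\lambda\rangle\langle g^{2p}(|B_i|)\hat{k}_\mu,\hat{k}_\mu\rangle$ with $\lambda\neq\mu$; in the example this step would assert $1\le\tfrac12$. It also later replaces $\|f^{2p}(|A_i|)g^{2p}(|B_i|)\|_{ber}$ by $\textit{ber}(f^{2p}(|A_i|)g^{2p}(|B_i|))$, which is justified only for positive operators, and this product is not positive in general.

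What your Steps~1--2 do legitimately yield, after bounding each quadratic form by the operator norm, is $\|T\|^{2p}_{ber_{\sigma_t}}\le n^{2p-1}\sum_{i=1}^n\|f^{2p}(|A_i|)\|\,\|g^{2p}(|B_i|)\|$. This bound is sharp in the example above. The claimed refinement through $\textit{ber}(f^{2p}(|A_i|)g^{2p}(|B_i|))$ is exactly what the example rules out.
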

\begin{proof}
	For $\lambda,\mu\in\Omega$, we have
\begin{equation*}
\begin{split}
    &\left|\left\langle
	\sum_{i=1}^{n} g(|B_i|)f(|A_i^*|)\mathcal{U}_i \hat{k}_{\lambda},
	\hat{k}_{\mu}\right\rangle\right|^{2p}\\
	&\qquad\leq\left(\sum_{i=1}^{n}\left|\left\langle g(|B_i|)f(|A_i^*|)\mathcal{U}_i \hat{k}_{\lambda},\hat{k}_{\mu}\right\rangle\right|\right)^{2p} \\
	&\qquad\leq\left(\sum_{i=1}^{n}\langle f^{2}(|A_i|)\hat{k}_{\lambda},\hat{k}_{\lambda}\rangle^{1/2}
	\langle g^{2}(|B_i|)\hat{k}_{\mu},\hat{k}_{\mu}\rangle^{1/2}
	\right)^{2p} \\
    &\qquad\leq n^{2p-1}\sum_{i=1}^{n}
	\langle f^{2p}(|A_i|)\hat{k}_{\lambda},\hat{k}_{\lambda}\rangle
	\langle g^{2p}(|B_i|)\hat{k}_{\mu},\hat{k}_{\mu}\rangle \\
	&\qquad\leq \frac{n^{2p-1}}{2}\sum_{i=1}^{n}\left(\left|\left\langle
	f^{2p}(|A_i|)\hat{k}_{\lambda},g^{2p}(|B_i|)\hat{k}_{\mu}\right\rangle\right|+\|f^{2p}(|A_i|)\hat{k}_{\lambda}\|\|g^{2p}(|B_i|)\hat{k}_{\mu}\|\right)\quad (by (\ref{2.7})) \\
	&\qquad\leq\frac{n^{2p-1}}{2}\sum_{i=1}^{n}\left(\left|\left\langle f^{2p}(|A_i|)\hat{k}_{\lambda},g^{2p}(|B_i|)\hat{k}_{\mu}
	\right\rangle\right|+\frac{1}{2}\bigl(\|f^{4p}(|A_i|)\|+\|g^{4p}(|B_i|)\|\bigr)\right).
\end{split}
\end{equation*}
	Now consider
\begin{equation*}
\begin{aligned}
	&\left|\left\langle\sum_{i=1}^{n} g(|B_i|)f(|A_i^*|)\mathcal{U}_i \hat{k}_{\lambda},\hat{k}_{\mu}
	\right\rangle\right|^{2p}\,\sigma_t\,\left|\left\langle
	\sum_{i=1}^{n}\bigl(g(|B_i|)f(|A_i^*|)\mathcal{U}_i\bigr)^*
	\hat{k}_{\lambda},\hat{k}_{\mu}\right\rangle\right|^{2p} \\[6pt]
	&\qquad \leq\frac{n^{2p-1}}{2}\Bigg(\sum_{i=1}^{n}\left(\left|
	\left\langle f^{2p}(|A_i|)\hat{k}_{\lambda},g^{2p}(|B_i|)\hat{k}_{\mu}
	\right\rangle\right|+\frac{1}{2}\bigl(\|f^{4p}(|A_i|)\|+\|g^{4p}(|B_i|)\|\bigr)\right)\\
	&\qquad \qquad\sigma_t~ \sum_{i=1}^{n}\bigg(\left|\left\langle
	f^{2p}(|A_i|)\hat{k}_{\mu},g^{2p}(|B_i|)\hat{k}_{\lambda}
	\right\rangle\right|+\frac{1}{2}\sum_{i=1}^{n}
	\bigl(\|f^{4p}(|A_i|)\|+\|g^{4p}(|B_i|)\|\bigr)\bigg)\Bigg) \\
	&\qquad \leq\frac{n^{2p-1}}{2}\Bigg(t\Bigg[\sum_{i=1}^{n}
	\left|\left\langle f^{2p}(|A_i|)\hat{k}_{\lambda},
	g^{2p}(|B_i|)\hat{k}_{\mu}\right\rangle\right|+\frac{1}{2}
	\sum_{i=1}^{n}\bigl(\|f^{4p}(|A_i|)\|+\|g^{4p}(|B_i|)\|\bigr)
	\Bigg] \\
	&\qquad \qquad +(1-t)\Bigg[	\sum_{i=1}^{n}\left|\left\langle
	f^{2p}(|A_i|)\hat{k}_{\mu},g^{2p}(|B_i|)\hat{k}_{\lambda}
	\right\rangle\right|+\frac{1}{2}\sum_{i=1}^{n}\bigl(
	\|f^{4p}(|A_i|)\|+\|g^{4p}(|B_i|)\|\bigr)\Bigg]\Bigg) \\
	&\qquad \leq\frac{n^{2p-1}}{2}
	\Bigg(t\Bigg[\sum_{i=1}^{n}\left|\left\langle\hat{k}_{\lambda},
	f^{2p}(|A_i|)g^{2p}(|B_i|)\hat{k}_{\mu}\right\rangle
	\right|+\frac{1}{2}\sum_{i=1}^{n}\bigl(\|f^{4p}(|A_i|)\|+\|g^{4p}(|B_i|)\|\bigr)\Bigg] \\
	&\qquad \qquad+(1-t)\Bigg[\sum_{i=1}^{n}
    \left|\left\langle\hat{k}_{\mu},f^{2p}(|A_i|)g^{2p}(|B_i|)\hat{k}_{\lambda}\right\rangle\right|+\frac{1}{2}\sum_{i=1}^{n}\bigl(
	\|f^{4p}(|A_i|)\|+\|g^{4p}(|B_i|)\|\bigr)\Bigg]\Bigg) \\
	&\qquad \leq\frac{n^{2p-1}}{2}\left(\sum_{i=1}^{n}\|f^{2p}(|A_i|)g^{2p}(|B_i|)\|_{ber}+\frac{1}{2}\sum_{i=1}^{n}\bigl(\|f^{4p}(|A_i|)\|+\|g^{4p}(|B_i|)\|\bigr)\right) \\
	&\qquad =\frac{n^{2p-1}}{2}\left(\sum_{i=1}^{n}\textit{ber}\!\left(f^{2p}(|A_i|)g^{2p}(|B_i|)\right)+\frac{1}{2}\sum_{i=1}^{n}
    \bigl(\|f^{4p}(|A_i|)\|+\|g^{4p}(|B_i|)\|\bigr)\right).
\end{aligned}
\end{equation*}
	Taking the supremum over all $\lambda,\mu\in\Omega$, we get the desired result.\\
	The other inequality follows by choosing
    $$
	 f(t)g(t)=t
	 \quad \text{and} \quad
	 B_i=A_i^*.
	$$
\end{proof}
	By setting $n=1,t=\frac{1}{2}$ and $f(t)=g(t)=\sqrt{t} $ in Theorem~\ref{3.12}, we get the following upper bound for the Berezin radius.
\begin{corollary}
	Let $ A\in {B(\mathcal{H})} $. Then for $p\geq 1$
	$$
	\textit{ber}^{2p}\left(A\right) \leq \frac{1}{2}\left(\textit{ber}\left(|A|^p|A^*|^p\right)+\frac{1}{2}\left(\left\||A|\right\|^{2p}+\left\||A^*|\right\|^{2p}\right)\right).
	$$
\end{corollary}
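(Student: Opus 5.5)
The corollary should drop out of the particular case of Theorem~\ref{3.12} once we specialize the parameters. Take $n=1$, $t=\tfrac12$, $A_1=A$, and $f(s)=g(s)=\sqrt{s}$, so that $f(s)g(s)=s$. With these choices the particular form of Theorem~\ref{3.12} reads
\begin{equation*}
\|A\|_{ber_{\sigma_{1/2}}}^{2p}\leq \frac{1}{2}\left(\textit{ber}\left(|A|^{p}|A^*|^{p}\right)+\frac{1}{2}\left(\left\||A|^{2p}\right\|+\left\||A^*|^{2p}\right\|\right)\right),
\end{equation*}
because $f^{2p}(|A|)=|A|^p$, $g^{2p}(|A^*|)=|A^*|^p$, $f^{4p}(|A|)=|A|^{2p}$ and $g^{4p}(|A^*|)=|A^*|^{2p}$. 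The factor $n^{2p-1}$ equals $1$. The hypothesis $\sigma_{1/2}\le\nabla_{1/2}$ holds for whatever symmetric mean we use; to be safe we may simply take $\sigma=\nabla$.

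Next, bound the left-hand side from below using \eqref{inequality}, namely $\textit{ber}(A)\le\|A\|_{ber_{\sigma_t}}$. Raising this to the power $2p$ gives $\textit{ber}^{2p}(A)\le\|A\|_{ber_{\sigma_{1/2}}}^{2p}$. Alternatively, one can argue directly: at $\mu=\lambda$ both entries of the mean equal $|\widetilde A(\lambda)|^{2p}$, and any mean satisfies $a\sigma a=a$.

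Finally, the norm terms need to be rewritten. Since $|A|$ and $|A^*|$ are positive operators, the spectral theorem gives $\||A|^{2p}\|=\||A|\|^{2p}$, and likewise $\||A^*|^{2p}\|=\||A^*|\|^{2p}$. Substituting these yields exactly the stated inequality.

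There is no real obstacle here. The only point to check is the bookkeeping of the exponents in $f^{2p}$ and $f^{4p}$, making sure these are read as $(f(|A|))^{2p}$ and so on.
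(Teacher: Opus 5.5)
Your proposal is the paper's own proof: specialize Theorem~\ref{3.12} with $n=1$, $t=\tfrac12$, $f=g=\sqrt{\cdot}$, then use \eqref{inequality} and $\||A|^{2p}\|=\||A|\|^{2p}$; fixing $\sigma=\nabla$ is the right choice, since $\sigma\le\nabla$ is not automatic for a symmetric mean (it fails, e.g., for the quadratic mean $\sqrt{(a^2+b^2)/2}$). Be aware that this route inherits a flaw in Theorem~\ref{3.12}: its Buzano step \eqref{2.7} is applied to $\langle f^{2p}(|A_i|)\hat{k}_\lambda,\hat{k}_\lambda\rangle\langle g^{2p}(|B_i|)\hat{k}_\mu,\hat{k}_\mu\rangle$, which has the form $\langle x,e\rangle\langle e,y\rangle$ only when $\mu=\lambda$ (the theorem in fact fails for $t=1$ and $A=\begin{pmatrix}0&1\\0&0\end{pmatrix}$ on $\mathbb{C}^2$ with kernels $e_1,e_2$, where the left side is $1$ and the right side is $\tfrac12$), so the airtight version of your argument runs the chain only at $\mu=\lambda$, namely \eqref{2.3} with $f=g=\sqrt{\cdot}$, then \eqref{2.6}, then \eqref{2.7} with $e=\hat{k}_\lambda$, giving $|\widetilde{A}(\lambda)|^{2p}\le\frac12\bigl(|\langle |A|^p|A^*|^p\hat{k}_\lambda,\hat{k}_\lambda\rangle|+\||A|^p\hat{k}_\lambda\|\,\||A^*|^p\hat{k}_\lambda\|\bigr)$, from which the corollary follows by AM--GM and a supremum over $\lambda$.
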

	 The following result applies to any interpolation path of a symmetric mean $\sigma$.
\begin{theorem}
	Let $ A\in {B(\mathcal{H})} $. Then for $p\geq 1$
	$$
	\|A\|^p_{ber_{\sigma_t}} \leq   \textit{ber}^{\frac{p}{2}}\left(|A|^2+|A^*|^2\right).
	$$
\end{theorem}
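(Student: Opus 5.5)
Since $\sigma_t$ is a mean, for $a\le b$ we have $a\le a\sigma_t b\le b$, so $a\,\sigma_t\, b\le \max\{a,b\}$ for every $t\in[0,1]$. I would therefore bound each of $|\langle A\hat{k}_\lambda,\hat{k}_\mu\rangle|$ and $|\langle A^*\hat{k}_\lambda,\hat{k}_\mu\rangle|$ by one common quantity, and then take the maximum inside the mean.

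\textbf{Step 1: the bound for $A$.} I would apply \eqref{2.3} with $f(s)=g(s)=\sqrt{s}$. This gives
$$|\langle A\hat{k}_\lambda,\hat{k}_\mu\rangle|^2\le \langle |A|\hat{k}_\lambda,\hat{k}_\lambda\rangle\langle |A^*|\hat{k}_\mu,\hat{k}_\mu\rangle .$$

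\textbf{Step 2: the bound for $A^*$.} Since $|(A^*)^*|=|A|$, applying \eqref{2.3} to $A^*$ gives
$$|\langle A^*\hat{k}_\lambda,\hat{k}_\mu\rangle|^2\le \langle |A^*|\hat{k}_\lambda,\hat{k}_\lambda\rangle\langle |A|\hat{k}_\mu,\hat{k}_\mu\rangle .$$

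\textbf{Step 3: a common upper bound.} Each right-hand side is a product $\langle X\hat{k}_\lambda,\hat{k}_\lambda\rangle\langle Y\hat{k}_\mu,\hat{k}_\mu\rangle$ with $X,Y\in\{|A|,|A^*|\}$, so each factor is at most $\langle(|A|+|A^*|)\,\cdot\,,\cdot\rangle$ at the corresponding point. That route would lead to $\textit{ber}(|A|+|A^*|)$, which is not the stated bound. To reach $|A|^2+|A^*|^2$, I would instead use $\langle |A|x,x\rangle\le\langle |A|^2x,x\rangle^{1/2}$ for unit $x$, which is \eqref{2.6} in reverse form. This gives
$$|\langle A\hat{k}_\lambda,\hat{k}_\mu\rangle|^2\le \langle |A|^2\hat{k}_\lambda,\hat{k}_\lambda\rangle^{1/2}\langle |A^*|^2\hat{k}_\mu,\hat{k}_\mu\rangle^{1/2}.$$
By AM–GM and then bounding each single term by the sum, the right-hand side is at most
$$\tfrac12\bigl(\langle |A|^2\hat{k}_\lambda,\hat{k}_\lambda\rangle+\langle |A^*|^2\hat{k}_\mu,\hat{k}_\mu\rangle\bigr)\le \tfrac12\bigl(\langle (|A|^2+|A^*|^2)\hat{k}_\lambda,\hat{k}_\lambda\rangle+\langle (|A|^2+|A^*|^2)\hat{k}_\mu,\hat{k}_\mu\rangle\bigr)\le \textit{ber}(|A|^2+|A^*|^2).$$
The same estimate holds for $A^*$ with the roles of $\lambda$ and $\mu$ swapped.

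Alternatively, more simply, Cauchy–Schwarz gives $|\langle A\hat{k}_\lambda,\hat{k}_\mu\rangle|^2\le \|A\hat{k}_\lambda\|^2=\langle|A|^2\hat{k}_\lambda,\hat{k}_\lambda\rangle\le\textit{ber}(|A|^2+|A^*|^2)$. Likewise $|\langle A^*\hat{k}_\lambda,\hat{k}_\mu\rangle|^2\le \langle|A^*|^2\hat{k}_\lambda,\hat{k}_\lambda\rangle$, which is bounded by the same quantity. This is the cleanest route, and I would use it.

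\textbf{Step 4: conclusion.} Write $M=\textit{ber}^{p/2}(|A|^2+|A^*|^2)$. Both $p$-th powers $|\langle A\hat{k}_\lambda,\hat{k}_\mu\rangle|^p$ and $|\langle A^*\hat{k}_\lambda,\hat{k}_\mu\rangle|^p$ are at most $M$. By monotonicity of $\sigma_t$ (property (iii) of an interpolation path), their mean is at most $M\,\sigma_t\,M$. Homogeneity together with $1\,\sigma_t\,1=1$ gives $M\,\sigma_t\,M=M$. Taking the supremum over $\lambda,\mu\in\Omega$ gives the result.

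The main point to check is the identity $a\,\sigma_t\, a=a$ for a general interpolation path. It follows from the mean property (ii) applied to $\sigma_t$, if we treat each $\sigma_t$ as a mean as the definition indicates ("parameterized operator mean"). Beyond that, the argument uses no hypothesis such as $\sigma_t\le\nabla_t$, which is consistent with the statement holding for every interpolation path.
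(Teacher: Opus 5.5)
Your proof is correct, but its key estimate differs from the paper's. The paper writes $A=\mathcal{R}(A)+i\mathcal{I}(A)$ and bounds $|\langle A\hat{k}_\lambda,\hat{k}_\mu\rangle|\le\|\mathcal{R}(A)\hat{k}_\lambda\|+\|\mathcal{I}(A)\hat{k}_\lambda\|$. It then applies $(\sqrt{a}+\sqrt{b})^2\le 2(a+b)$ and the identity $2\bigl(\mathcal{R}^2(A)+\mathcal{I}^2(A)\bigr)=|A|^2+|A^*|^2$. Applying the same identity to $A^*$, both arguments of the mean are dominated by the single quantity $\langle(|A|^2+|A^*|^2)\hat{k}_\lambda,\hat{k}_\lambda\rangle^{p/2}$. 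From there the paper uses monotonicity and $a\,\sigma_t\,a=a$, exactly as you do.

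You skip the Cartesian decomposition and apply Cauchy--Schwarz directly. This gives $\langle|A|^2\hat{k}_\lambda,\hat{k}_\lambda\rangle$ and $\langle|A^*|^2\hat{k}_\lambda,\hat{k}_\lambda\rangle$, and you then add the missing positive term. Since $\|A\hat{k}_\lambda\|\le\|\mathcal{R}(A)\hat{k}_\lambda\|+\|\mathcal{I}(A)\hat{k}_\lambda\|$, your intermediate bound is never weaker than the paper's. In fact your argument yields $\|A\|^p_{ber_{\sigma_t}}\le\max\{\textit{ber}(|A|^2),\textit{ber}(|A^*|^2)\}^{p/2}$. You even get the minimum if you also use $|\langle A\hat{k}_\lambda,\hat{k}_\mu\rangle|\le\|A^*\hat{k}_\mu\|$. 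This shows the stated inequality is rather coarse; it also follows at once from $\|A\|_{ber_{\sigma_t}}\le\|A\|_{ber}\le\sup_\lambda\|A\hat{k}_\lambda\|$.

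The Cartesian route buys only that $|A|^2+|A^*|^2$ arises naturally, in the style of the classical numerical-radius estimate, rather than by discarding a positive summand. It gives nothing sharper here. The mixed Schwarz/AM--GM detour at the start of your Step 3 is valid but unnecessary.
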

\begin{proof}
	We have
\begin{equation*}
\begin{split}
    \|A\|^p_{ber_{\sigma_t}} & =  \sup_{\lambda,\mu \in\Omega}\left\lbrace |\langle A\hat{k}_\lambda,\hat{k}_\mu\rangle |^p~ \sigma_{t}~ |\langle A^*\hat{k}_\lambda,\hat{k}_\mu\rangle |^p\right\rbrace\\
	&\leq  \sup_{\lambda,\mu\in\Omega}\bigg\lbrace\left( |\langle \mathcal{R}(A)\hat{k}_\lambda,\hat{k}_\mu\rangle|+|\langle \mathcal{I}(A)\hat{k}_\lambda,\hat{k}_\mu\rangle|\right)^{p}\\
	&\qquad \qquad \qquad ~ \sigma_{t}~\left( |\langle \mathcal{R}(A^*)\hat{k}_\lambda,\hat{k}_\mu\rangle|+|\langle \mathcal{I}(A^*)\hat{k}_\lambda,\hat{k}_\mu\rangle|\right)^p\bigg\rbrace\\
	&\leq \sup_{\lambda,\mu\in\Omega}\left\lbrace\left( \| \mathcal{R}(A)\hat{k}_\lambda\|+\|\mathcal{I}(A)\hat{k}_\lambda\|\right)^p~ \sigma_{t}~\left( \| \mathcal{R}(A^*)\hat{k}_\lambda\|+\|\mathcal{I}(A^*)\hat{k}_\lambda\|\right)^{p}\right\rbrace\\
	&= \sup_{\lambda,\mu\in\Omega}\bigg\lbrace\left( \langle \mathcal{R}^2(A)\hat{k}_\lambda,\hat{k}_\lambda\rangle^{\frac{1}{2}}+\langle \mathcal{I}^2(A)\hat{k}_\lambda,\hat{k}_\lambda\rangle^{\frac{1}{2}}\right)^p\\
	&\qquad \qquad \qquad~ \sigma_{t}~\left( \langle \mathcal{R}^2(A^*)\hat{k}_\lambda,\hat{k}_\lambda\rangle^{\frac{1}{2}}+\langle \mathcal{I}^2(A^*)\hat{k}_\lambda,\hat{k}_\lambda\rangle^{\frac{1}{2}}\right)^p\bigg\rbrace\\
	&\leq  \sup_{\lambda,\mu\in\Omega}\left\lbrace\left(2 \langle \mathcal{R}^2(A)+ \mathcal{I}^2(A)\hat{k}_\lambda,\hat{k}_\lambda\rangle\right)^\frac{p}{2}~ \sigma_{t}~\left( 2 \langle \mathcal{R}^2(A^*)+ \mathcal{I}^2(A^*)\hat{k}_\lambda,\hat{k}_\lambda\rangle\right)^\frac{p}{2}\right\rbrace\\
	&= \sup_{\lambda,\mu\in\Omega}\left\lbrace\left\langle \left(|A|^2+ |A^*|^2\right)\hat{k}_\lambda,\hat{k}_\lambda\right\rangle^\frac{p}{2}~ \sigma_{t}~\left\langle\left(|A^*|^2+ |A|^2\right)\hat{k}_\lambda,\hat{k}_\lambda\right\rangle^\frac{p}{2}\right\rbrace\\
	&=\sup_{\lambda,\mu\in\Omega}\left\lbrace\left\langle \left(|A|^2+ |A^*|^2\right)\hat{k}_\lambda,\hat{k}_\lambda\right\rangle^\frac{p}{2}\right\rbrace\\
	&=\textit{ber}^{\frac{p}{2}}\left(|A|^2+|A^*|^2\right),\\
\end{split}
\end{equation*}
	as desired.
\end{proof}
	
Recently, Bhunia et al.~\cite[Theorem 5]{bhunia2023some} characterized unitary operators 
in terms of the Berezin number. More precisely, they showed that 
an invertible operator $A \in B(\mathcal{H})$ is unitary if and only if 
$
\textit{ber}(A^*A) \le 1 
 \text{ and }  
\textit{ber}\big((A^*A)^{-1}\big) \le 1.
$

This naturally raises the question of whether unitary operators can 
also be characterized using the newly defined $\sigma_t$-Berezin norm. 
The following result shows that this is indeed the case.

\begin{theorem}
	Suppose $A \in B\mathcal{(H)}$ is invertible. Then $A$ is unitary if and only if $\|A^*A\|_{ber_{\sigma_t}}\!\leq \!1$ and $\|(A^*A)^{-1}\|_{ber_{\sigma_t}} \!\leq\! 1.$
\end{theorem}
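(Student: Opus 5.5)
The plan is to reduce the statement to the Berezin-number characterization of Bhunia et al.~\cite[Theorem 5]{bhunia2023some} quoted just before the theorem. By that result, an invertible $A$ is unitary if and only if $\textit{ber}(A^*A)\le 1$ and $\textit{ber}\big((A^*A)^{-1}\big)\le 1$. It therefore suffices to compare $\|\cdot\|_{ber_{\sigma_t}}$ with $\textit{ber}(\cdot)$ on the two positive operators $A^*A$ and $(A^*A)^{-1}$.

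For the forward direction, suppose $A$ is unitary. Then $A^*A=I$ and $(A^*A)^{-1}=I$. Since $I$ is self-adjoint, the definition gives
\[
\|I\|_{ber_{\sigma_t}}=\sup_{\lambda,\mu}\Big(|\langle \hat{k}_\lambda,\hat{k}_\mu\rangle|^p\,\sigma_t\,|\langle \hat{k}_\lambda,\hat{k}_\mu\rangle|^p\Big)^{1/p}.
\]
Each mean satisfies $a\sigma_t a=a$, because for $a\le b$ we have $a\le a\sigma_t b\le b$; here I use that each $\sigma_t$ is itself a mean. So the quantity reduces to $\sup_{\lambda,\mu}|\langle \hat{k}_\lambda,\hat{k}_\mu\rangle|\le 1$ by Cauchy--Schwarz. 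Alternatively, I can simply invoke \eqref{inequality}, which gives $\|I\|_{ber_{\sigma_t}}\le \|I\|_{ber}\le \|I\|=1$. Either way both conditions hold.

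For the converse, assume $\|A^*A\|_{ber_{\sigma_t}}\le 1$ and $\|(A^*A)^{-1}\|_{ber_{\sigma_t}}\le 1$. The left inequality in \eqref{inequality}, $\textit{ber}(T)\le \|T\|_{ber_{\sigma_t}}$, then gives $\textit{ber}(A^*A)\le 1$ and $\textit{ber}\big((A^*A)^{-1}\big)\le 1$. The cited theorem now yields that $A$ is unitary.

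If I need to be self-contained rather than citing \cite{bhunia2023some}, I would prove that step directly. Put $P=A^*A$, which is positive and invertible. By the result of \cite{equality}, $\textit{ber}(P)=\|P\|_{ber}$. The key point is to pass from Berezin quantities to operator quantities: I need $\langle P x,x\rangle\le 1$ for all unit $x$, not just for normalized kernels. This is where the RKHS structure enters, and it is the main obstacle. With the citation available, I take it as given. The conclusion would then follow from $\|P\|\le 1$ and $\|P^{-1}\|\le 1$, which force $\sigma(P)\subseteq[1,1]$, hence $P=I$. Thus $A^*A=I$, and invertibility gives $AA^*=I$.

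The only delicate point remaining is the justification of $\textit{ber}(T)\le\|T\|_{ber_{\sigma_t}}$ in \eqref{inequality}. Taking $\lambda=\mu$, we have $|\langle T^*\hat{k}_\lambda,\hat{k}_\lambda\rangle|=|\widetilde{T}(\lambda)|$. The mean property then gives $(a\,\sigma_t\,a)^{1/p}=|\widetilde{T}(\lambda)|$, and taking the supremum over $\lambda$ yields the inequality.
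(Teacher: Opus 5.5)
Your proof is correct and is essentially the paper's. Both arguments reduce the hypotheses to $\textit{ber}(A^*A)\le 1$ and $\textit{ber}\big((A^*A)^{-1}\big)\le 1$ via $\textit{ber}(T)\le\|T\|_{ber_{\sigma_t}}$. The paper then inlines the argument behind the cited result of Bhunia et al. instead of quoting it, namely $\|(A-(A^{-1})^*)\hat{k}_\lambda\|^2=\widetilde{A^*A}(\lambda)+\widetilde{(A^*A)^{-1}}(\lambda)-2\le 0$ for every $\lambda\in\Omega$.

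Your optional self-contained sketch points the wrong way, however. The bound $\textit{ber}(P)\le 1$ alone does not give $\|P\|\le 1$: a suitable multiple of the rank-one projection onto $z^n$ ($n\ge 1$) on $H^2(\mathbb{D})$ has Berezin number $1$ but norm $>1$. No such passage to operator quantities is needed, because the identity above uses both hypotheses at each kernel simultaneously.
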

\begin{proof}
	The necessity is trivial, as it follows from the definition of $\sigma_t$-Berezin  norm. To show the sufficiency, we calculate $\|(A-A^{-1^*})\hat{k}_\lambda\|^2$. Using the inequalities $\textit{ber}(A^*A)\leq\|A^*A\|_{ber_{\sigma_t}}\!\leq \!1 $ and $\textit{ber}((A^*A)^{-1})\leq\|(A^*A)^{-1}\|_{ber_{\sigma_t}}\!\leq \!1 $, we obtain $\|(A-A^{-1^*})\hat{k}_\lambda\|=0$ for all $\lambda \in \Omega$. This gives $A^*=A^{-1}.$
\end{proof}

    \textbf{Inequalities for Operator Matrices:}
    
   Here we study the $\sigma_t$-Berezin norm of $2\times 2$ operator matrices
   $
   \begin{pmatrix}
   	A & B\\
   	C & D
   \end{pmatrix},
   $
   acting on the direct sum of reproducing kernel Hilbert spaces. We derive new
   bounds for the $\sigma_t$-Berezin norm of any arbitrary $ 2 \times 2 $ operator matrix in terms of the diagonal and
   off-diagonal entries. Furthermore, we characterize those diagonal invertible operator
   matrices that are also unitary.
   
	We begin with the following proposition.
\begin{proposition}
	Let $A,B \in {B\mathcal{(H)}}$. Then 
\begin{itemize}
	\item [$(i)$] If $\sigma_{t}\leq \nabla_t$, then  $\left\|\begin{pmatrix}
	A & 0 \\
	0 & B
	\end{pmatrix}\right\|_{ber_{\sigma_t}} \!\leq \max\left\{\|A\|_{{ber}_{\nabla_t}},\|B\|_{{ber}_{\nabla_t}}\right\}.$
	\item [$(ii)$]  $\left\|\begin{pmatrix}
	0 & A \\
	B & 0
	\end{pmatrix}\right\|_{ber_{\sigma_t}}=\left\|\begin{pmatrix}
	0 & B \\
	A & 0
	\end{pmatrix}\right\|_{ber_{\sigma_t}}.$
	\item[$(iii)$] $\left\|\begin{pmatrix}
	0 & A \\
	0 & 0
	\end{pmatrix}\right\|^p_{ber_{\nabla_t}}\leq \max \left\{t,1-t\right\}\|A\|^p_{ber}.$
\end{itemize}
\end{proposition}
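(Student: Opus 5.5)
The plan is a direct computation with the normalized reproducing kernels of $\mathcal{H}\oplus\mathcal{H}$. I take the direct sum as an RKHS on the disjoint union $\Omega\sqcup\Omega$, as is customary for operator matrices. Its normalized kernels are then exactly the vectors $(\hat{k}_\lambda,0)$ and $(0,\hat{k}_\lambda)$, $\lambda\in\Omega$. So each supremum in Definition~\ref{definition1.2} splits into four families of pairs $(x,y)$, according to which component $x$ and $y$ live in. I read $\|\cdot\|_{ber_{\nabla_t}}$ with the same exponent $p$, that is, $\|A\|^p_{ber_{\nabla_t}}=\sup_{\lambda,\mu}\{t|\langle A\hat{k}_\lambda,\hat{k}_\mu\rangle|^p+(1-t)|\langle A^*\hat{k}_\lambda,\hat{k}_\mu\rangle|^p\}$.

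For $(i)$, let $T=\begin{pmatrix} A&0\\0&B\end{pmatrix}$, so that $T^*=\begin{pmatrix} A^*&0\\0&B^*\end{pmatrix}$.
\begin{itemize}
\item If $x$ and $y$ lie in different components, then $\langle Tx,y\rangle=\langle T^*x,y\rangle=0$. The mean of $0$ and $0$ is $0$ by property (ii) of means.
\item If $x=(\hat{k}_\lambda,0)$ and $y=(\hat{k}_\mu,0)$, the two inner products are $\langle A\hat{k}_\lambda,\hat{k}_\mu\rangle$ and $\langle A^*\hat{k}_\lambda,\hat{k}_\mu\rangle$. Using $\sigma_t\le\nabla_t$, the $p$-th power of the quantity is at most $t|\langle A\hat{k}_\lambda,\hat{k}_\mu\rangle|^p+(1-t)|\langle A^*\hat{k}_\lambda,\hat{k}_\mu\rangle|^p\le\|A\|^p_{ber_{\nabla_t}}$.
\item The second component gives the same bound with $B$ in place of $A$.
\end{itemize}
Taking the supremum over all four cases and then $p$-th roots gives $(i)$.

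For $(ii)$, I use the unitary $U=\begin{pmatrix}0&I\\ I&0\end{pmatrix}=U^*$. A direct check gives $U\begin{pmatrix}0&A\\B&0\end{pmatrix}U=\begin{pmatrix}0&B\\A&0\end{pmatrix}$, and the same identity holds for the adjoints. Moreover $U$ permutes the set of normalized kernels, sending $(\hat{k}_\lambda,0)\leftrightarrow(0,\hat{k}_\lambda)$. Hence for $S=\begin{pmatrix}0&A\\B&0\end{pmatrix}$ we have $\langle USUx,y\rangle=\langle S(Ux),Uy\rangle$, and likewise for $S^*$. So the sets of pairs of values $\bigl(|\langle\cdot\,x,y\rangle|,|\langle\cdot^*x,y\rangle|\bigr)$ over which the two suprema are taken coincide, and the two norms are equal.

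For $(iii)$, let $T=\begin{pmatrix}0&A\\0&0\end{pmatrix}$, so that $T^*=\begin{pmatrix}0&0\\A^*&0\end{pmatrix}$. Here $\langle Tx,y\rangle\neq0$ only when $x=(0,\hat{k}_\lambda)$ and $y=(\hat{k}_\mu,0)$. In that case $\langle T^*x,y\rangle=0$, because $T^*x$ is supported in the second component while $y$ is supported in the first. The quantity is then $t|\langle A\hat{k}_\lambda,\hat{k}_\mu\rangle|^p\le t\|A\|^p_{ber}$. Symmetrically, $\langle T^*x,y\rangle\neq0$ only when $x=(\hat{k}_\lambda,0)$ and $y=(0,\hat{k}_\mu)$. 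There $\langle Tx,y\rangle=0$, and the quantity is $(1-t)|\langle A^*\hat{k}_\lambda,\hat{k}_\mu\rangle|^p\le(1-t)\|A\|^p_{ber}$, since $\|A^*\|_{ber}=\|A\|_{ber}$. Every other pair gives $0$, so the supremum is at most $\max\{t,1-t\}\|A\|^p_{ber}$.

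The only delicate point is the kernel bookkeeping, namely the identification of the normalized kernels of the direct sum. Once that is fixed, each part is a case check.
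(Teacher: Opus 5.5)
Your case analysis is correct in the model you chose, and your part $(ii)$ is the paper's argument (conjugation by the swap unitary), which works in either model. The gap is the model itself. The paper does not treat $\mathcal{H}\oplus\mathcal{H}$ as an RKHS on $\Omega\sqcup\Omega$. Its normalized kernels are $\hat{k}_{(\lambda_1,\lambda_2)}=(k_{\lambda_1},k_{\lambda_2})$ for $(\lambda_1,\lambda_2)\in\Omega\times\Omega$, with $\|k_{\lambda_1}\|^2+\|k_{\lambda_2}\|^2=1$. In general both components are nonzero, and the $\sigma_t$-Berezin norm of an operator matrix is the supremum over pairs of such vectors. Your vectors $(\hat{k}_\lambda,0)$ and $(0,\hat{k}_\lambda)$ are only the special cases with one component zero. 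So you bound a smaller supremum, and that does not bound the paper's.

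Your opening reduction to four families of pairs is exactly the step that fails. For the diagonal matrix one gets $\langle T\hat{k}_{(\lambda_1,\lambda_2)},\hat{k}_{(\mu_1,\mu_2)}\rangle=\langle Ak_{\lambda_1},k_{\mu_1}\rangle+\langle Bk_{\lambda_2},k_{\mu_2}\rangle$, which mixes the two blocks. In $(iii)$, the quantities $\langle Ak_{\lambda_2},k_{\mu_1}\rangle$ and $\langle A^*k_{\lambda_1},k_{\mu_2}\rangle$ can both be nonzero. So your claim that $\langle T^*x,y\rangle=0$ whenever $\langle Tx,y\rangle\neq0$ is false there.

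The missing idea is how to control these mixed kernels, and this is where the paper's work lies.

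\begin{itemize}
\item[$(i)$] After using $\sigma_t\le\nabla_t$, the paper applies Minkowski's inequality for the weighted norm $(z,w)\mapsto(t|z|^p+(1-t)|w|^p)^{1/p}$ on $\mathbb{C}^2$ to separate the $A$- and $B$-contributions. Homogeneity then bounds the result by $\|A\|_{ber_{\nabla_t}}\|k_{\lambda_1}\|\|k_{\mu_1}\|+\|B\|_{ber_{\nabla_t}}\|k_{\lambda_2}\|\|k_{\mu_2}\|$. It concludes with the Cauchy--Schwarz estimate $\|k_{\lambda_1}\|\|k_{\mu_1}\|+\|k_{\lambda_2}\|\|k_{\mu_2}\|\le 1$.
\item[$(iii)$] It bounds $|\langle Ak_{\lambda_2},k_{\mu_1}\rangle|^p\le\|A\|_{ber}^p\|k_{\lambda_2}\|\|k_{\mu_1}\|$, using that all component norms are at most $1$ and $p\ge1$. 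It does the same for the $A^*$ term and pulls out $\max\{t,1-t\}$. It then applies $\|k_{\lambda_2}\|\|k_{\mu_1}\|+\|k_{\lambda_1}\|\|k_{\mu_2}\|\le1$.
\end{itemize}

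In each of your four cases at most one of these products is nonzero, which is why no such estimate was needed there. Adding these two steps would repair your proof.
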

\begin{proof}
	$(1)$ Let $T=\begin{pmatrix}
	A & 0 \\
	0 & B
	\end{pmatrix}.$ For any $(\lambda_1,\lambda_2),(\mu_1,\mu_2) \in \Omega  \times  \Omega$, let $\hat{k}_{(\lambda_1,\lambda_2)}=(k_{\lambda_1},k_{\lambda_2}),\\ \hat{k
	}_{(\mu_1,\mu_2)}=(k_{\mu_1},k_{\mu_2}) $ be two normalised reproducing kernels in $\mathcal{H}\oplus \mathcal{H}.$ Then 
\begin{equation*}
\begin{split}
	&\left(
	 |\langle T\hat{k}_{(\lambda_1,\lambda_2)}, \hat{k}_{(\mu_1,\mu_2)} \rangle|^p 
	~\sigma_t~ 
	|\langle T^*\hat{k}_{(\lambda_1,\lambda_2)}, \hat{k}_{(\mu_1,\mu_2)} \rangle|^p
	\right)^{\frac{1}{p}}\\
    &\quad = \left(|\langle A  k_{\lambda_1},k_{\mu_1}\rangle+\langle B k_{\lambda_2},k_{\mu_2}\rangle|^p ~\sigma_t ~ 	|\langle A^* k_{\lambda_1},k_{\mu_1}\rangle+\langle B^* k_{\lambda_2},k_{\mu_2}\rangle|^p\right)^{\frac{1}{p}}\\
	&\quad \leq \left(t|\langle A k_{\lambda_1},k_{\mu_1}\rangle+\langle B k_{\lambda_2},k_{\mu_2}\rangle|^p +(1-t)	|\langle A^* k_{\lambda_1},k_{\mu_1}\rangle+\langle B^* k_{\lambda_2},k_{\mu_2}\rangle|^p\right)^{\frac{1}{p}}\\
	&\quad \leq \left(t|\langle A k_{\lambda_1},k_{\mu_1}\rangle|^p+(1-t)	|\langle A^* k_{\lambda_1},k_{\mu_1}\rangle|^p\right)^{\frac{1}{p}}+\left(t|\langle B k_{\lambda_2},k_{\mu_2}\rangle|^p +(1-t)\langle B^* k_{\lambda_2},k_{\mu_2}\rangle|^p\right)^{\frac{1}{p}}\\
	&\quad \leq \|A\|_{ber_{\nabla_t}}\|k_{\lambda_1}\| \|k_{\mu_1}\|+\|B\|_{ber_{\nabla_t}}\|k_{\lambda_2}\|\|k_{\mu_2}\|\\
	&\quad \leq \max\{\|A\|_{ber_{\nabla_t}},\|B\|_{ber_{\nabla_t}}\}(\|k_{\lambda_1}\|\|k_{\mu_1}\|+\|k_{\lambda_2}\|\|k_{\mu_2}\|)\\
	&\quad \leq \max\{\|A\|_{ber_{\nabla_t}},\|B\|_{ber_{\nabla_t}}\}.
\end{split}
\end{equation*}
	This gives inequality $(i)$.\\ 
    $(ii)$ It follows from the fact that $\left\| P^*\begin{pmatrix}
    0 & A \\
	B & 0
	\end{pmatrix}P\right\|_{ber_{\sigma_t}}=\left\|\begin{pmatrix}
	0 & A \\
	B & 0
	\end{pmatrix}\right\|_{ber_{\sigma_t}},$ where $P=\begin{pmatrix}
	0 & I \\
	I & 0
	\end{pmatrix} \in {B}{(\mathcal{H} \oplus \mathcal{H})}$.\\
	$(iii)$	Let $T=	\begin{pmatrix}
	0 & A \\
	0 & 0
	\end{pmatrix}$. For any $(\lambda_1,\lambda_2),(\mu_1,\mu_2) \in \Omega  \times  \Omega ,$ let $\hat{k}_{(\lambda_1,\lambda_2)}=(k_{\lambda_1},k_{\lambda_2}),\\ \hat{k}_{(\mu_1,\mu_2)}=(k_{\mu_1},k_{\mu_2}) $ be two normalised reproducing kernels in $\mathcal{H}\oplus \mathcal{H}.$ Then we have 
\begin{equation*}
\begin{split}
	&|\langle T\hat{k}_{(\lambda_1,\lambda_2)}, \hat{k}_{(\mu_1,\mu_2)} \rangle|^p{\nabla_t}
	|\langle T^*\hat{k}_{(\lambda_1,\lambda_2)}, \hat{k}_{(\mu_1,\mu_2)} \rangle|^p\\
	&\qquad =t|\langle A{k}_{\lambda_2}, {k}_{\mu_1} \rangle|^p+(1-t)|\langle A^*{k}_{\lambda_1}, {k}_{\mu_2} \rangle|^p\\
	&\qquad \leq \|A\|^p_{ber}(t\|k_{\lambda_2}\|\|k_{\mu_1}\|+(1-t)\|k_{\lambda_1}\|\|k_{\mu_2}\|)\\
	&\qquad \leq \max \{t,1-t\}\|A\|^p_{ber}(\|k_{\lambda_2}\|\|k_{\mu_1}\|+\|k_{\lambda_1}\|\|k_{\mu_2}\|)\\
	&\qquad \leq \max \{t,1-t\}\|A\|^p_{ber}.
\end{split}
\end{equation*}
\end{proof}
	Using this proposition, we establish an upper bound for $2\times 2$ off-diagonal operator matrices.
\begin{corollary}
    Let $A,B \in {B\mathcal{(H)}}$ with $\sigma_t\leq\nabla_t$, then
    $$\left\|\begin{pmatrix}
    0 & A \\
   	B & 0
    \end{pmatrix}\right\|_{ber_{\sigma_t}} \leq \max\{t,1-t\}^{\frac{1}{p}} (\|A\|_{ber}+\|B\|_{ber}).$$
\end{corollary}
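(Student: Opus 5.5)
The plan is to split the off-diagonal matrix into two strictly upper/lower triangular pieces and estimate each one with part $(iii)$ of the preceding proposition. Write
$$
\begin{pmatrix} 0 & A\\ B & 0\end{pmatrix}=\begin{pmatrix} 0 & A\\ 0 & 0\end{pmatrix}+\begin{pmatrix} 0 & 0\\ B & 0\end{pmatrix}=:T_1+T_2 .
$$
Since $\|\cdot\|_{ber_{\sigma_t}}$ is only known to be a seminorm without the triangle inequality, I will not apply the triangle inequality to it directly. Instead I work at the level of the kernel pairing, where $\sigma_t\le\nabla_t$ reduces everything to a convex combination.

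First step: fix normalized kernels $\hat k_{(\lambda_1,\lambda_2)},\hat k_{(\mu_1,\mu_2)}$ in $\mathcal H\oplus\mathcal H$. Using $\sigma_t\le\nabla_t$, bound the quantity
$$
\bigl|\langle T\hat k_{(\lambda_1,\lambda_2)},\hat k_{(\mu_1,\mu_2)}\rangle\bigr|^p\,\sigma_t\,\bigl|\langle T^*\hat k_{(\lambda_1,\lambda_2)},\hat k_{(\mu_1,\mu_2)}\rangle\bigr|^p
$$
by the same expression with $\nabla_t$ in place of $\sigma_t$. Taking $p$-th roots, the right side is an $\ell^p$-type weighted quantity
$$
\bigl(t|a_1+a_2|^p+(1-t)|b_1+b_2|^p\bigr)^{1/p},
$$
where $a_1=\langle Ak_{\lambda_2},k_{\mu_1}\rangle$, $a_2=\langle Bk_{\lambda_1},k_{\mu_2}\rangle$, and $b_1,b_2$ are the corresponding adjoint pairings. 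Minkowski's inequality for the weighted $\ell^p$ norm on two points then splits this into the sum of the $T_1$-term and the $T_2$-term. This is exactly the manoeuvre used in part $(i)$ of the proposition.

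Second step: bound each piece using the computation in part $(iii)$. The $T_1$ piece is at most $\bigl(\max\{t,1-t\}\|A\|_{ber}^p(\|k_{\lambda_2}\|\|k_{\mu_1}\|+\|k_{\lambda_1}\|\|k_{\mu_2}\|)\bigr)^{1/p}$. For $T_2$, either repeat the computation or pass to $T_1$ via part $(ii)$, i.e. conjugation by $P$, and obtain the same bound with $B$ in place of $A$. Since the kernel components satisfy $\|k_{\lambda_1}\|^2+\|k_{\lambda_2}\|^2=1$ and likewise for $\mu$, Cauchy--Schwarz gives that the bracketed cross sums are at most $1$. This yields $\max\{t,1-t\}^{1/p}\|A\|_{ber}$ and $\max\{t,1-t\}^{1/p}\|B\|_{ber}$ respectively. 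Adding the two bounds and taking the supremum over $\lambda,\mu$ gives the claim.

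The main obstacle is bookkeeping in the second step. The bound $|\langle Ak_{\lambda_2},k_{\mu_1}\rangle|^p\le\|A\|_{ber}^p\|k_{\lambda_2}\|\|k_{\mu_1}\|$ from the proposition is only legitimate when $\|k_{\lambda_2}\|\|k_{\mu_1}\|\le1$, so that raising the norm factors to the power $p$ only decreases them; this holds here because the components have norm at most one. The Minkowski step must also be applied before the norm factors are introduced, so that the weights $t$ and $1-t$ stay attached correctly.
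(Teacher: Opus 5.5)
Your proposal is correct and follows essentially the paper's route. The paper writes $\|T\|_{ber_{\sigma_t}}\le\|T\|_{ber_{\nabla_t}}\le\|T_1\|_{ber_{\nabla_t}}+\|T_2\|_{ber_{\nabla_t}}$ and then applies part $(iii)$ to each piece. You carry out the same triangle-inequality (Minkowski) step at the level of individual kernel pairings, which has the small merit of justifying the triangle inequality for the $\nabla_t$-quantity, something the paper uses without comment.
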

\begin{proof}
    We have 
\begin{equation*}
\begin{split}
    \left\|\begin{pmatrix}       
    0 & A \\
 	B & 0
    \end{pmatrix}\right\|_{ber_{\sigma_t}} &\leq \left\|\begin{pmatrix}
  	0 & A \\
    B & 0
	\end{pmatrix}\right\|_{ber_{\nabla_t}}\leq\left\|\begin{pmatrix}
   	0 & A \\
    0 & 0
    \end{pmatrix}\right\|_{ber_{\nabla_t}}+ \left\|\begin{pmatrix}          		 0 & 0 \\
    B & 0
    \end{pmatrix}\right\|_{ber_{\nabla_t}}\\
    &\leq \max\{t,1-t\}^{\frac{1}{p}}(\|A\|_{ber}+\|B\|_{ber}).
\end{split}
\end{equation*}
\end{proof}
     Next theorem also provides an upper bound for $2\times 2$ off-diagonal operator matrices. 
\begin{theorem}
    Let $A,B \in B\mathcal{(H)}.$ Let $f $and $g$ be two nonnegative continuous functions defined on $[0,\infty)$ such that $f(t)g(t)=t$ for every $t \geq 0$. If $\sigma_{t}\leq \nabla_t, $ then for $p\geq 1$
\begin{equation*}
\begin{split}
 	\left\|\begin{pmatrix}
   	0 & A \\
 	B & 0
    \end{pmatrix}\right\|^p_{ber_{\sigma_t}} &\leq \frac{2^p}{4} \max \bigg\{\textit{ber}\left(tf^{2p}(|A|)+(1-t)f^{2p}(|A^*|)\right)+\textit{ber}\left(tg^{2p}(|A^*|)+(1-t)g^{2p}(|A|)\right),\\
    &\qquad \qquad \quad \textit{ber}\left(tf^{2p}(|B|)+(1-t)f^{2p}(|B^*|)\right)+\textit{ber}\left(tg^{2p}(|B^*|)+(1-t)g^{2p}(|B|)\right)\bigg\}.
\end{split}
\end{equation*}
\end{theorem}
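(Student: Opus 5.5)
Set $T=\begin{pmatrix}0 & A\\ B & 0\end{pmatrix}$, so that $T^*=\begin{pmatrix}0 & B^*\\ A^*&0\end{pmatrix}$. We follow the template of Proposition (i) and (iii) for operator matrices. Take normalized kernels of $\mathcal H\oplus\mathcal H$ written as $x=(x_1,x_2)$ and $y=(y_1,y_2)$, where $x_j=\alpha_j\hat k_{\lambda_j}$ and $y_j=\beta_j\hat k_{\mu_j}$ with $|\alpha_1|^2+|\alpha_2|^2=|\beta_1|^2+|\beta_2|^2=1$. Then
$$\langle Tx,y\rangle=\langle Ax_2,y_1\rangle+\langle Bx_1,y_2\rangle,\qquad \langle T^*x,y\rangle=\langle B^*x_2,y_1\rangle+\langle A^*x_1,y_2\rangle .$$
Since $\sigma_t\le\nabla_t$, it suffices to bound $t|\langle Tx,y\rangle|^p+(1-t)|\langle T^*x,y\rangle|^p$.

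\textbf{Step 1 (splitting).} Apply \eqref{2.4} with $n=2$ to each of the two sums; this produces the factor $2^{p-1}$. We get four terms:
$$t|\langle Ax_2,y_1\rangle|^p,\quad t|\langle Bx_1,y_2\rangle|^p,\quad (1-t)|\langle A^*x_1,y_2\rangle|^p,\quad (1-t)|\langle B^*x_2,y_1\rangle|^p .$$
Group them by operator:
$$\text{(I)}=t|\langle Ax_2,y_1\rangle|^p+(1-t)|\langle A^*x_1,y_2\rangle|^p,\qquad \text{(II)}=t|\langle Bx_1,y_2\rangle|^p+(1-t)|\langle B^*x_2,y_1\rangle|^p .$$

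\textbf{Step 2 (mixed Schwarz and AM--GM).} Apply \eqref{2.3} to $A$ and to $A^*$. Note that $|A^*|$ and $|(A^*)^*|=|A|$ swap roles in the second application. This gives
$$|\langle Ax_2,y_1\rangle|^p\le \langle f^2(|A|)x_2,x_2\rangle^{p/2}\langle g^2(|A^*|)y_1,y_1\rangle^{p/2},$$
$$|\langle A^*x_1,y_2\rangle|^p\le \langle f^2(|A^*|)x_1,x_1\rangle^{p/2}\langle g^2(|A|)y_2,y_2\rangle^{p/2}.$$
Next use AM--GM, $ab\le\frac12(a^2+b^2)$, which contributes a factor $\frac12$. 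Then pass to $2p$-th powers of $f,g$ by \eqref{2.6}, after normalizing $x_j,y_j$; for $p\ge1$ the exponent is $p\ge1$, so this is valid. The combination $t\,(\cdot)+(1-t)\,(\cdot)$ then assembles into
$$\tfrac12\Big(\big\langle (tf^{2p}(|A|)+(1-t)f^{2p}(|A^*|))\,\cdot,\cdot\big\rangle+\big\langle (tg^{2p}(|A^*|)+(1-t)g^{2p}(|A|))\,\cdot,\cdot\big\rangle\Big),$$
evaluated at the normalized kernels. Each such term is bounded by the corresponding Berezin number. The same argument applies to (II) with $B$ in place of $A$.

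\textbf{Step 3 (the main obstacle: the weights).} The difficulty is that each evaluation carries weights $|\alpha_j|^p$ and $|\beta_j|^p$. Moreover, the two terms in (I) use different index pairs: $(x_2,y_1)$ for the $t$-term and $(x_1,y_2)$ for the $(1-t)$-term. So the clean combination above is not literally a single vector evaluation. I would handle this by bounding each weight by $1$ and each kernel expression by its Berezin number. Then (I) is at most $\frac12(\mathrm{ber}(\cdot)+\mathrm{ber}(\cdot))_A$ and (II) is at most the analogous $B$-quantity. Summing gives
$$2^{p-1}\cdot\tfrac12\,(M_A+M_B)\le 2^{p-1}\max\{M_A,M_B\}.$$
The target constant is $\frac{2^p}{4}=2^{p-2}$, and recovering the extra $\frac12$ is exactly the delicate point. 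I would try to use $|\alpha_1|^2+|\alpha_2|^2=1$ with $p\ge1$, which gives $|\alpha_1|^p|\beta_2|^p+|\alpha_2|^p|\beta_1|^p\le1$ by Cauchy--Schwarz. This shows the $A$-part and the $B$-part share one unit of total weight, so the weighted sum is at most $\max\{M_A,M_B\}$ rather than the sum.

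\textbf{Step 4.} Take the supremum over all kernels to conclude the stated bound.
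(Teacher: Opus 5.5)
Your Step 3 cannot be completed. This is not a technical problem: the inequality as stated is false. The obstruction you spotted is exactly what breaks it, namely that the $t$-term of $A$ lives on the index pair $(x_2,y_1)$ while the $(1-t)$-term of $A^*$ lives on $(x_1,y_2)$.

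Your proposed fix does not deliver what you claim. Bounding the two evaluations separately gives $t\,\textit{ber}(f^{2p}(|A|))+(1-t)\,\textit{ber}(f^{2p}(|A^*|))$. By subadditivity of $\textit{ber}$ this is \emph{at least} $\textit{ber}\bigl(tf^{2p}(|A|)+(1-t)f^{2p}(|A^*|)\bigr)$, so you cannot land on $M_A$.

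The weight-sharing argument fails too. Put $w_1=|\alpha_2|^p|\beta_1|^p$ and $w_2=|\alpha_1|^p|\beta_2|^p$. Your group (I) is $t\,w_1(\cdots)+(1-t)\,w_2(\cdots)$ and (II) is $t\,w_2(\cdots)+(1-t)\,w_1(\cdots)$. Both operator groups therefore draw on both weights, and $w_1+w_2\le 1$ does not yield $\max\{M_A,M_B\}$.

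Here is a counterexample. Let $\mathcal H=\mathbb C^2$ be the RKHS on $\Omega=\{1,2\}$ with $k_j=e_j$, and take $A=\begin{pmatrix}0&1\\0&0\end{pmatrix}$, $B=A^*$, $p=1$, $t=\tfrac12$, $\sigma=\nabla$, $f=g=\sqrt{\cdot}$.
\begin{itemize}
\item Then $|A|=\mathrm{diag}(0,1)$ and $|A^*|=\mathrm{diag}(1,0)$. All four Berezin numbers on the right equal $\tfrac12$, so the right-hand side is $\tfrac24\cdot 1=\tfrac12$.
\item But $T=\begin{pmatrix}0&A\\A^*&0\end{pmatrix}$ is self-adjoint. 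For $x=y=\tfrac1{\sqrt2}(\hat k_1,\hat k_2)$, which is admissible in your own parametrization, $\langle Tx,y\rangle=\tfrac12\bigl(\langle Ae_2,e_1\rangle+\langle A^*e_1,e_2\rangle\bigr)=1$.
\item Hence the left-hand side is at least $1$.
\end{itemize}

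The paper's proof does not get around this; it hides it. In its bound for $|\langle T^*\hat k_{(\lambda_1,\lambda_2)},\hat k_{(\mu_1,\mu_2)}\rangle|^p$ it attaches $A^*$ to $(k_{\lambda_2},k_{\mu_1})$ and $B^*$ to $(k_{\lambda_1},k_{\mu_2})$. However, $T^*=\begin{pmatrix}0&B^*\\A^*&0\end{pmatrix}$ gives $\langle B^*k_{\lambda_2},k_{\mu_1}\rangle+\langle A^*k_{\lambda_1},k_{\mu_2}\rangle$. In the example above, the mis-indexed bound would assert $|\langle T^*x,y\rangle|\le 0$.

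Your Steps 1--2 are sound, and they prove a correct statement if you group by index pair rather than by operator. The pair $(x_2,y_1)$ carries $tA$ and $(1-t)B^*$, and the pair $(x_1,y_2)$ carries $tB$ and $(1-t)A^*$. Apply Cauchy--Schwarz or AM--GM and then pass to $\textit{ber}$. Each group then collapses to a single kernel evaluation with weight $w_1$, respectively $w_2$, and $w_1+w_2\le 1$ gives
\begin{align*}
\|T\|^p_{ber_{\sigma_t}}\le\frac{2^p}{4}\max\Bigl\{&\textit{ber}\bigl(tf^{2p}(|A|)+(1-t)f^{2p}(|B^*|)\bigr)+\textit{ber}\bigl(tg^{2p}(|A^*|)+(1-t)g^{2p}(|B|)\bigr),\\
&\textit{ber}\bigl(tf^{2p}(|B|)+(1-t)f^{2p}(|A^*|)\bigr)+\textit{ber}\bigl(tg^{2p}(|B^*|)+(1-t)g^{2p}(|A|)\bigr)\Bigr\}.
\end{align*}
This agrees with the stated bound when $t\in\{0,1\}$ or $B=A$. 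It holds with equality in the example above.
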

\begin{proof}
    Let $T=	\begin{pmatrix}
    0 & A \\
   	B & 0		
    \end{pmatrix}$.
    For any $(\lambda_1,\lambda_2),(\mu_1,\mu_2) \in \Omega  \times  \Omega ,$ let $\hat{k}_{(\lambda_1,\lambda_2)}=(k_{\lambda_1},k_{\lambda_2}),\\ \hat{k}_{(\mu_1,\mu_2)}=(k_{\mu_1},k_{\mu_2}) $ be two normalised reproducing kernels in $\mathcal{H}\oplus \mathcal{H}.$ Then we have
\begin{equation*}
\begin{split}
 	|\langle T\hat{k}_{(\lambda_1,\lambda_2)},\hat{k}_{(\mu_1,\mu_2)}\rangle |^p&=|\langle  Ak_{\lambda_2},k_{\mu_1}\rangle + \langle Bk_{\lambda_1},k_{\mu_2} \rangle|^p\\
    &\leq (|\langle Ak_{\lambda_2},k_{\mu_1} \rangle|+|\langle  Bk_{\lambda_1},k_{\mu_2} \rangle|)^p\\
   	&\leq \frac{2^p}{2}(|\langle Ak_{\lambda_2},k_{\mu_1} \rangle|^p+|\langle  Bk_{\lambda_1},k_{\mu_2} \rangle|^p)\\
   	&\leq \frac{2^p}{2}\! \big(\langle f^{2p}(|A|)k_{\lambda_2},k_{\lambda_2} \rangle^{\frac{1}{2}}\langle g^{2p}(|A^*|)k_{\mu_1},k_{\mu_1}\rangle^{\frac{1}{2}}\!\\
   	&\qquad +\!\langle f^{2p}(|B|)k_{\lambda_1},k_{\lambda_1} \rangle^{\frac{1}{2}}\langle g^{2p}(|B^*|)k_{\mu_2},k_{\mu_2}\rangle^{\frac{1}{2}}\big).
\end{split}	
\end{equation*}
    In a similar way, we get
\begin{equation*}
\begin{split}
	 |\langle T^*\hat{k}_{(\lambda_1,\lambda_2)},\hat{k}_{(\mu_1,\mu_2)}\rangle |^p &\!\leq\! \frac{2^p}{2}\! \bigg(\langle f^{2p}(|A^*|)k_{\lambda_2},k_{\lambda_2} \rangle^{\frac{1}{2}}\langle g^{2p}(|A|)k_{\mu_1},k_{\mu_1}\rangle^{\frac{1}{2}}\!\\
	 &\qquad +\!\langle f^{2p}(|B^*|)k_{\lambda_1},k_{\lambda_1} \rangle^{\frac{1}{2}}\langle g^{2p}(|B|)k_{\mu_2},k_{\mu_2}\rangle^{\frac{1}{2}}\bigg).
\end{split}
\end{equation*}
    Therefore 
\begin{equation*}
\begin{split}
    &|\langle  T\hat{k}_{(\lambda_1,\lambda_2)},\hat{k}_{(\mu_1,\mu_2)}\rangle |^p ~  \sigma_t ~ |\langle T^*\hat{k}_{(\lambda_1,\lambda_2)},\hat{k}_{(\mu_1,\mu_2)}\rangle |^p\\
   	&\qquad \leq t\left|\left\langle T\hat{k}_{(\lambda_1,\lambda_2)},\hat{k}_{(\mu_1,\mu_2)}\right\rangle \right|^p +(1-t)\left|\left\langle T^*\hat{k}_{(\lambda_1,\lambda_2)},\hat{k}_{(\mu_1,\mu_2)}\right\rangle \right|^p \\
   	&\qquad \leq \frac{2^p}{2} \bigg(t\Big(\left\langle f^{2p}(|A|)k_{\lambda_2},k_{\lambda_2} \right\rangle^{\frac{1}{2}}\left\langle g^{2p}(|A^*|)k_{\mu_1},k_{\mu_1}\right\rangle^{\frac{1}{2}}\\
   	&\qquad \quad +\left\langle f^{2p}(|B|)k_{\lambda_1},k_{\lambda_1} \right\rangle^{\frac{1}{2}}\left\langle g^{2p}(|B^*|)k_{\mu_2},k_{\mu_2}\right\rangle^{\frac{1}{2}}\Big)\\
    &\qquad\quad+(1-t)\Big(\left\langle f^{2p}(|A^*|)k_{\lambda_2},k_{\lambda_2} \right\rangle^{\frac{1}{2}}\left\langle g^{2p}(|A|)k_{\mu_1},k_{\mu_1}\right\rangle^{\frac{1}{2}}\\
    &\qquad \quad +\left\langle f^{2p}(|B^*|)k_{\lambda_1},k_{\lambda_1} \right\rangle^{\frac{1}{2}}\left\langle g^{2p}(|B|)k_{\mu_2},k_{\mu_2}\right\rangle^{\frac{1}{2}}\Big)\bigg)\\
    &\qquad = \frac{2^p}{2} \bigg (t\Big(\left\langle f^{2p}(|A|)k_{\lambda_2},k_{\lambda_2} \right\rangle^{\frac{1}{2}}\left\langle g^{2p}(|A^*|)k_{\mu_1},k_{\mu_1}\right\rangle^{\frac{1}{2}}\Big)\\
    &\quad \qquad+(1-t)\left(\left\langle f^{2p}(|A^*|)k_{\lambda_2},k_{\lambda_2}\right\rangle^{\frac{1}{2}}
    \left\langle g^{2p}(|A|)k_{\mu_1},k_{\mu_1}\right\rangle^{\frac{1}{2}}\right)\\
    &\quad \qquad  +t\Big(\left\langle f^{2p}(|B|)k_{\lambda_1},k_{\lambda_1} \right\rangle^{\frac{1}{2}}\left\langle g^{2p}(|B^*|)k_{\mu_2},k_{\mu_2}\right\rangle^{\frac{1}{2}}\Big)\\
    &\quad\qquad+(1-t)\Big(\left\langle f^{2p}(|B^*|)k_{\lambda_1},k_{\lambda_1} \right\rangle^{\frac{1}{2}}\left\langle g^{2p}(|B|)k_{\mu_2},k_{\mu_2}\right\rangle^{\frac{1}{2}}\Big)\bigg)\\
    &\qquad \leq \frac{2^p}{2} \bigg( \big\langle \left(tf^{2p}(|A|)+(1-t)f^{2p}(|A^*|)\right)k_{\lambda_2},k_{\lambda_2} \big \rangle^{\frac{1}{2}}\\
    &\quad\qquad
    \left\langle(tg^{2p}(|A^*|)+(1-t)g^{2p}(|A|))k_{\mu_1},k_{\mu_1} \right\rangle^{\frac{1}{2}}\\
    &\quad\qquad  +\left\langle (tf^{2p}(|B|)+(1-t)f^{2p}(|B^*|))k_{\lambda_1},k_{\lambda_1} \right\rangle^{\frac{1}{2}}\\
    &\quad\qquad\left\langle (tg^{2p}(|B^*|)+(1-t)g^{2p}(|B|))k_{\mu_2},k_{\mu_2} \right\rangle^{\frac{1}{2}}\bigg)\\
    &\qquad \leq\frac{2^p}{2} \bigg( \Big( \textit{ber}^{\frac{1}{2}}\!\!\left(tf^{2p}(|A|) +(1-t)f^{2p}(|A^*|)\right)\\
    &\quad\qquad\qquad\quad \textit{ber}^{\frac{1}{2}}\!\!\left(tg^{2p}(|A^*|)+(1-t)g^{2p}(|A|)\right)\Big)\|k_{\lambda_2}\|\|k_{\mu_1}\|\\
    &\quad\qquad + \Big(\textit{ber}^{\frac{1}{2}}\!\!\left(tf^{2p}(|B|)+(1-t)f^{2p}(|B^*|)\right)\\
    &\quad\qquad\qquad\quad\textit{ber}^{\frac{1}{2}}\!\!\left(tg^{2p}(|B^*|)+(1-t)g^{2p}(|B|)\right)\Big)\|k_{\lambda_1}\|\|k_{\mu_2}\|\bigg)\\
    &\qquad\leq \frac{2^p}{2} \max \bigg\{\textit{ber}^{\frac{1}{2}}\!\!\left(tf^{2p}(|A|)+(1-t)f^{2p}(|A^*|)\right)\textit{ber}^{\frac{1}{2}}\!\!\left(tg^{2p}(|A^*|)+(1-t)g^{2p}(|A|)\right),\\
    &\qquad\qquad \qquad \quad \textit{ber}^{\frac{1}{2}}\!\!\left(tf^{2p}(|B|)+(1-t)f^{2p}(|B^*|)\right)\textit{ber}^{\frac{1}{2}}\!\!\left(tg^{2p}(|B^*|)+(1-t)g^{2p}(|B|)\right)\bigg\}\\
    &\qquad\qquad\qquad\quad\times \left(\|k_{\lambda_2}\|\|k_{\mu_1}\|+\|k_{\lambda_1}\|\|k_{\mu_2}\|\right)\\
    &\qquad\leq \frac{2^p}{4} \max \bigg\{\textit{ber}\left(tf^{2p}(|A|)+(1-t)f^{2p}(|A^*|)\right)+\textit{ber}\left(tg^{2p}(|A^*|)+(1-t)g^{2p}(|A|)\right),\\
    &\qquad\qquad \qquad \quad \textit{ber}\left(tf^{2p}(|B|)+(1-t)f^{2p}(|B^*|)\right)+\textit{ber}\left(tg^{2p}(|B^*|)+(1-t)g^{2p}(|B|)\right)\bigg\}.
\end{split}	
\end{equation*} 
    Taking the supremum, we get the desired inequality.
\end{proof}
\begin{corollary}
    Let $A,B \in B\mathcal{(H)}$. If $\sigma_{t}\leq \nabla_t, $ then for $p\geq 1$
\begin{equation*}
\begin{split}
    \left\|\begin{pmatrix}
   	0 & A \\
   	B & 0
    \end{pmatrix}\right\|^p_{ber_{\sigma_t}} &\leq \frac{2^p}{4} \max \bigg\{\textit{ber}\left(|A|^p+|A^*|^p\right), \textit{ber}\left(|B|^p+|B^*|^p\right)\bigg\}.
\end{split}
\end{equation*}
\end{corollary}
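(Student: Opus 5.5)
This corollary is the special case $f(s)=g(s)=\sqrt{s}$ of the preceding theorem, and I will prove it by specializing and simplifying the right-hand side. The product condition $f(s)g(s)=s$ holds for every $s\ge 0$, so the theorem applies with $f^{2p}(s)=g^{2p}(s)=s^{p}$. Hence $f^{2p}(|A|)=g^{2p}(|A|)=|A|^p$ and $f^{2p}(|A^*|)=g^{2p}(|A^*|)=|A^*|^p$, and the same holds with $B$ in place of $A$.

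Substituting these into the first entry of the maximum in the theorem gives
\[
\textit{ber}\left(t|A|^p+(1-t)|A^*|^p\right)+\textit{ber}\left(t|A^*|^p+(1-t)|A|^p\right).
\]
The second entry has the same form with $B$ in place of $A$. The only remaining task is to show that this sum is at most $\textit{ber}\left(|A|^p+|A^*|^p\right)$.

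This comparison is the one step needing care, because the Berezin radius is merely subadditive and we need an upper bound for a sum of two radii. It works here because both operators involved are positive, so $\textit{ber}$ is the supremum of the Berezin transform with no absolute values. We also cannot just add the two suprema; we must bound the sum itself. The cleanest route is to go back into the proof of the theorem, before the supremum over $\lambda_2,\mu_1$ is taken separately. At the stage with the products
\[
\left\langle (tf^{2p}(|A|)+(1-t)f^{2p}(|A^*|))k_{\lambda_2},k_{\lambda_2}\right\rangle^{\frac12}\left\langle (tg^{2p}(|A^*|)+(1-t)g^{2p}(|A|))k_{\mu_1},k_{\mu_1}\right\rangle^{\frac12},
\]
apply the arithmetic--geometric mean inequality in the form $\sqrt{xy}\le \frac{x+y}{2}$. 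Bound each factor by its Berezin radius, applied to normalized kernels, and use $\max\{t,1-t\}\le 1$ on the mixed terms together with positivity. Each Berezin transform of $t|A|^p+(1-t)|A^*|^p$ is at most the Berezin transform of $|A|^p+|A^*|^p$ at the same point, since $(1-t)|A|^p+t|A^*|^p\ge 0$. Taking suprema then gives the entry $\textit{ber}\left(|A|^p+|A^*|^p\right)$ inside the maximum, and similarly for $B$. The factor $\frac{2^p}{4}$ is carried over unchanged, which gives the stated bound.

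As a check, I will compare with the case $t=\frac12$. There both terms equal $\frac12\textit{ber}\left(|A|^p+|A^*|^p\right)$, and their sum is exactly $\textit{ber}\left(|A|^p+|A^*|^p\right)$. So the corollary is the natural $t$-free form of the theorem, consistent with Corollary~\ref{bound}\,(i).
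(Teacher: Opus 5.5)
\textbf{There is a genuine gap, and it cannot be closed.} Your specialization $f(s)=g(s)=\sqrt{s}$ is the intended one, and you correctly isolate the crux. With $P=|A|^p$ and $Q=|A^*|^p$, you need $\textit{ber}(tP+(1-t)Q)+\textit{ber}(tQ+(1-t)P)\le \textit{ber}(P+Q)$. That inequality runs the wrong way. The two operators sum to $P+Q$, so subadditivity of $\textit{ber}$ gives $\textit{ber}(P+Q)\le \textit{ber}(tP+(1-t)Q)+\textit{ber}(tQ+(1-t)P)$. Equality is guaranteed only at $t=\frac12$.

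Reopening the proof does not help. After AM--GM you have $\frac12\bigl(\langle X_A\hat{k}_{\lambda_2},\hat{k}_{\lambda_2}\rangle+\langle Y_A\hat{k}_{\mu_1},\hat{k}_{\mu_1}\rangle\bigr)$ with $\lambda_2\neq\mu_1$. Nothing merges these into one Berezin transform of $P+Q$. Bounding each term separately by $\textit{ber}(P+Q)$, which is all your positivity remark gives, uses up the factor $\frac12$ that produced $\frac{2^p}{4}$ in the theorem. Carried out honestly, your route proves only $\frac{2^p}{2}\max\{\textit{ber}(|A|^p+|A^*|^p),\textit{ber}(|B|^p+|B^*|^p)\}$. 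The claim that $\frac{2^p}{4}$ ``is carried over unchanged'' is exactly where the argument fails.

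No other argument can close this gap, because the statement is false for $t\neq\frac12$. Take the following setup:
\begin{itemize}
\item $\Omega=\{1,2\}$ and $\mathcal{H}=\mathbb{C}^2$ with $k_j=e_j$;
\item $A=B=\begin{pmatrix}0&1\\0&0\end{pmatrix}$, $p=1$, and $\sigma_t=\nabla_t$.
\end{itemize}
Then $|A|+|A^*|=I$, so the right-hand side equals $\frac12$.

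Now use the paper's convention for kernels of $\mathcal{H}\oplus\mathcal{H}$, with $\hat{k}_{(2,2)}=\frac{1}{\sqrt2}(e_2,e_2)$ and $\hat{k}_{(1,1)}=\frac{1}{\sqrt2}(e_1,e_1)$. One checks $\langle T\hat{k}_{(2,2)},\hat{k}_{(1,1)}\rangle=1$ and $T^*\hat{k}_{(2,2)}=0$, so the left-hand side is at least $t$. Swapping the two kernels shows it is also at least $1-t$. Hence it exceeds $\frac12$ whenever $t\neq\frac12$.

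The theorem itself is consistent here: its right-hand side is $\max\{t,1-t\}$. So the stated bound follows from the theorem only at $t=\frac12$. For general $t$, two correct $t$-free versions are available:
\begin{itemize}
\item $\frac{2^p}{4}\max\{\textit{ber}(|A|^p)+\textit{ber}(|A^*|^p),\ \textit{ber}(|B|^p)+\textit{ber}(|B^*|^p)\}$, using $\textit{ber}(tP+(1-t)Q)\le t\,\textit{ber}(P)+(1-t)\,\textit{ber}(Q)$;
\item your weaker bound with constant $\frac{2^p}{2}$.
\end{itemize}
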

	Now we obtain an upper bound for an arbitrary $2 \times 2$ operator matrices.
\begin{theorem}
	Let $A,B,C,D \in B\mathcal{(H)}.$ Then 
	$$
	\left\|\begin{pmatrix}
	A & B \\
	C & D
	\end{pmatrix}\right\|_{ber_{\sigma_t}} \leq \left\|\begin{pmatrix}
	\left\|A\right\|_{ber_{\nabla_t}} & \left(t\left\| B\right\|^p_{ber}+(1-t)\left\|C\right\|^p_{ber}\right)^{\frac{1}{p}} \\
	\left(t\left\| C\right\|^p_{ber}+(1-t)\left\|B\right\|^p_{ber}\right)^{\frac{1}{p}} & \left\|D\right\|_{ber_{\nabla_t}}
	\end{pmatrix}\right\|.
	$$
\end{theorem}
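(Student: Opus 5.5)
The plan is to bound the quantity inside the supremum defining $\|T\|_{ber_{\sigma_t}}$, for $T=\begin{pmatrix} A & B\\ C & D\end{pmatrix}$, by a bilinear form of the $2\times 2$ nonnegative scalar matrix on the right-hand side. I will work as in the proofs of the preceding proposition and theorem on operator matrices. For $(\lambda_1,\lambda_2),(\mu_1,\mu_2)\in\Omega\times\Omega$, write $x=(k_{\lambda_1},k_{\lambda_2})$ and $y=(k_{\mu_1},k_{\mu_2})$ for the normalized kernels of $\mathcal{H}\oplus\mathcal{H}$. Then $\|k_{\lambda_1}\|^2+\|k_{\lambda_2}\|^2=1$ and similarly for $y$, and $T^*=\begin{pmatrix} A^* & C^*\\ B^* & D^*\end{pmatrix}$. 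Expanding gives
$$\langle Tx,y\rangle=\langle Ak_{\lambda_1},k_{\mu_1}\rangle+\langle Bk_{\lambda_2},k_{\mu_1}\rangle+\langle Ck_{\lambda_1},k_{\mu_2}\rangle+\langle Dk_{\lambda_2},k_{\mu_2}\rangle,$$
and the same expansion holds for $\langle T^*x,y\rangle$ with $A^*,C^*,B^*,D^*$ in those four positions.

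\emph{Step 1: pass to $\nabla_t$.} I will use $\sigma_t\le\nabla_t$. This hypothesis is implicit here, as in the preceding results, since the right-hand side involves $\|\cdot\|_{ber_{\nabla_t}}$. It gives
$$\left(|\langle Tx,y\rangle|^p\,\sigma_t\,|\langle T^*x,y\rangle|^p\right)^{1/p}\le\left(t|\langle Tx,y\rangle|^p+(1-t)|\langle T^*x,y\rangle|^p\right)^{1/p}.$$
The right-hand side is the norm $N(u,v)=(t|u|^p+(1-t)|v|^p)^{1/p}$ on $\mathbb{C}^2$, evaluated at the pair $(\langle Tx,y\rangle,\langle T^*x,y\rangle)$.

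\emph{Step 2: split by Minkowski.} Since $N$ is a norm for $p\ge1$, the four-term expansion splits $N(\langle Tx,y\rangle,\langle T^*x,y\rangle)$ into a sum of four terms. Each term pairs the $(i,j)$ entry of $T$ with the $(i,j)$ entry of $T^*$:
\begin{itemize}
\item[(a)] $N(\langle Ak_{\lambda_1},k_{\mu_1}\rangle,\langle A^*k_{\lambda_1},k_{\mu_1}\rangle)$. Normalizing the kernels, this is at most $\|A\|_{ber_{\nabla_t}}\|k_{\lambda_1}\|\|k_{\mu_1}\|$, by the definition with $\sigma=\nabla$.
\item[(b)] $N(\langle Bk_{\lambda_2},k_{\mu_1}\rangle,\langle C^*k_{\lambda_2},k_{\mu_1}\rangle)$. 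Using $|\langle Bk_{\lambda_2},k_{\mu_1}\rangle|\le\|B\|_{ber}\|k_{\lambda_2}\|\|k_{\mu_1}\|$ and $\|C^*\|_{ber}=\|C\|_{ber}$, this is at most $(t\|B\|^p_{ber}+(1-t)\|C\|^p_{ber})^{1/p}\|k_{\lambda_2}\|\|k_{\mu_1}\|$.
\item[(c)] The $C$-term, paired with $B^*$, is similarly at most $(t\|C\|^p_{ber}+(1-t)\|B\|^p_{ber})^{1/p}\|k_{\lambda_1}\|\|k_{\mu_2}\|$.
\item[(d)] The $D$-term is at most $\|D\|_{ber_{\nabla_t}}\|k_{\lambda_2}\|\|k_{\mu_2}\|$.
\end{itemize}

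\emph{Step 3: recognize a bilinear form.} Let $M$ be the nonnegative matrix on the right-hand side of the statement, and set $\xi=(\|k_{\lambda_1}\|,\|k_{\lambda_2}\|)$ and $\eta=(\|k_{\mu_1}\|,\|k_{\mu_2}\|)$, which are unit vectors in $\mathbb{R}^2$. The sum of the four bounds is exactly $\langle M\xi,\eta\rangle$, which is at most $\|M\|$. Taking the supremum over all $(\lambda_1,\lambda_2),(\mu_1,\mu_2)$ then yields the claim.

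The main obstacle I expect is Step 2. One must check that Minkowski's inequality applies to the weighted $\ell^p$ norm $N$; it does, being the $\ell^p$ norm of $(t^{1/p}u,(1-t)^{1/p}v)$. One must also handle the cross entries correctly. In the off-diagonal positions $T$ contributes $B$ while $T^*$ contributes $C^*$, so these entries cannot be combined into a $\nabla_t$-Berezin norm of a single operator. This is why only the cruder weighted combinations of $\|B\|_{ber}$ and $\|C\|_{ber}$ appear there. Finally, the bounds with unnormalized kernels $k_{\lambda_i}$ follow by homogeneity from the normalized ones, so the case where some $k_{\lambda_i}=0$ is trivial.
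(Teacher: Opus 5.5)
Your proposal is correct and follows the paper's proof essentially step for step. You pass from $\sigma_t$ to $\nabla_t$, split by Minkowski into the four pairs $(A,A^*)$, $(B,C^*)$, $(C,B^*)$, $(D,D^*)$, bound each by the matching entry times $\|k_{\lambda_i}\|\|k_{\mu_j}\|$, and conclude with $\langle M\xi,\eta\rangle\le\|M\|$. You are also right that the hypothesis $\sigma_t\le\nabla_t$ is missing from the statement, and the paper's own proof uses it silently as well.
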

\begin{proof}
	Let $T=	\begin{pmatrix}
	A & B \\
	C & D		
	\end{pmatrix}$.
	For any $(\lambda_1,\lambda_2),(\mu_1,\mu_2) \in \Omega  \times  \Omega ,$ let $\hat{k}_{(\lambda_1,\lambda_2)}=(k_{\lambda_1},k_{\lambda_2}),\\ \hat{k}_{(\mu_1,\mu_2)}=(k_{\mu_1},k_{\mu_2}) $ be two normalised reproducing kernels in $\mathcal{H}\oplus \mathcal{H}.$ Then we have 
\begin{equation*}
\begin{split}
	&\left(\left|\left\langle T\hat{k}_{(\lambda_1,\lambda_2)}, \hat{k}_{(\mu_1,\mu_2)} \right\rangle\right|^p 
	~\sigma_t~ 
	\left|\left\langle T^*\hat{k}_{(\lambda_1,\lambda_2)}, \hat{k}_{(\mu_1,\mu_2)} \right\rangle\right|^p
	\right)^{\frac{1}{p}}\\
	&\quad = \bigg(\left|\left\langle A k_{\lambda_1},k_{\mu_1}\right\rangle+\left\langle B k_{\lambda_2},k_{\mu_1}\right\rangle+\left\langle C k_{\lambda_1},k_{\mu_2}\right\rangle+\left\langle D k_{\lambda_2},k_{\mu_2}\right\rangle\right|^p \\ 
	&\qquad  \sigma_t~ \left|\left\langle A^* k_{\lambda_1},k_{\mu_1}\right\rangle+\left\langle C^* k_{\lambda_2},k_{\mu_1}\right\rangle+\left\langle B^* k_{\lambda_1},k_{\mu_2}\right\rangle+\left\langle D^* k_{\lambda_2},k_{\mu_2}\right\rangle\right|^p \bigg)^{\frac{1}{p}}\\
	&\quad\leq \bigg(t\big(\left|\left\langle A k_{\lambda_1},k_{\mu_1}\right\rangle+\left\langle B k_{\lambda_2},k_{\mu_1}\right\rangle+\left\langle C k_{\lambda_1},k_{\mu_2}\right\rangle+\left\langle D k_{\lambda_2},k_{\mu_2}\right\rangle\right|^p \big)\\ 
	&\qquad +(1-t) \big(\left|\left\langle A^* k_{\lambda_1},k_{\mu_1}\right\rangle+\left\langle C^* k_{\lambda_2},k_{\mu_1}\right\rangle+\left\langle B^* k_{\lambda_1},k_{\mu_2}\right\rangle+\left\langle D^* k_{\lambda_2},k_{\mu_2}\right\rangle\right|^p\big) \bigg)^{\frac{1}{p}}\\
	&\quad \leq \left(t\left|\left\langle A k_{\lambda_1},k_{\mu_1}\right\rangle\right|^p+(1-t)\left|\left\langle A^* k_{\lambda_1},k_{\mu_1}\right\rangle\right|^p\right)^{\frac{1}{p}}+\left(t\left|\left\langle B k_{\lambda_2},k_{\mu_1}\right\rangle\right|^p+(1-t)\left|\left\langle C^* k_{\lambda_2},k_{\mu_1}\right\rangle\right|^p\right)^{\frac{1}{p}}\\
	&\qquad +\left(t\left|\left\langle C k_{\lambda_1},k_{\mu_2}\right\rangle\right|^p+(1-t)\left|\left\langle B^* k_{\lambda_1},k_{\mu_2}\right\rangle\right|^p\right)^{\frac{1}{p}}+\left(t\left|\left\langle D k_{\lambda_2},k_{\mu_2}\right\rangle\right|^p+(1-t)\left|\left\langle D^* k_{\lambda_2},k_{\mu_2}\right\rangle\right|^p \right)^{\frac{1}{p}}\\
	&\quad \leq \left\|A\right\|_{ber_{\nabla_t}}\left\|k_{\lambda_1}\right\|\left\|k_{\mu_1}\right\|+\left\|D\right\|_{ber_{\nabla_t}}\left\|k_{\lambda_2}\right\|\left\|k_{\mu_2}\right\|\\
	&\qquad +\left(t\left\| B\right\|^p_{ber}+(1-t)\left\|C\right\|^p_{ber}\right)^{\frac{1}{p}} \left\|k_{\lambda_2}\right\|\left\|k_{\mu_1}\right\|+\left(t\left\| C\right\|^p_{ber}+(1-t)\left\|B\right\|^p_{ber}\right)^{\frac{1}{p}} \left\|k_{\lambda_1}\right\|\left\|k_{\mu_2}\right\|\\
	&\quad = \left\langle \begin{pmatrix}
	\left\|A\right\|_{ber_{\nabla_t}} & \left(t\left\| B\right\|^p_{ber}+(1-t)\left\|C\right\|^p_{ber}\right)^{\frac{1}{p}} \\
	\left(t\left\| C\right\|^p_{ber}+(1-t)\left\|B\right\|^p_{ber}\right)^{\frac{1}{p}} & \left\|D\right\|_{ber_{\nabla_t}}
	\end{pmatrix}\begin{pmatrix}
	\left\|k_{\lambda_1}\right\| \\
	\left\|k_{\lambda_2}\right\|
	\end{pmatrix},\begin{pmatrix}
	\left\|k_{\mu_1}\right\| \\
	\left\|k_{\mu_2}\right\|
	\end{pmatrix}\right\rangle \\
	&\quad \leq \left\|\begin{pmatrix}
	\left\|A\right\|_{ber_{\nabla_t}} & \left(t\left\| B\right\|^p_{ber}+(1-t)\left\|C\right\|^p_{ber}\right)^{\frac{1}{p}} \\
	\left(t\left\| C\right\|^p_{ber}+(1-t)\left\|B\right\|^p_{ber}\right)^{\frac{1}{p}} & \left\|D\right\|_{ber_{\nabla_t}}
	\end{pmatrix}\right\|.
\end{split}
\end{equation*}
   Therefore, taking the supremum over all $\lambda ,\mu \in \Omega,$ we obtain the desired result.
\end{proof}  
   The next result gives a characterization of $2 \times 2$ diagonal invertible operator matrices that are unitary.
\begin{theorem}
    Suppose
    $
    T=
    \begin{pmatrix}
   	A & 0 \\
    0 & B
    \end{pmatrix}
    \in {B}(\mathcal{H}\oplus\mathcal{H})
    $
    is invertible. Then $T$ is unitary if and only if
    $
    \max\left\{
    \textit{ber}(A^*A),\,
    \textit{ber}(B^*B)
    \right\}\leq 1
    \quad \text{and} \quad
    \max\left\{
    \textit{ber}\big((A^*A)^{-1}\big),\,
    \textit{ber}\big((B^*B)^{-1}\big)
    \right\}\leq 1.
    $
\end{theorem}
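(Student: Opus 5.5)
The plan is to reduce the statement to the single-operator characterization of Bhunia et al.~\cite[Theorem 5]{bhunia2023some}, recalled before the earlier unitary theorem: an invertible $X\in B(\mathcal{H})$ is unitary if and only if $\textit{ber}(X^*X)\le 1$ and $\textit{ber}\big((X^*X)^{-1}\big)\le 1$. Since $T$ is invertible and block diagonal, both $A$ and $B$ are invertible. Indeed, $T^{-1}$ must commute with the projections onto the two summands: write $T^{-1}=\begin{pmatrix} E & F\\ G & H\end{pmatrix}$. Then $TT^{-1}=T^{-1}T=I$ gives $AE=EA=I$, $BH=HB=I$, and $AF=0$, $BG=0$, so $F=G=0$ once $A$ and $B$ are known to be injective. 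Injectivity follows from $EA=I$ and $HB=I$. Thus $T^{-1}=\mathrm{diag}(A^{-1},B^{-1})$. Moreover $T$ is unitary if and only if $T^*T=I$, which holds exactly when $A^*A=I$ and $B^*B=I$, that is, when both $A$ and $B$ are unitary.

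For necessity, if $T$ is unitary then $A^*A=I=B^*B$ and their inverses are $I$. Every Berezin symbol of $I$ equals $1$, so all four Berezin radii equal $1$.

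For sufficiency, the hypotheses give $\textit{ber}(A^*A)\le1$ and $\textit{ber}((A^*A)^{-1})\le1$, and likewise for $B$. Since $A$ and $B$ are invertible, \cite[Theorem 5]{bhunia2023some} shows that $A$ and $B$ are unitary, and hence $T$ is unitary. To make this self-contained, and to mirror the argument of the earlier unitary theorem, I would instead compute directly for each normalized kernel $\hat{k}_\lambda$:
\[
\|(A-A^{*-1})\hat{k}_\lambda\|^2=\langle A^*A\hat{k}_\lambda,\hat{k}_\lambda\rangle+\langle (A^*A)^{-1}\hat{k}_\lambda,\hat{k}_\lambda\rangle-2,
\]
using $\langle A\hat{k}_\lambda, A^{*-1}\hat{k}_\lambda\rangle=\langle A^{-1}A\hat{k}_\lambda,\hat{k}_\lambda\rangle=1$. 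The right side is at most $0$ by hypothesis, so $A\hat{k}_\lambda=A^{*-1}\hat{k}_\lambda$ for every $\lambda$. Because the reproducing kernels span a dense subspace, $A=A^{*-1}$, i.e.\ $A^*A=I$. The same argument applies to $B$.

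The main obstacle is a subtlety about where the Berezin radius is computed. If the ``ber'' of a block operator on $\mathcal{H}\oplus\mathcal{H}$ were meant with kernels $(k_{\lambda_1},k_{\lambda_2})$ as in the earlier proofs, one would need to check that those mixed kernels still span densely. Since the hypothesis is stated blockwise on $\mathcal{H}$, this does not arise: each block is handled separately, and density of $\{k_\lambda\}$ in $\mathcal{H}$ is standard, because $f\perp k_\lambda$ for all $\lambda$ forces $f\equiv0$. The only other point needing care is the invertibility of the diagonal blocks, handled above.
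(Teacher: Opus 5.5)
Your proof is correct, but it runs in the opposite direction from the paper's.

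\textbf{The paper's route.} It never descends to the blocks. It lifts the hypotheses to $T$ itself, using $\textit{ber}(T^*T)\le\max\{\textit{ber}(A^*A),\textit{ber}(B^*B)\}$ and the analogous bound for $(T^*T)^{-1}$. These are computed with the paired vectors $\hat{k}_{(\lambda_1,\lambda_2)}=(k_{\lambda_1},k_{\lambda_2})$ on $\mathcal{H}\oplus\mathcal{H}$. It then runs the identity $\|(T-T^{*-1})\hat{k}\|^2=\langle T^*T\hat{k},\hat{k}\rangle+\langle (T^*T)^{-1}\hat{k},\hat{k}\rangle-2$ for $T$, exactly as in the earlier unitary theorem.

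\textbf{Your route.} You push the conclusion down instead. You verify that $A$ and $B$ are invertible with $T^{-1}=\mathrm{diag}(A^{-1},B^{-1})$, and that $T$ is unitary if and only if $A$ and $B$ are. You then apply the same single-operator identity, or \cite[Theorem 5]{bhunia2023some}, to each block on $\mathcal{H}$.

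\textbf{What each approach buys.} Your version stays where the Berezin radius is standard and needs only the density of $\{k_\lambda\}$ in $\mathcal{H}$. It also supplies the invertibility of $A$ and $B$, which the statement tacitly needs for $(A^*A)^{-1}$ to make sense. It extends verbatim to block-diagonal operators with more summands. The paper's version is shorter on the page and gives the direct-sum estimate for $\textit{ber}(T^*T)$ as a by-product.

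However, the paper's final step (``this gives $T^{-1}=T^*$'') is not a pure density argument. The vectors $(k_{\lambda_1},k_{\lambda_2})$, or their normalizations, need not span a dense subspace of $\mathcal{H}\oplus\mathcal{H}$. For example, in $H^2\oplus H^2$ the vector $(1,-1)$ is orthogonal to all of them, since $\langle (1,-1),(k_{\lambda_1},k_{\lambda_2})\rangle=1-1=0$. So that step really relies on $T-T^{*-1}$ being block diagonal, which is precisely your reduction. Your caution about the mixed kernels is therefore well placed.
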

\begin{proof}
	Suppose that $T$ is unitary. Then $A$ and $B$ are unitary, which implies
    $
	\textit{ber}(A^*A)\!=1,$
	$\textit{ber}\big((A^*A)^{-1}\big)=1,
	$
	and
	$
    \textit{ber}(B^*B)=1,
	\textit{ber}\big((B^*B)^{-1}\big)=1.
	$
	Conversely, suppose that
	$
	\max\left\{
	\textit{ber}(A^*A),\,
	\textit{ber}(B^*B)
	\right\}\leq 1
	$
	and
	$
	\max\left\{
    \textit{ber}\big((A^*A)^{-1}\big),
	\textit{ber}\big((B^*B)^{-1}\big)
	\right\}\leq 1.
	$
	We compute
	$
	\big\|(T-T^{-1^*})\hat{k}_{(\lambda_1,\lambda_2)}\big\|^2.
	$
	Using the inequalities
	$
	\textit{ber}(T^*T)
	\leq
	\max\left\{
	\textit{ber}(A^*A),
	\textit{ber}(B^*B)
	\right\}\\
	\leq 1
	$
	and
	$
	\textit{ber}\big((T^*T)^{-1}\big)
	\leq
	\max\left\{
	\textit{ber}\big((A^*A)^{-1}\big),\,
	\textit{ber}\big((B^*B)^{-1}\big)
	\right\}
	\leq 1,
	$
	we obtain
	$
	\big\|(T-T^{-1^*})\hat{k}_{(\lambda_1,\lambda_2)}\big\|=0 \text{ for all } (\lambda_1,\lambda_2)\in\Omega\times\Omega.
	$
	This gives $T^{-1}=T^*$.
\end{proof}
\section{convexity of the berezin range}
    In this section, we study the convexity of the Berezin range of composition operators and finite rank operators on weighted Hardy space and Fock space.
     
     We compute the Berezin range of the composition operators and finite rank operators on weighted Hardy space corresponding to different weight sequences and investigate conditions under which their Berezin ranges are convex. Furthermore, we examine the convexity of the Berezin range of the composition operators on Fock space over $\mathbb{C}^n$ with  symbol $\phi(z)=Az,$ where $A$ is a scalar matrix of order $n$ and $z=(z_1,z_2,...,z_n)$.
     
    \subsection{On weighted Hardy space}
     
    Let $(\beta_n)_{n \ge 0}$ be a sequence such that $\beta_0 = 1$, 
    $\beta_n > 0$ for all $n$, and 
    $
    \liminf_{n \to \infty} \beta_n^{1/n} \ge 1.
    $
    Then the weighted Hardy space $H^2(\beta)$ is defined by
    $$
    H^2(\beta) 
    = \left\{
    f(z) = \sum_{n=0}^{\infty} a_n z^n 
    \; : \;
    \sum_{n=0}^{\infty} |a_n|^2 \beta_n^2 < \infty
    \right\}.
    $$
    The inner product on $H^2(\beta)$ is given by
    $$
    \langle f, g \rangle 
    = \sum_{n=0}^{\infty} a_n \overline{b_n}\,\beta_n^2,
    $$
    where $f(z) = \sum_{n=0}^{\infty} a_n z^n$ and 
    $g(z) = \sum_{n=0}^{\infty} b_n z^n$.
     
    It is well known that $H^2(\beta)$ is a reproducing kernel 
    Hilbert space consisting of analytic functions on $\mathbb{D}$, 
    with reproducing kernel
    $$
    k_w(z) 
    = \sum_{n=0}^{\infty} \frac{\overline{w}^{\,n}}{\beta_n^2}\, z^n.
    $$
       
    By choosing different weight sequences, we get different function spaces. For example, the classical Hardy space, the classical Bergman space and the classical Dirichlet space are weighted Hardy spaces with weights $\beta_n=1, ~\beta_n=(n+1)^{-\frac{1}{2}}$ and $\beta_n=(n+1)^{\frac{1}{2}}$ respectively.  
    See \cite[Chapter 2]{cowen1995composition} for more details.
     
    Let  $\phi : \mathbb{D}\rightarrow \mathbb{D}$ be a holomorphic function. A composition operator $C_\phi$ acting on ${H}^{2}(\beta)$ is defined by $ C_\phi f := f \circ \phi$ is bounded. The Berezin transform of $C_{\phi}$ on $w$ is given by
    $$
    \widetilde{C_{\phi}}(w) = \frac{1}{\|k_w\|^2} \langle C_{\phi} k_w, k_w \rangle
    = \frac{1}{\|k_w\|^2} k_w(\phi(w))
    =\frac{\displaystyle \sum_{n=0}^{\infty} \frac{\overline{w}^n\phi(w)^n}{\beta_n^2}}{\displaystyle \sum_{n=0}^{\infty} \frac{|w|^{2n}}{\beta_n^2}}.
    $$
    
    If we consider $\beta_n = \left( \binom{n + k - 1}{n} \right)\!^{-\frac{1}{2}}$ for a fixed $k \geq 1$, then
    $$
    k_w(z)  = \sum_{n=0}^\infty  \binom{n + k - 1}{n} \overline{w}^n z^n = \frac{1}{(1-\overline{w}z)^k}.
    $$
    When $k=1,$ this is  the classical Hardy space and when  $k>1,$ this is the weighted Bergman space.  The convexity of the Berezin range of operators on spaces associated with this kernel has already been studied in the literature. In particular, the convexity of the Berezin range of finite rank operators on this space is discussed in \cite{finiterank}, while the convexity of the Berezin range of composition operators is investigated in \cite{anirben}.
     
    If we consider $\beta_n^2 = (\frac{1}{\beta})^n$ for $\beta \in (0,1]$, then
    $$
    k_w(z)=\sum_{n=0}^{\infty}\beta^n\overline{w}^n z^n=\frac{1}{1-\beta\overline{w}z}.
    $$
    From here onward, all our discussions take place in the weighted Hardy space with  weight $\beta_n^2 = (\frac{1}{\beta})^n$.
       
    The Berezin transform of the composition operator associated with this kernel is given by
    $$
    \widetilde{C_{\phi}}(w) =  \frac{1-|w|^2\beta}{1-\overline{w}\phi(w) \beta}.
    $$
    Let $\phi(w) = \eta w$ be a holomorphic self map on $\mathbb{D}$, 
    where $\eta \in \mathbb{\overline{D}}$ and $w \in \mathbb{D}$. Then
    $$
    \widetilde{C_{\phi}}(w) = 
    \frac{1-|w|^2\beta}{1-|w|^2 \eta \beta}.
    $$
    The Berezin range of these operators are not always convex (see Figure \ref{figure3}). Here we characterise the convexity of  $\textit{Ber}(C_\phi)$. 

\begin{figure}[H]
	\centering
	\includegraphics[scale=0.5]{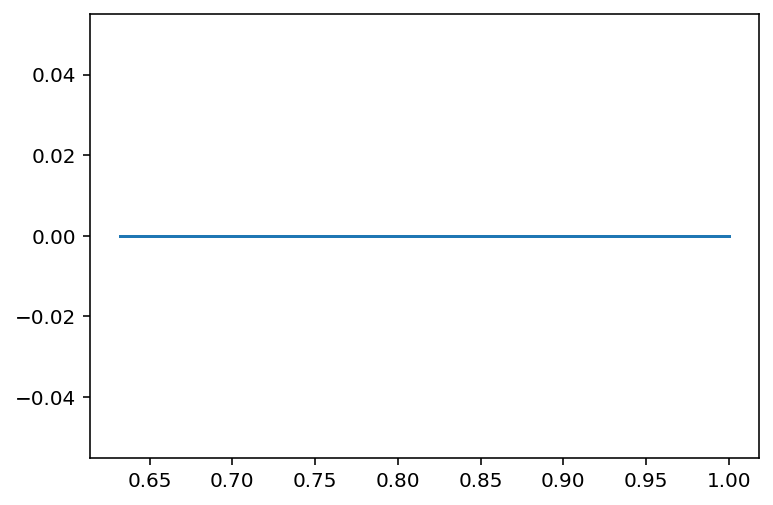}
	\includegraphics[scale=0.5]{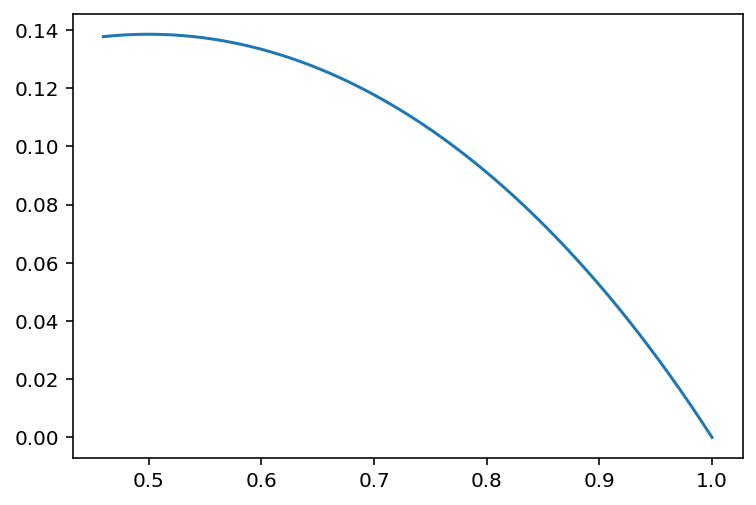}
	\caption{$\textit{Ber}(C_{\phi})$ on $H^2(\beta)$ for $\eta=-0.75$ , $\beta=0.25$ (left, apparently  convex) and $\eta=0.6i$ , $\beta=0.5$ (right, apparently not convex)}
	\label{figure3}
\end{figure}
\begin{theorem}\label{4.1}
    Let $ C_\phi \in {B}(H^2(\beta)) $ be such that $\phi(w) = \eta w$ with $\eta \in \mathbb{\overline{D}}$ and $w \in \mathbb{D}$.   Then $\textit{Ber}(C_\phi)$ is convex if and only if  $\eta \in [-1,1]$.
\end{theorem}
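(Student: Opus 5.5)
The Berezin transform depends on $w$ only through $r=|w|^2\in[0,1)$. So the Berezin range is the image of the curve
$$
\gamma(r)=\frac{1-\beta r}{1-\beta\eta r},\qquad r\in[0,1),
$$
that is, $\textit{Ber}(C_\phi)=\gamma([0,1))$. The plan is to identify this curve explicitly. The map $z\mapsto \frac{1-z}{1-\eta z}$ is a Möbius transformation (or affine, when $\eta=0$). It is evaluated along the real segment $z=\beta r\in[0,\beta)$, and a Möbius map sends a segment to an arc of a line or of a circle. Convexity of the range then reduces to deciding when this arc lies on a straight line: a nondegenerate arc of a genuine circle is never convex, while a connected subset of a line is.

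\emph{Sufficiency.} Assume $\eta\in[-1,1]$. Then $\gamma(r)$ is real for every $r$ and continuous in $r$. Since $[0,1)$ is connected, $\gamma([0,1))$ is an interval in $\mathbb{R}$, hence convex. The degenerate case $\eta=1$ gives the singleton $\{1\}$, which is trivially convex. No pole occurs, because $|\beta\eta r|<1$.

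\emph{Necessity.} Suppose $\eta\notin\mathbb{R}$. I will show that three distinct points of the curve are not collinear, which rules out convexity of a curve.

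\textbf{Step 1: the curve lies on a circle.} Write $\gamma(r)=1+\frac{(\eta-1)\beta r}{1-\beta\eta r}$. Dividing numerator and denominator by $\beta r$ (for $r>0$) gives
$$
\gamma(r)=1+\frac{\eta-1}{s-\eta},\qquad s=\frac{1}{\beta r}\in(1/\beta,\infty)\subset\mathbb{R}.
$$
The map $s\mapsto \frac{1}{s-\eta}$ sends the real line to a circle through $0$, because $\eta\notin\mathbb{R}$ and the pole $s=\eta$ is not on the real line. Multiplying by $\eta-1\neq0$ and translating preserves circles. Hence the curve lies on a genuine circle $\mathcal{C}$ passing through $1=\gamma(0)$, which is the image of $s=\infty$.

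\textbf{Step 2: the image is a nondegenerate arc.} As $r$ runs over $[0,1)$, $s$ runs over $(1/\beta,\infty]$, and the map is injective. So the range is an arc of $\mathcal{C}$ containing more than one point.

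\textbf{Step 3: a nondegenerate arc is not convex.} Take two distinct points $a,b$ of the arc. Their midpoint lies strictly inside the disc bounded by $\mathcal{C}$, so it is not on $\mathcal{C}$, and therefore not in $\textit{Ber}(C_\phi)$.

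The main obstacle is keeping the Möbius-circle argument clean and handling the edge cases. The case $\eta=0$ is real, so it belongs to the sufficiency part. The case $|\eta|=1$ with $\eta\neq\pm1$ still has no pole on $[0,1/\beta]$, since $\beta r<1$ there. The point to verify is that $\mathcal{C}$ is a true circle and not a line; this holds because the pole $\eta$ lies off the real axis.

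As a fallback, one can argue concretely instead. Compute $\operatorname{Im}\gamma(r)=\frac{\beta r(1-\beta r)\operatorname{Im}\eta}{|1-\beta\eta r|^2}$. This is nonzero for $r\in(0,1)$ and tends to $0$ as $r\to0$, while $\gamma(0)=1$. Then pick $r_1\neq r_2$ and check directly that the midpoint $\tfrac12(\gamma(r_1)+\gamma(r_2))$ is not of the form $\gamma(r)$.
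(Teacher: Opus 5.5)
Your proof is correct. It shares the paper's starting point, the reduction to the curve $r\mapsto\frac{1-\beta r}{1-\beta\eta r}$, and its sufficiency half: the paper writes the interval $(\frac{1-\beta}{1-\eta\beta},1]$ explicitly, while you use connectedness. For necessity you take a genuinely different route.

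The paper first asserts that a convex ``path'' must be a point or a line segment. It then imposes collinearity of three points: $1$, the boundary value $\frac{1-\beta}{1-\beta\eta}$ (a limit as $r\to1^-$, never attained), and an arbitrary $\frac{1-\beta\rho^2}{1-\beta\rho^2\eta}$. It expands the collinearity determinant in real coordinates $\eta=a+ib$ until it reduces to $b=0$ or $a=1$.

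You instead write $\gamma(r)=1+\frac{\eta-1}{s-\eta}$ with $s=1/(\beta r)$ real. For $\eta\notin\mathbb{R}$, the range is then an injective M\"obius image of a real ray whose pole lies off the real axis, so it is a nondegenerate arc of a genuine circle through $1$. Strict convexity of the disc shows that the midpoint of any two distinct points of the range is missing.

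What your route buys:
\begin{itemize}
\item It produces an explicit witness of non-convexity from attained values only.
\item It does not need the unproved step ``convex path $\Rightarrow$ point or segment.'' That step is true here because the curve is smooth, but it fails for merely continuous curves, since a space-filling curve can have convex image.
\item It does not use the non-attained endpoint.
\item It avoids the coordinate algebra, which is error-prone. Carrying the paper's determinant through carefully gives the factor $b\,\beta^{3}(\rho^2-\rho^4)\,|\eta-1|^2$, not the stated $b(a-1)\beta^{3}(\rho^2-\rho^4)$, although the conclusion $b=0$ survives.
\end{itemize}

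The paper's computation, in turn, is entirely elementary and follows the same template it later reuses for the Fock space theorem. Your argument additionally identifies the shape of $\textit{Ber}(C_\phi)$ as a circular arc through $1$ whenever $\eta\notin\mathbb{R}$.
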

\begin{proof}
    Suppose that \(\eta=1\), then \( \phi(w)=w\). Putting \( w=re^{i\theta}\) for \(0 \leq r < 1\), we get
    \[
    \widetilde{C_{\phi}}(w) = \frac{1-r^2\beta}{1-r^2 
    \beta}=1.
    \]
    So \(\textit{Ber}(C_\phi)\)=\{1\}, which is convex. Similarly, for \(\phi(w)=\eta w\), where $-1 \leq \eta < 1,$ we obtain
    \[
    \textit{Ber}(C_\phi) = \left\{ \frac{1 - r^2 \beta}{1 - r^2 \eta  \beta} : r \in \left[0, 1\right) \right\} = \left( \frac{1 - \beta}{1 -\eta  \beta}, 1 \right],
    \]
    which is also convex.
     	
    Conversely, suppose that \(\textit{Ber}(C_\phi)\) is convex. We have to show that $\eta \in [-1,1]$. We have 
    \[
    \widetilde{C_{\phi}}(re^{i\theta})  = \frac{1-|r|^2\beta}{1-|r|^2 
     		\eta \beta}     \qquad  r \in [0,1),
    \]
    which is independent of \(\theta\). Therefore \(\textit {Ber}(C_\phi)\) is a path in \(\mathbb{C}\). The convexity implies that it can be either a point or a line segment. It is easy to observe that  \(\textit {Ber}(C_\phi)\) is a point if and only if \(\eta=1\). Now, consider \(\textit {Ber}(C_\phi)\) as a line segment. Note that \(\widetilde{C_{\phi}}(0)=1\) and \(\lim_{r \to 1^{-}} \widetilde{C_{\phi}}(re^{i\theta}) = \frac{1-\beta}{1-\beta\eta}\). Since \(\beta \not=0\) these two are distinct points in \(\textit {Ber}(C_\phi)\). The convexity of \(\textit {Ber}(C_\phi)\) implies that the points 1, \(\frac{1-\beta}{1-\beta\eta}\) and an arbitrary point \(\frac{1-\beta\rho^2}{1-\beta\rho^2\eta}\) for \(\rho \in [0,1)\) are collinear. In the \(\mathbb{R}^2\) plane these points can be seen as  \((1,0),\left(\frac{(1-\beta)(1-\beta a)}{(1-\beta a)^2+(\beta b)^2},\frac{(1-\beta)\beta b}{(1-\beta a)^2+(\beta b)^2}\right)\) and \( \left( \frac{(1-\beta\rho^2)(1-\beta\rho^2a)}{(1-\beta\rho^2a)^2+(\beta\rho^2b)^2},\frac{(1-\beta\rho^2)\beta\rho^2b}{(1-\beta\rho^2a)^2+(\beta\rho^2)^2}\right)\) respectively, where \(\eta=a+ib\). From collinearity, we have
    $$
    1  \left( \frac{(1 - \beta) \beta b}{(1 - \beta a)^2 + (\beta b)^2} - \frac{(1 - \beta \rho^2) \beta \rho^2 b}{(1 - \beta \rho^2 a)^2 + (\beta \rho^2 b)^2} \right) 
    + \frac{(1 - \beta)(1 - \beta a)}{(1 - \beta a)^2 + (\beta b)^2}  \left( \frac{(1 - \beta \rho^2) \beta \rho^2 b}{(1 - \beta \rho^2 a)^2 + (\beta \rho^2 b)^2} \right) $$
    $$
    + \frac{(1 - \beta \rho^2)(1 - \beta \rho^2 a)}{(1 - \beta \rho^2 a)^2 + (\beta \rho^2 b)^2}  \left( - \frac{(1 - \beta) \beta b}{(1 - \beta a)^2 + (\beta b)^2} \right) = 0,
    $$
    which implies
    $$
    \frac{ \big((1-\beta)\beta b\big)\big((1-\beta \rho^2 a)^2+(\beta \rho^2 b)^2\big) - \big((1-\beta \rho^2)\beta \rho^2 b\big)\big((1-\beta a)^2+(\beta b)^2\big)  }{ \big((1-\beta a)^2+(\beta b)^2\big)\big((1-\beta \rho^2 a)^2+(\beta \rho^2 b)^2\big) } $$
    $$
    + \frac{(1-\beta)(1-\beta a )(1-\beta \rho^2)(\beta \rho^2 b)}{ \big((1-\beta a)^2+(\beta b)^2\big)\big((1-\beta \rho^2 a)^2+(\beta \rho^2 b)^2\big) } 
    + \frac{(1-\beta \rho^2)(1-\beta \rho^2 a)(\beta - 1)(\beta b)}{ \big((1-\beta a)^2+(\beta b)^2\big)\big((1-\beta \rho^2 a)^2+(\beta \rho^2 b)^2\big) } = 0.
    $$
    Simplifying, we get
    $$
    \frac{2\big(\beta^3\rho^2ab-\beta^3\rho^4ab-\beta^3\rho^2b+\beta^3\rho^4b\big)}{\big((1-\beta a)^2+(\beta b)^2\big)\big((1-\beta \rho^2a)^2+(\beta \rho^2 b)^2\big)}=0,
    $$
    implies
    $$
    b\big(a(\beta^3\rho^2-\beta^3\rho^4)-(\beta^3\rho^2-\beta^3\rho^4)\big)=0.
    $$
    This implies that either \(b=0\) or \(a=1\). In both cases the imaginary part of \(\eta\) is 0. Therefore \(\eta \in[-1,1]\). This completes the proof.
\end{proof}
     
    For elliptic automorphism \(\phi(w) = \eta w\) with \(\eta \in \mathbb{T}\), we have the following result:
\begin{corollary}
    Let \( C_\phi \in {B}(H^2(\beta)) \) be such that \(\phi(w) = \eta w\) with \(\eta \in \mathbb{T}\) and \(w \in \mathbb{D}\).   Then \(\textit{Ber}(C_\phi)\)  is convex if and only if  \(\eta=1\) or \(\eta=-1\).
\end{corollary}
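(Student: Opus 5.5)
This corollary is a direct specialization of Theorem \ref{4.1}, so the plan is to restrict its hypothesis and intersect the resulting condition with the unit circle. Since $\mathbb{T}\subset\overline{\mathbb{D}}$, every symbol $\phi(w)=\eta w$ with $\eta\in\mathbb{T}$ is a holomorphic self-map of $\mathbb{D}$ of the form covered by Theorem \ref{4.1}. That theorem then says $\textit{Ber}(C_\phi)$ is convex if and only if $\eta\in[-1,1]$. The statement reduces to the elementary set identity $[-1,1]\cap\mathbb{T}=\{-1,1\}$: a real number of modulus one is $\pm 1$, and both $\pm1$ lie in $[-1,1]$.

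For completeness, I would record the two cases explicitly, using the formula $\widetilde{C_{\phi}}(w)=\frac{1-|w|^2\beta}{1-|w|^2\eta\beta}$ from the text preceding Theorem \ref{4.1}.
\begin{itemize}
\item For $\eta=1$, $\textit{Ber}(C_\phi)=\{1\}$, which is convex.
\item For $\eta=-1$, $\textit{Ber}(C_\phi)=\left(\frac{1-\beta}{1+\beta},1\right]$, an interval and hence convex.
\end{itemize}
For any other $\eta\in\mathbb{T}$ we have $\operatorname{Im}\eta\neq0$ (because $a=1$ forces $b=0$ on the circle). The collinearity argument in the proof of Theorem \ref{4.1} then rules out convexity.

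There is no real obstacle here. The only point to check is that Theorem \ref{4.1} was proved for all $\eta\in\overline{\mathbb{D}}$, including the boundary, which it was. Boundedness of $C_\phi$ on $H^2(\beta)$ is already assumed in the statement.
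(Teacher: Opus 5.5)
Your proposal is correct and follows the paper's route exactly. The paper states this corollary without a separate proof, as the specialization of Theorem~\ref{4.1} to $\eta\in\mathbb{T}\subset\overline{\mathbb{D}}$ combined with $[-1,1]\cap\mathbb{T}=\{-1,1\}$. Your explicit ranges, $\{1\}$ for $\eta=1$ and $\left(\frac{1-\beta}{1+\beta},1\right]$ for $\eta=-1$, are also right, but the parenthetical ``$a=1$ forces $b=0$'' is unnecessary: every $\eta\in\mathbb{T}\setminus\{\pm1\}$ is non-real simply because $\pm1$ are the only real points of $\mathbb{T}$.
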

\begin{remark}
    By setting $\beta=1$ in Theorem~\ref{4.1}, we get \cite[Theorem~4.1]{augustine2023composition}, which states that if $\eta \in \overline{\mathbb{D}}$ and $\phi(w)=\eta w$, then the Berezin range of $C_{\phi}$ on $H^2(\mathbb{D})$ is convex if and only if $-1 \leq \eta \leq 1$.
\end{remark}
    \textbf{Blaschke Factor:} Consider the automorphism on the unit disc,  known as the Blaschke factor:
    \[
    \phi_\alpha(z) = \frac{z - \alpha}{1 - \overline{\alpha} z},
    \]
    where \(\alpha\) \(\in\) \(\mathbb{D}\).
    We have
    $$
    \widetilde{C}_{\phi_\alpha}(w) = \frac{1 - |w|^2\beta}{1 - \overline{w} \phi_\alpha(w)\beta}
    =\frac{(1 - |w|^2 \beta)(1 - \overline\alpha {w})}{1 - \overline\alpha {w} - |w|^2 \beta+  \alpha \overline{w} \beta}.
    $$
\begin{proposition}
    For the composition operator $C_{\phi_\alpha}$ on ${H}^2(\beta)$, the Berezin range $\textit{Ber}(C_{\phi_\alpha})$ is closed under complex conjugation and therefore is symmetric about the real axis.
\end{proposition}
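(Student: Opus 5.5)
The plan is to exhibit an explicit symmetry of the Berezin transform: I will show that $\overline{\widetilde{C}_{\phi_\alpha}(w)}$ equals $\widetilde{C}_{\phi_\alpha}(w')$ for a suitable point $w'\in\mathbb{D}$ depending on $w$. This gives $\overline{\textit{Ber}(C_{\phi_\alpha})}\subseteq \textit{Ber}(C_{\phi_\alpha})$, and since conjugation is an involution the two sets are equal. Symmetry about the real axis is then exactly this closure under conjugation.

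Starting from the simplified formula stated just before the proposition,
$$
\widetilde{C}_{\phi_\alpha}(w)=\frac{(1-|w|^2\beta)(1-\overline{\alpha}w)}{1-\overline{\alpha}w-|w|^2\beta+\alpha\overline{w}\beta},
$$
I will take the conjugate. The factor $1-|w|^2\beta$ is real because $\beta\in(0,1]$, so
$$
\overline{\widetilde{C}_{\phi_\alpha}(w)}=\frac{(1-|w|^2\beta)(1-\alpha\overline{w})}{1-\alpha\overline{w}-|w|^2\beta+\overline{\alpha}w\beta}.
$$
I then look for $w'$ with $|w'|=|w|$ and $\overline{\alpha}w'=\alpha\overline{w}$. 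With such a $w'$, the expression $\alpha\overline{w'}$ equals the conjugate of $\overline{\alpha}w'$, namely $\overline{\alpha}w$. Every term in the formula for $\widetilde{C}_{\phi_\alpha}(w')$ then matches the corresponding term in the conjugated expression above.

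For $\alpha\neq 0$, write $\alpha=|\alpha|e^{i\gamma}$. The choice $w'=e^{2i\gamma}\overline{w}$ works: it lies in $\mathbb{D}$, satisfies $|w'|=|w|$, and gives $\overline{\alpha}w'=|\alpha|e^{i\gamma}\overline{w}=\alpha\overline{w}$. Geometrically, $w'$ is the reflection of $w$ across the line through $0$ and $\alpha$. For $\alpha=0$ the transform is identically $1$, since $\widetilde{C}_{\phi_0}(w)=(1-|w|^2\beta)/(1-|w|^2\beta)$, so there is nothing to prove; alternatively, $w'=\overline{w}$ also works.

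The main obstacle is identifying the right reflection $w'$. Once it is chosen, the verification is a direct term-by-term substitution.
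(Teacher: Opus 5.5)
Your proof is correct and is essentially the paper's argument. The paper uses the same reflected point $re^{i(2\theta-\varphi)}=e^{2i\theta}\overline{w}$ (with $\alpha=\rho e^{i\theta}$) and checks the symmetry through the identity $e^{2i\theta}\overline{\phi_\alpha(e^{2i\theta}\overline{w})}=\phi_\alpha(w)$ in the unsimplified formula, whereas you verify it term by term in the expanded formula, which is equivalent.
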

\begin{proof}
    Let $w = r e^{i\varphi}$ and $\alpha = \rho e^{i\theta}$. We show that
    \[
    \widetilde{C}_{\phi_{\alpha}}(r e^{i\varphi}) = \overline{\widetilde{C}_{\phi_{\alpha}}(r e^{i(2\theta - \varphi)})},
    \]
    i.e.,
    \[
    \frac{1 - r^2\beta}{1 - r e^{-i\varphi} \phi_{\alpha}(r e^{i\varphi})\beta} 
    =
    \overline{ \frac{1 - r^2\beta}{1 - r e^{-i(2\theta - \varphi)} \phi_{\alpha}(r e^{i(2\theta - \varphi)})\beta}} .
    \]
    This occurs only if
    \[
    e^{-i\varphi} \phi_{\alpha}(r e^{i\varphi}) = \overline{e^{-i(2\theta - \varphi)} \phi_{\alpha}(r e^{i(2\theta - \varphi)})}.
    \]
    Now,
\begin{equation*}
	\begin{split}
    e^{2i\theta} \overline{\phi_{\alpha}(r e^{i(2\theta - \varphi)})} 
     &= e^{2i\theta} \overline{\left( \frac{r e^{i(2\theta - \varphi)} - \beta}{1 - \bar{\beta} r e^{i(2\theta - \varphi)}} \right)} \\
    &= e^{2i\theta} \left( \frac{r e^{i(\varphi - 2\theta)} - \rho e^{-i\theta}}{1 - \rho e^{i\theta} r e^{i(\varphi - 2\theta)}} \right) \\
    &= \frac{r e^{i\varphi} - \rho e^{i\theta}}{1 - \rho e^{-i\theta} r e^{i\varphi}} \\
    &= \phi_{\alpha}(r e^{i\varphi}).
\end{split}     	
\end{equation*}
    This completes the proof.
\end{proof}
\begin{corollary}
    If the Berezin range of \(C_{\phi_\alpha}\) is convex, then  \(\Re\{\widetilde{C}_{\phi_\alpha}(z) \} \in \textit{Ber}({C_{\phi_\alpha}})\) for each z \(\in \mathbb{D}\).
\end{corollary}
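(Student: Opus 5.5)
The corollary will follow from the preceding proposition together with the definition of convexity. I would argue directly for each $z\in\mathbb{D}$.

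First, fix $z\in\mathbb{D}$ and set $\zeta=\widetilde{C}_{\phi_\alpha}(z)\in \textit{Ber}(C_{\phi_\alpha})$. By the preceding proposition, $\textit{Ber}(C_{\phi_\alpha})$ is closed under complex conjugation. Concretely, writing $z=re^{i\varphi}$ and $\alpha=\rho e^{i\theta}$, the point $z'=re^{i(2\theta-\varphi)}\in\mathbb{D}$ satisfies
\[
\widetilde{C}_{\phi_\alpha}(z')=\overline{\widetilde{C}_{\phi_\alpha}(z)}=\overline{\zeta}.
\]
Hence both $\zeta$ and $\overline{\zeta}$ lie in $\textit{Ber}(C_{\phi_\alpha})$.

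Second, apply the convexity assumption with the parameter $\tfrac12$. This gives
\[
\tfrac12\zeta+\tfrac12\overline{\zeta}=\Re\{\zeta\}=\Re\{\widetilde{C}_{\phi_\alpha}(z)\}\in \textit{Ber}(C_{\phi_\alpha}),
\]
which is the claim. Since $z$ was arbitrary, the statement holds for every $z\in\mathbb{D}$.

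There is no real obstacle here. The only point needing care is that the reflected point $z'$ really lies in $\mathbb{D}$; this holds because $|z'|=r<1$. The identity $\widetilde{C}_{\phi_\alpha}(z')=\overline{\widetilde{C}_{\phi_\alpha}(z)}$ is exactly what the proposition's proof establishes. When $\alpha=0$ the angle $\theta$ is arbitrary, and any choice, say $\theta=0$, works in that computation.
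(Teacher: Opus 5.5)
Your proposal is correct and follows the paper's proof: both use the preceding proposition to place $\overline{\widetilde{C}_{\phi_\alpha}(z)}$ in $\textit{Ber}(C_{\phi_\alpha})$, then apply convexity with parameter $\tfrac12$ to get $\tfrac12\zeta+\tfrac12\overline{\zeta}=\Re\{\zeta\}\in \textit{Ber}(C_{\phi_\alpha})$. Your explicit reflected point $z'=re^{i(2\theta-\varphi)}$ is the one used in the proposition's proof, and the identity $\widetilde{C}_{\phi_\alpha}(z')=\overline{\widetilde{C}_{\phi_\alpha}(z)}$ holds.
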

\begin{proof}
    Suppose \(\textit{Ber}({C_{\phi_\alpha}})\) is convex. Then by the above proposition, we have \(\textit{Ber}({C_{\phi_\alpha}})\) is closed under complex conjugation. Therefore, we have 
    \[
    \frac{1}{2} C_{{\phi_\alpha}}(z) + \frac{1}{2} \overline{C_{{\phi_\alpha}}(z)} 
    = \Re\left\{ C_{{\phi_\alpha}}(z) \right\} \in \operatorname{Ber}(C_{\phi_\alpha}).
    \]
\end{proof}
    The Berezin range of this operator need not  always be convex (see Figure \ref{figure2}). 
\begin{figure}[H]
    \centering
    \includegraphics[scale=0.7]{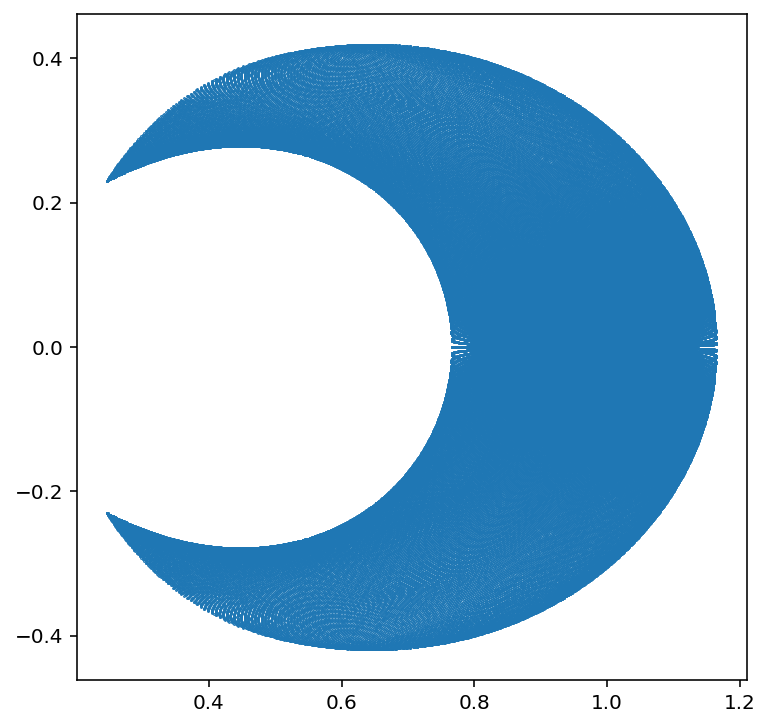}
    \caption{The Berezin range $\textit{Ber}(C_{\phi_{\alpha}})$ on $H^2(\beta)$ for $\alpha=0.5$ and $\beta=0.7$ ( apparently not  convex).}
    \label{figure2}
\end{figure}
\begin{remark}
    When $\alpha=0$,     $$\widetilde{C}_{\phi_{\alpha}}(w)=\frac{(1 - |w|^2 \beta)}{1  - |w|^2 \beta  }=1\qquad \forall ~ w \in \mathbb{D},$$ which implies $\textit{Ber}(C_{\phi_{\alpha}})=\{1\},$  a convex subset of $\mathbb{C}$. However, a complete characterization of 
    the values of $\alpha$ for which the Berezin range $\textit{Ber}(C_{\phi_{\alpha}})$ is convex 
     is still open.
     \begin{problem}
     	For what values of $\alpha$ the Berezin range $\textit{Ber}(C_{\phi_{\alpha}})$ is convex? 
     \end{problem}
\end{remark}
    Now we discuss  the convexity of the Berezin range of finite rank operators. 
    First, we prove the convexity of the Berezin range of a rank one operator of the form \(A(f)=\langle f, z^n \rangle z^n\), for \(n \in \mathbb{N}\).
\begin{theorem}\label{4.8}
    Let \(A(f)=\langle f, z^n \rangle z^n\) be a rank one operator on \(H^2(\beta)\). Then the Berezin range of A is \(\textit{Ber}(A)=\big[0,(\frac{n}{\beta(n+1)})^n (\frac{1}{n+1})\big]\), which is convex in \(\mathbb{C}\).
\end{theorem}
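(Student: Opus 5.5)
The plan is to compute the Berezin transform of $A$ explicitly, reduce it to a real-valued function of the single variable $x=|w|^2\in[0,1)$, and then determine its range by elementary calculus. Convexity will follow automatically: the range of a continuous real function on an interval is a connected subset of $\mathbb{R}\subset\mathbb{C}$, hence an interval.

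\textbf{Step 1: the Berezin transform.} For $w\in\mathbb{D}$ I will use the reproducing property $\langle z^n,k_w\rangle = w^n$. This can also be checked directly: since $k_w=\sum_m \beta^m\overline{w}^m z^m$ and $\|z^n\|^2=\beta_n^2=\beta^{-n}$, the $n$-th coefficient contributes $w^n\beta^n\beta^{-n}$. Then
$$\langle Ak_w,k_w\rangle=\langle k_w,z^n\rangle\langle z^n,k_w\rangle=|w|^{2n}.$$
Moreover $\|k_w\|^2=k_w(w)=\frac{1}{1-\beta|w|^2}$. Hence
$$\widetilde{A}(w)=|w|^{2n}\bigl(1-\beta|w|^2\bigr)=h(x),\qquad h(x):=x^n(1-\beta x),\quad x=|w|^2\in[0,1).$$
In particular $\textit{Ber}(A)=h([0,1))\subset[0,\infty)$ is real, so it lies on the real axis.

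\textbf{Step 2: the range of $h$.} Clearly $h(0)=0$ and $h\ge 0$ on $[0,1)$. Differentiating gives
$$h'(x)=x^{n-1}\bigl(n-\beta(n+1)x\bigr),$$
so the only interior critical point is $x_0=\frac{n}{\beta(n+1)}$. Here $h$ increases on $[0,x_0]$ and decreases afterwards, and
$$h(x_0)=\Bigl(\frac{n}{\beta(n+1)}\Bigr)^n\Bigl(1-\frac{n}{n+1}\Bigr)=\Bigl(\frac{n}{\beta(n+1)}\Bigr)^n\frac{1}{n+1},$$
which is the claimed right endpoint. Since $h$ is continuous on $[0,1)$, with $h(0)=0$ and maximum $h(x_0)$ attained, the intermediate value theorem gives $h([0,1))=[0,h(x_0)]$. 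This is a closed segment, hence convex.

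\textbf{Main obstacle.} The delicate point is that $x_0$ must lie in $[0,1)$, i.e.\ $\beta>\frac{n}{n+1}$. This condition is implicit in the stated formula, and I will make it explicit in the proof. If instead $\beta\le\frac{n}{n+1}$, then $h$ is increasing on $[0,1)$ and the range is $[0,1-\beta)$. That set is still an interval, so the convexity conclusion survives in every case; only the description of the endpoint changes. I expect the remaining verifications to be routine.
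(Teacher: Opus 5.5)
Your argument follows the paper's route: compute $\widetilde{A}(\lambda)=|\lambda|^{2n}(1-\beta|\lambda|^2)$, treat it as a real function of one variable (you use $x=|\lambda|^2$, the paper uses $|\lambda|$), and locate the critical point $|\lambda|^2=\frac{n}{\beta(n+1)}$. Your intermediate-value argument, showing that the whole interval is attained, is a step the paper leaves implicit.

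Your caveat is correct and matters more than you suggest. The paper never checks that this critical point lies in $[0,1)$, so the stated formula for $\textit{Ber}(A)$ is false when $\beta\le\frac{n}{n+1}$. For example, $n=1$, $\beta=\frac14$ gives $[0,\frac34)$, not $[0,1]$. So the hypothesis $\beta>\frac{n}{n+1}$ has to be added to the statement; it is not implicit in it. As you say, the convexity conclusion still holds for every $\beta\in(0,1]$.
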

\begin{proof}
    For \(\lambda \in \mathbb{D}\), \(A(k_\lambda)=\langle k_\lambda,z^n \rangle z^n=\overline{\lambda}^n z^n\). The Berezin transform at \(\lambda\) is
    \[
    \widetilde{A}(\lambda)=(1-\beta|\lambda|^2)\langle \overline{\lambda}^n z^n,k_\lambda \rangle =(1-\beta|\lambda|^2) |\lambda|^{2n} =|\lambda|^{2n}-\beta |\lambda|^{2n+2},
    \]
    where \(|\lambda| \in [0,1)\). Observe that \(A(\lambda)=A(|\lambda|)\) is a real function. Now we differentiate \(A(\lambda)\) with respect to \(|\lambda|\) and equate it to zero to find the extreme points.
    \[
    0=2n|\lambda|^{2n-1}-\beta(2n+2)|\lambda|^{2n+1}=2|\lambda|^{2n-1}(n-\beta(n+1)|\lambda|^2).
    \]
    This happens if and only if \(|\lambda|=0  \text{ or } |\lambda|^2=\frac{n}{\beta (n+1)}\). Now if \(|\lambda|=0\), then \(\widetilde{A}(\lambda)=0\). If \(|\lambda|^2=\frac{n}{\beta(n+1)}\), then
    \[
    \widetilde{A}(\lambda)=\left(\frac{n}{\beta(n+1)}\right)^n-\beta \left(\frac{n}{\beta(n+1)}\right)^{n+1}=\left(\frac{n}{\beta(n+1)}\right)^n \left(\frac{1}{n+1}\right).
    \]
    Thus, the Berezin range of A is \(\textit{Ber}(A)=\big[0,(\frac{n}{\beta(n+1)})^n (\frac{1}{n+1})\big]\), which is a convex subset of \(\mathbb{C}\).
\end{proof}
\begin{remark}
     By setting $\beta=1$ in Theorem~\ref{4.8}, we get \cite[Proposition 2.1]{athulfiniterank}, which states that if $ A(f) = \langle f, z^n \rangle z^n
     $ is a rank one operator on  $H^2(\mathbb{D}).$
     Then the Berezin range of $A$ is 
     $
     \textit{Ber}(A)
     =
     \left[
     0,\,
     \left(\frac{1}{n+1}\right)
     \left(\frac{n}{n+1}\right)^{\!n}
     \right],
     $
     which is  convex in $\mathbb{C}$.
\end{remark}
     Now we prove the general case of the above theorem by choosing any arbitrary $ g_i \in H^2(\beta).$
\begin{theorem}
     Let A be the finite rank operator
     \(
     A(f)=\sum_{i=1}^n\bigl\langle f,\,g_i\bigr\rangle\,g_i\),
     where \(f,g_i\in H^2(\beta)\). Then the Berezin range of \(A\),
     \(\textit
     {Ber}(A)\)
     is a convex subset of \(\mathbb{C}\).
\end{theorem}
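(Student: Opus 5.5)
The plan is to show that the Berezin transform of $A$ is a continuous, real-valued function on the connected set $\mathbb{D}$. Its image $\textit{Ber}(A)$ is then a connected subset of $\mathbb{R}\subset\mathbb{C}$, i.e.\ an interval, and intervals are convex. No explicit description of the range, as in Theorem~\ref{4.8}, is needed.

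\emph{Step 1: compute $\widetilde{A}$ explicitly.} For $\lambda\in\mathbb{D}$, apply $A$ to the kernel and use the reproducing property $\langle k_\lambda, g_i\rangle=\overline{g_i(\lambda)}$:
\[
\langle A k_\lambda, k_\lambda\rangle=\sum_{i=1}^n \langle k_\lambda,g_i\rangle\langle g_i,k_\lambda\rangle=\sum_{i=1}^n \overline{g_i(\lambda)}\,g_i(\lambda)=\sum_{i=1}^n |g_i(\lambda)|^2 .
\]
For the kernel $k_\lambda(z)=\frac{1}{1-\beta\overline{\lambda}z}$ we have $\|k_\lambda\|^2=k_\lambda(\lambda)=\frac{1}{1-\beta|\lambda|^2}$. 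Hence
\[
\widetilde{A}(\lambda)=(1-\beta|\lambda|^2)\sum_{i=1}^n |g_i(\lambda)|^2 \in [0,\infty).
\]
In particular $\textit{Ber}(A)\subset[0,\infty)\subset\mathbb{R}$. This generalises the computation in Theorem~\ref{4.8}, where $g_1=z^n$ and $n=1$.

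\emph{Step 2: continuity and connectedness.} Each $g_i\in H^2(\beta)$ is analytic on $\mathbb{D}$, so $\lambda\mapsto|g_i(\lambda)|^2$ is continuous. The factor $1-\beta|\lambda|^2$ is continuous and positive, since $\beta\le 1$. Thus $\widetilde{A}:\mathbb{D}\to\mathbb{R}$ is continuous. Because $\mathbb{D}$ is connected, $\textit{Ber}(A)=\widetilde{A}(\mathbb{D})$ is a connected subset of $\mathbb{R}$, hence an interval. The interval may be degenerate, e.g.\ $\{0\}$ when all $g_i=0$, and it may be open or closed at its ends. In every case it is convex in $\mathbb{C}$.

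\emph{Main obstacle.} There is no real obstacle; the only point needing care is the bookkeeping in Step~1. One must check that the inner products combine into the nonnegative quantity $|g_i(\lambda)|^2$ rather than a complex number. This is exactly where the symmetric form $\langle f,g_i\rangle g_i$ of the operator is used: for a general finite-rank operator $\sum\langle f,h_i\rangle g_i$ the transform would be complex-valued, and the argument would fail.
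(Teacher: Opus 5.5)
Your proposal is correct and follows the paper's proof essentially verbatim. You compute $\widetilde{A}(\lambda)=(1-\beta|\lambda|^2)\sum_{i=1}^n|g_i(\lambda)|^2$, observe that it is a continuous real-valued function on the connected set $\mathbb{D}$, and conclude that $\textit{Ber}(A)$ is an interval or a singleton, hence convex.
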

\begin{proof}
    For \(\lambda\in\mathbb{D}\), the action of \(A\) on the reproducing kernel \(k_\lambda\) is
    \[
    A(k_\lambda)
    =\sum_{i=1}^n\langle k_\lambda,\,g_i\rangle\,g_i(z)
    =\sum_{i=1}^n \overline{g_i(\lambda)}\,g_i(z).
    \]
    Hence, the Berezin transform at \(\lambda\) is
    \[
    \widetilde A(\lambda)
    =(1-\beta |\lambda|^2)\bigl\langle A(k_\lambda),\,k_\lambda\bigr\rangle
    =(1-\beta |\lambda|^2)\sum_{i=1}^n|g_i(\lambda)|^2,
    \quad \lambda\in\mathbb{D}.
    \]
    Since each \(g_i\in H^2(\beta)\) is holomorphic, \(\widetilde A(\lambda)\) is a real-valued continuous function on the connected set \(\mathbb{D}\). Therefore, its image
    \(\textit{Ber}(A)\) is connected in \(\mathbb{R}\). In \(\mathbb{R}\), the only connected subsets are intervals or singletons. Hence, \(\textit{Ber}(A)\) must be either an interval or a singleton. Therefore,  \(\textit{Ber}(A)\) is a convex subset of \(\mathbb{C}\).
\end{proof}
    Next we check the convexity of the Berezin range of the operator of the form \(A(f)=\langle f, z^n \rangle z^m, \text{ where } m>n\).
\begin{theorem}\label{4.11}
    Let \(A(f)=\langle f, z^n \rangle z^m, \text{ where } m>n\) be a rank one operator on \(H^2(\beta)\). Then  $\textit{Ber}(A)$ is a disc with centre at the origin and radius $\left(\frac{2}{m+n+2}\right)\left(\frac{m+n}{\beta(m+n+2)}\right)^{\frac{m+n}{2}}$ and therefore is convex in $\mathbb{C}.$.
\end{theorem}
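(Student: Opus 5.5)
Following the proof of Theorem~\ref{4.8}, I will first compute the Berezin transform explicitly. For $\lambda\in\mathbb{D}$ the kernel is $k_\lambda(z)=\sum_{j\ge0}\beta^j\overline{\lambda}^jz^j$. From $\langle z^j,z^n\rangle=\beta_n^2\delta_{jn}=\beta^{-n}\delta_{jn}$ we get $\langle k_\lambda,z^n\rangle=\beta^n\overline{\lambda}^n\beta^{-n}=\overline{\lambda}^n$, so $Ak_\lambda=\overline{\lambda}^n z^m$. Then $\langle z^m,k_\lambda\rangle=\overline{k_\lambda(\cdot)}$-coefficient computation gives $\langle z^m,k_\lambda\rangle=\lambda^m$, consistent with the reproducing property. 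Using $\|k_\lambda\|^2=(1-\beta|\lambda|^2)^{-1}$, this yields
\[
\widetilde{A}(\lambda)=(1-\beta|\lambda|^2)\,\overline{\lambda}^{\,n}\lambda^{m}.
\]
Writing $\lambda=re^{i\theta}$, this becomes $\widetilde{A}(re^{i\theta})=(1-\beta r^2)r^{m+n}e^{i(m-n)\theta}$.

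Since $m>n$, the factor $e^{i(m-n)\theta}$ runs over the whole unit circle as $\theta$ ranges over $[0,2\pi)$. Hence $\textit{Ber}(A)$ is the union of the circles of radius $h(r):=r^{m+n}-\beta r^{m+n+2}$ for $r\in[0,1)$, and therefore equals the set $\{z:|z|\in h([0,1))\}$. The function $h$ is continuous with $h(0)=0$, so $h([0,1))$ is an interval containing $0$. Consequently $\textit{Ber}(A)$ is a disc centred at the origin, closed or open according to whether the supremum of $h$ is attained. Any such disc is convex.

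It remains to find the radius, which is a one-variable calculus step mirroring Theorem~\ref{4.8} with $2n$ replaced by $m+n$. We have $h'(r)=r^{m+n-1}\big((m+n)-\beta(m+n+2)r^2\big)$, so the critical point is $r_0^2=\frac{m+n}{\beta(m+n+2)}$. Substituting gives
\[
h(r_0)=r_0^{m+n}(1-\beta r_0^2)=\Big(\frac{m+n}{\beta(m+n+2)}\Big)^{\frac{m+n}{2}}\frac{2}{m+n+2},
\]
which matches the stated radius. Since $h\ge0$ increases before $r_0$ and decreases after it, $h(r_0)$ is the maximum, and it is attained, so the disc is closed.

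The main obstacle is the requirement $r_0<1$, i.e.\ $\frac{m+n}{\beta(m+n+2)}<1$. This holds for $\beta=1$, but for small $\beta$ we get $r_0\ge1$. In that case $h$ is increasing on $[0,1)$, its supremum is $1-\beta$, and it is not attained, so the range is the open disc of radius $1-\beta$. I would treat the stated radius under the implicit hypothesis $\beta>\frac{m+n}{m+n+2}$ (the same hypothesis implicitly used in Theorem~\ref{4.8}), and add a remark on the complementary case. Convexity holds in either case, since both outcomes are discs centred at the origin.
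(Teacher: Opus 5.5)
Your proof is the paper's proof: you compute $\widetilde{A}(re^{i\theta})=(r^{m+n}-\beta r^{m+n+2})e^{i(m-n)\theta}$, use rotational invariance together with $0\in\textit{Ber}(A)$ to get a disc centred at the origin of radius $\sup_{r\in[0,1)}h(r)$, and locate the critical point $r_0^2=\frac{m+n}{\beta(m+n+2)}$.

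Your caveat is also correct, and it exposes a gap in the paper's proof. The paper takes $\sup_{r\in[0,1)}$ but never checks that $r_0<1$. The stated radius is the maximum of $h$ over $[0,\beta^{-1/2})$ rather than over $[0,1)$, so it is right only when $\beta>\frac{m+n}{m+n+2}$. For example, $m=1$, $n=0$, $\beta=0.1$ gives the stated radius $\frac{2}{3}\sqrt{10/3}>1$, whereas $|\widetilde{A}(\lambda)|<1$ on $\mathbb{D}$. When $\beta\le\frac{m+n}{m+n+2}$, $\textit{Ber}(A)$ is instead the open disc of radius $1-\beta$, which is still convex.
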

\begin{proof}
    For \(\lambda \in \mathbb{D}\), \(A(k_\lambda)=\langle k_\lambda,z^n \rangle z^m=\overline{\lambda}^n z^m\). The Berezin transform at \(\lambda\) is
    \[
    \widetilde{A}(\lambda)=(1-\beta|\lambda|^2)\langle \overline{\lambda}^n z^m,k_\lambda \rangle =(1-\beta|\lambda|^2) |\lambda|^{2n} \lambda^{m-n}.
    \]
    Put \(\lambda = re^{i\theta}\). Then 
    we get
    \[\widetilde{A}(\lambda)=(1-\beta r^2) r^{2n} r^{m-n} e^{i(m-n)\theta}=(r^{m+n}-\beta r^{m+n+2})e^{i(m-n)\theta}.
    \]
    Therefore, the Berezin range of A is 
    \[
    \textit{Ber}(A)=\left\{ (r^{m+n}-\beta r^{m+n+2})e^{i(m-n)\theta}:re^{i\theta} \in \mathbb{D}\right\}.
    \]
     	
    For each \(r\in[0,1)\), this set is a circular set. For \(\eta\in \textit{Ber}(A)\), we have
    \(
    \eta=(r^{m+n}-\beta r^{m+n+2})e^{i\theta(m-n)}  \) for some 
    \(re^{i\theta}\in \mathbb{D}.
    \)
     Then for any \(t\in[0,2\pi)\),
    \(\eta e^{it}
    =(r^{m+n}-\beta r^{m+n+2})e^{i\theta(m-n)}e^{it}
    =(r^{m+n}-\beta r^{m+n+2})e^{i(\theta(m-n)+t)}\in\textit{Ber}(A),
    \) since \(\eta e^{it}\) is the image of \(re^{i(\theta(m-n)+t)}\in \mathbb{D}\).  Since \(0\in\textit{Ber}(A)\),
    it is easy to observe that \(\textit{Ber}(A)\) is the disc centered at the origin with radius \(\sup_{r\in[0,1)}\bigl(r^{m+n}-\beta r^{m+n+2}\bigr)\).
    To find the extreme values of \((r^{m+n}-\beta r^{m+n+2})\), we differentiate it and equate the derivative to 0. We get 
    \[
    0=(m+n)r^{m+n-1}-\beta (m+n+2)r^{m+n+1}=r^{m+n-1}\left((m+n)-\beta (m+n+2)r^2\right).
    \]
    Therefore, the extreme points are \(r=0 \text{ and } r= \sqrt{\frac{m+n}{\beta(m+n+2)}}\). Substituting these values, we get the radius of the disc as \(\left(\frac{2}{m+n+2}\right)\left(\frac{m+n}{\beta(m+n+2)}\right)^{\frac{m+n}{2}}\). Hence 
    \[ 
    \textit{Ber}(A)=\mathbb{D}_{\left(\frac{2}{m+n+2}\right)\left(\frac{m+n}{\beta(m+n+2)}\right)^{\frac{m+n}{2}}},
    \]
    which is convex in \(\mathbb{C}\).
\end{proof}
\begin{remark}
    By setting $\beta=1$ in Theorem~\ref{4.11},  we get \cite[Theorem 2.10]{athulfiniterank}, which states that if $A(f)=\langle f,z^n\rangle z^m, m >n $ is a rank one operator on $H^2(\mathbb{D})$.
    Then the Berezin range of $A$, $\textit{Ber}(A)$ is a disc with centre at the origin and radius $\left(\frac{2}{m+n+2}\right)\left(\frac{m+n}{(m+n+2)}\right)^{\frac{m+n}{2}}$ and therefore is convex in $\mathbb{C}.$
\end{remark}

\subsection{ On Fock Space}
     
    For any \( \alpha > 0 \), consider the Gaussian probability measure
    \[
    dv_\alpha(z) =  \frac{\alpha}{{\pi}^n}  e^{-\alpha |z|^2} \, dv(z)
    \]
    on \( \mathbb{C}^n \), where \( dv \) is the Lebesgue volume measure on \( \mathbb{C}^n \). The Fock space  \( \mathcal{F}^2_\alpha(\mathbb{C}^n) \) \cite{carswell2003composition} consists of all holomorphic functions \( f \) on \( \mathbb{C}^n\) with
    \[
    \|f\|^2_\alpha \equiv \int_{\mathbb{C}^n} |f(z)|^2 \, dv_\alpha(z) < \infty.
    \]
    \( \mathcal{F}^2_\alpha(\mathbb{C}^n) \) is a reproducing kernel  Hilbert space with the following inner product
    \[
    \langle f, g \rangle_\alpha = \int_{\mathbb{C}^n} f(z) \overline{g(z)} \, dv_\alpha(z).
    \]
    The reproducing kernel at $w\in \mathbb{C}^n $ is given by
    \[
    k_w(z) = k(z, w) = e^{\alpha \langle z, w \rangle},
    \]
    where $\langle z,w \rangle=\sum_{i=1}^{n}z_i\overline{w}_i$ and 
    \[
    \|k_w\|^2_\alpha = e^{\alpha \|w\|^2}.
    \] 
     
    In \cite{carswell2003composition}, Carswell, MacCluer and Schuster characterised the bounded composition operators on the Fock space $\mathcal{F}^2_\alpha(\mathbb{C}^n).$
\begin{theorem}\cite{carswell2003composition}
    Suppose $\phi:\mathbb{C}^n\to \mathbb{C}^n$ is  a holomorphic mapping.
\begin{itemize}
    \item[(a)] If $C_{\phi} $ is bounded on $\mathcal{F}^2_\alpha(\mathbb{C}^n)$, then $\phi(z)=Az+B$,  where $A$ is an $n \times n$ matrix and $B$ is an $n \times 1$ vector. Furthermore, $\|A\|\leq 1, $ and if $|A\zeta|=|\zeta|$ for some $\zeta \in \mathbb{C}^n,$ then $\langle A\zeta , B\rangle=0.$
    \item[(b)] If $C_{\phi} $ is compact on $\mathcal{F}^2_\alpha(\mathbb{C}^n)$, then $\phi(z)=Az+B$, where $\|A\|<1.$ 
\end{itemize}
    Converse is also true.
\end{theorem}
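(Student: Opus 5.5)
The plan is to test $C_\phi$ against the reproducing kernels $k_w(z)=e^{\alpha\langle z,w\rangle}$. The basic identity is $C_\phi^*k_w=k_{\phi(w)}$, since $\langle f,C_\phi^*k_w\rangle=f(\phi(w))$. Taking norms, boundedness of $C_\phi$ with $M=\|C_\phi\|$ gives
$$
e^{\alpha|\phi(w)|^2/2}\le M\,e^{\alpha|w|^2/2},\qquad\text{so}\qquad |\phi(w)|^2\le |w|^2+\tfrac{2}{\alpha}\log M \quad\text{for all } w\in\mathbb{C}^n .
$$

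\textbf{Part (a).} The displayed bound shows that each coordinate $\phi_j$ is an entire function of linear growth. By the Cauchy estimates (Liouville's theorem applied to the second derivatives), each $\phi_j$ is affine, so $\phi(z)=Az+B$.

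Next substitute $z=s\zeta$ with $s\to\infty$ in $|Az+B|^2\le|z|^2+C$. Dividing by $s^2$ gives $|A\zeta|\le|\zeta|$, hence $\|A\|\le 1$.

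Now suppose $|A\zeta|=|\zeta|$. Put $z=se^{i\theta}\zeta$ with $s>0$. The $s^2$ terms cancel, leaving
$$
2s\,\Re\big(e^{i\theta}\langle A\zeta,B\rangle\big)+|B|^2\le C \quad\text{for all } s>0,\ \theta .
$$
Choosing $\theta$ so that the real part is positive, unless $\langle A\zeta,B\rangle=0$, contradicts this as $s\to\infty$. Hence $\langle A\zeta,B\rangle=0$.

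\textbf{Part (b).} Compactness forces $\|C_\phi^*\hat k_w\|\to0$ as $|w|\to\infty$, because $\hat k_w\to0$ weakly. This means $|\phi(w)|^2-|w|^2\to-\infty$. If $\|A\|=1$, pick a unit vector $\zeta$ with $|A\zeta|=1$. By (a), $\langle A\zeta,B\rangle=0$, so along $w=s\zeta$ we get $|\phi(w)|^2-|w|^2=|B|^2$, which does not tend to $-\infty$. Hence $\|A\|<1$.

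\textbf{Converse.} I would reduce $A$ to diagonal form. Write the singular value decomposition $A=UDV$ with $U,V$ unitary and $D=\mathrm{diag}(d_1,\dots,d_n)$, $0\le d_j\le1$. Then
$$
\phi=\psi_3\circ\psi_2\circ\psi_1,\qquad \psi_1(z)=Vz,\quad \psi_2(z)=Dz+U^*B,\quad \psi_3(z)=Uz ,
$$
so $C_\phi=C_{\psi_1}C_{\psi_2}C_{\psi_3}$. Composition with a unitary matrix is unitary on $\mathcal{F}^2_\alpha$ by rotation invariance of $dv_\alpha$. Moreover, the hypothesis $\langle A\zeta,B\rangle=0$ translates into $(U^*B)_j=0$ whenever $d_j=1$.

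Using $\mathcal{F}^2_\alpha(\mathbb{C}^n)=\bigotimes_j\mathcal{F}^2_\alpha(\mathbb{C})$, the operator $C_{\psi_2}$ is a tensor product of one-variable operators $C_{d_jz+b_j}$. Whenever $d_j=1$ we have $b_j=0$, so that factor is the identity.

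For $d<1$, the one-variable factor is handled by a change of variables $u=dz+b$ (first for $d>0$):
$$
\|f(dz+b)\|_\alpha^2=\frac{\alpha}{\pi d^2}\int|f(u)|^2e^{-\alpha|u-b|^2/d^2}\,dv(u).
$$
The weight ratio $e^{\alpha|u|^2-\alpha|u-b|^2/d^2}$ is bounded above because $1/d^2>1$, so the quadratic term in the exponent dominates. This gives boundedness. In fact the ratio tends to $0$ at infinity, which I expect to give compactness: truncating to a large disc, on which point evaluations are uniformly bounded, should make the operator a norm limit of compact operators. The case $d=0$ is rank one and trivially bounded.

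Tensor products of bounded operators are bounded, and when every $d_j<1$ they are tensor products of compact operators, hence compact. This proves both directions of the converse.

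The main obstacle I anticipate is this converse. The delicate points are justifying the tensor factorization and showing compactness cleanly from the decaying weight ratio; the necessity directions are routine kernel estimates.
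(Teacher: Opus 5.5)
The paper does not prove this theorem. It is quoted from Carswell, MacCluer and Schuster \cite{carswell2003composition}, so there is no in-paper argument to compare yours with. Judged on its own, your proof is correct.

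The necessity half is the standard reproducing-kernel argument, and every step checks:
\begin{itemize}
\item the identity $C_\phi^*k_w=k_{\phi(w)}$ and the growth bound $|\phi(w)|^2\le|w|^2+\frac{2}{\alpha}\log\|C_\phi\|$;
\item the Cauchy estimates forcing $\phi$ to be affine;
\item the two $s\to\infty$ tests giving $\|A\|\le 1$ and $\langle A\zeta,B\rangle=0$;
\item the weak nullity of $\hat k_w$ for (b).
\end{itemize}
Your translation of the orthogonality hypothesis into $(U^*B)_j=0$ whenever $d_j=1$ is also right.

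The two points you flagged as delicate close quickly.

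\textbf{Tensor factorization.} Under the unitary identification $f_1\otimes\cdots\otimes f_n\mapsto f_1(z_1)\cdots f_n(z_n)$ one has $k_w=\bigotimes_j k_{w_j}$. So $T=\bigotimes_j C_{d_jz+b_j}$ satisfies $T^*k_w=\bigotimes_j k_{d_jw_j+b_j}=k_{\psi_2(w)}$. Hence $(Tf)(w)=\langle f,k_{\psi_2(w)}\rangle=f(\psi_2(w))$, and $T=C_{\psi_2}$.

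\textbf{Compactness of a one-variable factor with $0<d<1$.} Let $(f_m)$ be weakly null. It is bounded, and it tends to $0$ uniformly on compact sets, since $|f_m(w)|\le\|f_m\|e^{\alpha|w|^2/2}$ and $f_m(w)\to 0$ pointwise. Put $\rho(u)=e^{\alpha(|u|^2-|u-b|^2/d^2)}$. Your change of variables then gives
$$\|Cf_m\|^2\le d^{-2}\big(\int_{|u|\le R}|f_m|^2\rho\,dv_\alpha+\sup_{|u|>R}\rho\cdot\sup_m\|f_m\|^2\big),$$
which is eventually small.

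For the compactness half of the converse you can in fact skip the tensor machinery. If $\|A\|<1$, then for any orthonormal basis $(e_m)$,
$$\sum_m\|C_\phi e_m\|^2=\int_{\mathbb{C}^n}e^{\alpha|Az+B|^2}\,dv_\alpha(z)<\infty,$$
because $|Az+B|^2\le(1+\epsilon)\|A\|^2|z|^2+C_\epsilon$ with $(1+\epsilon)\|A\|^2<1$. So $C_\phi$ is Hilbert--Schmidt, which gives boundedness and compactness at once.

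Your SVD/tensor reduction is then needed only for boundedness when $\|A\|=1$. There its real content is that the isometric directions of $A$ split off as an identity factor, because $B$ is orthogonal to their image.
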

     
    If we consider \(\phi(z)=Az\), where \(A\) is a  matrix of order \(n\) and \(z=(z_1,z_2,...,z_n)\), then the Berezin transform of the composition operator is given by 
\begin{align*}
    \widetilde{C}_\phi(w)= \langle C_\phi \hat k_w ,\hat k_w\rangle
    =\frac{1}{\|k_w\|^2}\langle C_\phi k_w,k_w\rangle
    =\frac{1}{e^{\alpha|w|^2}}k_w(\phi(w)),
\end{align*}
    where $w=(w_1,w_2,...,w_n)$. When $A=\lambda I$, a scalar matrix of order $n$, we get
    $$
    \widetilde{C}_\phi(w)=e^{\sum_{i=1}^{n} (\lambda - 1)\,\alpha\, |w_i|^2}.
    $$
     
    The Berezin range of these operators is not always convex, as we see in Figure \ref{figure1}. Here we try  to find the values of $\alpha$ for which the Berezin range is convex.
     
\begin{figure}[H]
    \centering
    \includegraphics[scale=0.7]{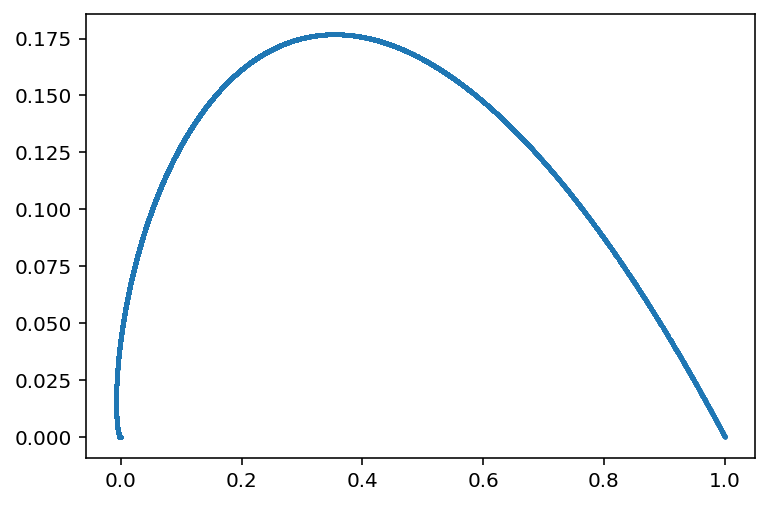}
    \caption{The Berezin range $\textit{Ber}(C_{\phi})$ on $\mathcal{F}^2_{\alpha}(\mathbb{C}^n)$ for $\alpha=1$ and $\lambda=0.5i$ ( apparently not convex).}
    \label{figure1}
\end{figure} 
    The following theorem characterizes the convexity of the Berezin range of this operator.
\begin{theorem}
    Let \(z \in \mathbb{C}^n\) and \( \lambda \in \overline{\mathbb{D}}\) and \( \phi(z)=A z\), where \(A=\lambda I\). Then the Berezin range of \( C_\phi\) acting on \(\mathcal{F}^2_\alpha(\mathbb{C}^n)\) is convex if and only if \( \lambda \in [-1,1].\)
\end{theorem}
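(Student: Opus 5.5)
The plan is to compute the Berezin range explicitly and use the fact that it depends on $w$ only through the single real parameter $s=\alpha\|w\|^2=\alpha\sum_{i=1}^n |w_i|^2$, which ranges over $[0,\infty)$ as $w$ ranges over $\mathbb{C}^n$. From the formula $\widetilde{C}_\phi(w)=e^{(\lambda-1)\alpha|w|^2}$ derived just above, we get
$$
\textit{Ber}(C_\phi)=\left\{e^{(\lambda-1)s}: s\in[0,\infty)\right\},
$$
a curve in $\mathbb{C}$ starting at $1$ (for $s=0$). Writing $\lambda=a+ib$ with $a^2+b^2\le 1$, this is $\{e^{(a-1)s}e^{ibs}: s\ge 0\}$, so the whole problem reduces to deciding when this one-parameter curve is convex.

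For sufficiency, suppose $\lambda\in[-1,1]$. If $\lambda=1$, the range is $\{1\}$. If $-1\le\lambda<1$, then $e^{(\lambda-1)s}$ is a continuous, strictly decreasing function of $s$ tending to $0$, so the range is the interval $(0,1]$, which is convex.

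For necessity, suppose $\lambda\notin[-1,1]$, so $b\neq 0$. Then $a<1$, since $a=1$ forces $b=0$. Thus the curve is a logarithmic spiral $e^{(a-1)s}e^{ibs}$ with modulus strictly decreasing to $0$ and argument $bs$ unbounded. I would exhibit an explicit failure of convexity, and there are two natural routes.

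\begin{itemize}
\item[(1)] The spiral never passes through $0$, since $|e^{(\lambda-1)s}|>0$. However, the points at $s=0$ and $s=\pi/|b|$ are $1$ and $-e^{(a-1)\pi/|b|}$, both real with opposite signs. Convexity would put a segment through $0$ into the range, so $0\in\textit{Ber}(C_\phi)$, a contradiction.
\item[(2)] Alternatively, a collinearity argument as in Theorem \ref{4.1}.
\end{itemize}

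Route (1) is cleaner, so I will use it. The only point needing care is the case $a=-1$: there $a^2+b^2\le1$ forces $b=0$, so this case is already covered by sufficiency. This boundary case, rather than the spiral argument itself, is where I expect the only real attention to be needed.
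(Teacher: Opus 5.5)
Your proof is correct, and the sufficiency half is the same as the paper's. The necessity argument, however, takes a different and simpler route.

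The paper sets $\lambda-1=a+ib$ and handles $a=0$ separately. For $a,b\neq0$ it first asserts that a convex path must be a point or a line segment. It then requires $1$, $e^{a}e^{ib}$ and $e^{\alpha a\rho^2}e^{i\alpha b\rho^2}$ to be collinear for every $\rho\ge0$, expands the collinearity determinant, substitutes $\rho=\sqrt{2\pi/(\alpha b)}$, and cancels $e^{a}\sin b$ to force $a=0$.

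You instead use the half-turn point $s=\pi/|b|$. The range then contains $1$ and $-e^{(a-1)\pi/|b|}$, the segment joining them passes through $0$, and $0$ is not in the range because $|e^{(\lambda-1)s}|>0$. This buys you several things:
\begin{itemize}
\item[(a)] You do not need the claim that a convex curve of this kind lies in a line. That claim is true here, since the curve is smooth and so has empty interior, but the paper does not justify it.
\item[(b)] You need no trigonometric computation.
\item[(c)] You avoid the paper's implicit side conditions: $b>0$ for the square root, and $\sin b\neq0$ for the cancellation, which holds only because $0<|b|\le1$.
\end{itemize}
Your argument is essentially the mechanism the paper itself uses in the later example with the points $1$ and $-e^{-\pi}$.

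One correction to your closing commentary: it misplaces the difficulty. Route (1) uses only $b\neq0$ and the nonvanishing of the exponential, so neither the observation $a<1$ nor the boundary case $a=-1$ plays any role in it.
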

\begin{proof}
    Suppose that \( \lambda =1\). Then \(\phi(z)=Iz\), which implies that
    \[
    \widetilde{C}{_\phi(z)}=e^0=1.
    \]
    Thus, \(\textit{Ber}(C_\phi)= \{ 1 \}\), which is a convex set in \( \mathbb{C}\).
    Now suppose that \(\lambda \in [-1,1)\). Let \( z_i=r_ie^{i\theta_i}\) for \(i=1,2,...,n\). Then for \( 0\leq r_i < \infty\)
    \[
    \textit{Ber}(C_\phi)=\big\{ e^{\sum_{i=1}^{n} (\lambda - 1)\,\alpha\, r_i^2}:r_i \in [0, \infty)\big\}=(0,1],
    \]
    which is also convex in \(\mathbb{C}\).
     	
    Conversely, suppose that \(\textit{Ber}(C_\phi)\) is convex. We need to prove that \(\lambda \in [-1,1].\) Let \(\lambda-1=a+ib  \). Then
    \[
    \widetilde{C}_\phi(r_1e^{i\theta_1},r_2e^{i\theta_2},...,r_ne^{i\theta_n})=e^{\sum_{i=1}^{n}(a+ib)\alpha r_i^2}=e^{\sum_{i=1}^{n}a\alpha r_i^2}e^{\sum_{i=1}^{n}ib\alpha r_i^2}, \qquad r\in [0,\infty).
    \]
    If \(a=0\), then \(\lambda = 1+ib\) and since \(\lambda \in \overline{\mathbb{D}}\) we have \(b=0\).
     	
    In order to prove $ \lambda \in [-1,1]$, it is enough to prove that if the imaginary part of \( \lambda\) is non zero then  \(\textit{Ber}(C_\phi)\) is not convex. For  non zero $a$ and $b$ we have
    \[
    \widetilde{C}_\phi(r_1e^{i\theta_1},r_2e^{i\theta_2},...,r_ne^{i\theta_n})=e^{\sum_{i=1}^{n}a\alpha r_i^2}e^{\sum_{i=1}^{n}ib\alpha r_i^2},
    \]
    which is a function independent of \(\theta\). Therefore \(\textit{Ber}(C_\phi)\) is just a path in \(\mathbb{C}\). Our assumption \(\textit{Ber}(C_\phi)\) is convex  implies  that  it is either a point or a line segment.  It is easy to observe that \(\widetilde{C}_\phi(0,0,...,0) = 1 \), 
    \( \widetilde{C}_\phi(\frac{1}{\sqrt{\alpha}},0,...,0) = e^{ a} e^{i b} \) and \( \widetilde{C}_\phi(\rho e^{i\theta},0,...,0) = e^{\alpha a \rho ^2} e^{i\alpha b \rho ^2} \) where \(\rho \in [0,\infty)\).
    Since \( a \) and \( b \) are non-zero real numbers, 1 and \( e^a e^{ib} \) are two distinct points in the complex plane. Thus, \( \textit{Ber}(C_\phi) \) is a line segment. Then, every points in \( \textit{Ber}(C_\phi) \) are collinear.
    Therefore  the points 1 , \( e^{ a} e^{ ib } \) and \( e^{\alpha a \rho ^2} e^{i\alpha b \rho ^2}\)   in \( \textit{Ber}(C_\phi) \) are collinear. In the \( \mathbb{R}^2 \) plane, these three points can be viewed as \( (1, 0),\\ (e^{a} \cos( b), e^{a} \sin ( b)) \) and \( (e^{a\alpha \rho^2} \cos(b\alpha\rho^2), e^{a\alpha\rho^2} \sin(b\alpha\rho^2)) \) respectively. 
    Then, from collinearity, we have
\begin{equation*}
\begin{split}
    &1 \Big(e^{ a} \sin  b -  e^{\alpha a\rho^2} \sin(\alpha b\rho^2)\Big) 
    + e^{ a} \cos b \Big(e^{\alpha a\rho^2} \sin(\alpha b\rho^2) - 0\Big)\\
    &\qquad \qquad + e^{\alpha a\rho^2} \cos(\alpha b\rho^2) \Big(0 - e^{ a }\sin { b}\Big) = 0.
\end{split}
\end{equation*}
    Rearranging and simplifying the above equation, we get  
    \[
    e^{ a} \sin b = e^{\alpha a\rho^2} \big( \sin(\alpha b\rho^2) + e^{a} \sin(\alpha b\rho^2 - b)\big) \qquad \forall~\rho \in [0, \infty) .
    \]  
    In particular, put \( \rho = \sqrt{\frac{2\pi }{\alpha b} } \) in the above equation. Then  
    $$
    e^{ a} \sin  b = e^{\frac{a\pi}{b}} [\sin 2\pi + e^{ a} \sin(2\pi + b)]
    = e^{\frac{a\pi}{b}} [  e^{ a} \sin b].
    $$
    By cancelling \( e^{ a} \sin  b \) on both sides, we get  
    \[
    e^{\frac{a\pi}{b}} = 1=e^0.
    \]
    This implies that \( a \pi = 0 \), thus \(a=0\), which contradicts our assumption that \( a \neq 0 \).  
    So \( \textit{Ber}(C_\phi) \) is not convex.  
    Therefore, if the imaginary part of \( \lambda \) is non-zero, then  
    \( \textit{Ber}(C_\phi) \) is not convex.
\end{proof}
    Next example shows that the Berezin range of a composition operator with symbol $\phi(z)=Az$, where A is a non-constant diagonal matrix, need not  always be convex.
\begin{example}
   	Consider the Fock space $\mathcal{F}^2(\mathbb{C}^2)$. Suppose that $\phi(z)=Az$, where $
   	A=
   	\begin{pmatrix}
    1 & 0 \\
   	0 & i
   	\end{pmatrix}$. Then the Berezin transform of  the composition operator is given by
   	$$
   	\widetilde{C}{_\phi(z_1,z_2)}=e^{(i-1)|z_2|^2}.
   	$$ 
   	Thus
   	$$
   	\textit{Ber}(C_{\phi})=\{e^{(i-1)|z_2|^2}:z_2 \in \mathbb{C}\}=\{e^{-t}e^{it}:t\geq 0\}.
   	$$
   	When $t=0,t=\pi$ the points $1,-e^{-\pi} \in  \textit{Ber}(C_{\phi}) .$
   	We will show that the midpoint
   	$$
   	m=\frac{1-e^{-\pi}}{2} \notin  \textit{Ber}(C_{\phi}).
   	$$
   	Suppose that $m \in  \textit{Ber}(C_{\phi})$. Then there exists some $t\geq0$ such that
   	$m=e^{-t}e^{it}$. Since $m>0,$ we have $t=2k\pi $ for some $k \in \mathbb{N} \cup \{0\}. $ Then $m=e^{-2k\pi }$ for some $k$. That is $m\in \{1,e^{-2\pi},e^{-4\pi},...\}$, but $\frac{1-e^{-\pi}}{2} \notin \{1,e^{-2\pi},e^{-4\pi},...\}.
   	$
\end{example}
\begin{theorem}
    For a fixed $k\in\{1,2,...,n\},$ consider the $n\times n$ diagonal matrix $A_k=[a_{kj}]$ with entries $a_{kk}=a+ib$ and $a_{jj}=1$ $\forall$ $k\not=j$, where $a^2+b^2\leq 1$. Then the Berezin range of the composition operator with symbol $\phi(z)=A_kz$ is convex if and only if $b=0$.   
\end{theorem}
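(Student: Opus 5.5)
The plan is to compute the Berezin transform explicitly, reduce $\textit{Ber}(C_\phi)$ to a one-parameter curve, and then treat the cases $b=0$ and $b\neq 0$ separately. This mirrors the scalar case proved just above.

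\textbf{Step 1 (the Berezin transform).} First note that $\|A_k\|=\max\{1,\sqrt{a^2+b^2}\}\le 1$ and $B=0$, so $C_\phi$ is bounded by the Carswell--MacCluer--Schuster theorem. Using $\widetilde{C}_\phi(w)=e^{-\alpha|w|^2}k_w(A_kw)$ and $\langle A_kw,w\rangle=|w|^2+(a+ib-1)|w_k|^2$, we get
$$\widetilde{C}_\phi(w)=e^{\alpha(a-1+ib)|w_k|^2}.$$
Put $s=\alpha|w_k|^2$; as $w$ ranges over $\mathbb{C}^n$, $s$ ranges over all of $[0,\infty)$. Hence
$$\textit{Ber}(C_\phi)=\{e^{s(a-1)}e^{isb}: s\ge 0\}.$$

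\textbf{Step 2 (sufficiency, $b=0$).} Here $a\in[-1,1]$. If $a=1$, then $\textit{Ber}(C_\phi)=\{1\}$. If $a<1$, the map $s\mapsto e^{s(a-1)}$ is a continuous decreasing bijection of $[0,\infty)$ onto $(0,1]$. In both cases the range is convex.

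\textbf{Step 3 (necessity, $b\neq 0$).} The constraint $a^2+b^2\le 1$ with $b\neq 0$ forces $a<1$, so the curve is a genuine logarithmic spiral. To avoid the collinearity determinant used in the scalar case, I would locate two real points on it:
\begin{itemize}
\item $s=\pi/|b|$ gives the negative real point $-e^{\pi(a-1)/|b|}$;
\item $s=2\pi/|b|$ gives the positive real point $e^{2\pi(a-1)/|b|}$.
\end{itemize}
If $\textit{Ber}(C_\phi)$ were convex, it would contain the segment joining these two points, and in particular the point $0$. But $|e^{s(a-1)}e^{isb}|=e^{s(a-1)}>0$ for every $s$, so $0\notin\textit{Ber}(C_\phi)$. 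This contradiction shows the range is not convex.

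An alternative argument for Step 3 uses that a convex set equal to the image of a path must be a segment. The segment would contain the real points above, hence lie on $\mathbb{R}$. Yet small $s>0$ yields nonreal values, since $\sin(sb)\neq 0$.

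\textbf{Main obstacle.} The computation in Step 1 is routine. The only real care needed is in Step 3: checking that $b\neq 0$ indeed forces $a<1$, and choosing convenient witness points, which the $0\notin\textit{Ber}(C_\phi)$ argument handles cleanly.
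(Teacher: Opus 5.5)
Your proof is correct and has the same skeleton as the paper's. Both compute $\widetilde{C}_\phi(w)=e^{\alpha(a-1+ib)|w_k|^2}$ and reduce $\textit{Ber}(C_\phi)$ to the spiral $s\mapsto e^{s(a-1)}e^{isb}$, $s\ge 0$. Both then refute convexity for $b\neq 0$ by exhibiting a segment with endpoints in the range that leaves the range.

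The difference is the choice of witness points.
\begin{itemize}
\item The paper takes the two positive real points $1$ (at $t=0$) and $q=e^{2\pi(a-1)/b}$ (at $t=2\pi/(b\alpha)$). Since their midpoint is positive real, it would force $\alpha b t\in 2\pi\mathbb{Z}$, so it would have to be a power $q^k$; it is not, because $0<q<1$. This needs $a<1$, and as written it tacitly assumes $b>0$: for $b<0$ the chosen parameter $t$ is negative, so the $b<0$ case only follows afterwards by complex conjugation.
\item You take a negative and a positive real point, so $0$ lies on the segment, and you exclude $0$ by the one-line observation $|\widetilde{C}_\phi|=e^{s(a-1)}>0$. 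Working with $|b|$ handles both signs of $b$ at once. Your argument also never uses $a<1$; it needs only $b\neq 0$.
\end{itemize}
Your case $b=0$ is also slightly more accurate than the paper's. The paper asserts the range is $(0,1]$ without separating off $a=1$, where it is $\{1\}$.

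One caveat applies only to your alternative argument. The claim that a convex set equal to the image of a path must be a segment is false for general continuous paths, since a space-filling curve can have a square as its image. It does hold here because the spiral is smooth: its image has planar measure zero, so a convex image has empty interior and lies in a line. Your main argument does not depend on this.
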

\begin{proof}
    Suppose that $b=0$. Then the Berezin
    range is given by
    $$
    \textit{Ber}(C_{\phi})=\{e^{\alpha(a-1)|z_i|^2}:z_i \in \mathbb{C}\}=(0,1],
    $$
    which is a convex subset of $\mathbb{C}.$
    
    Conversely suppose that $\textit{Ber}(C_{\phi})$ is convex. We have to show that b=0. Suppose $b\not=0$. It is enough to prove that $\textit{Ber}(C_{\phi})$ is not convex. We have 
    $$\widetilde{C}_{\phi}(z_1,z_2,...,z_n)=e^{\alpha(a-1)|z_i|^2}e^{ib|z_i|^2}=e^{\alpha(a-1)t}e^{i\alpha bt},\quad t\geq 0.$$
    When $t=0 \text{ and } t= \frac{2\pi}{b\alpha}, $ we get $1,e^{(a-1)\frac{2\pi}{b} }\in \textit{Ber}(C_{\phi})$. Thus the midpoint $m=\frac{1+e^{(a-1)\frac{2\pi}{b} }}{2}\in \textit{Ber}(C_{\phi}).$ Then $m=e^{\alpha(a-1)t}e^{i\alpha bt}$ for some $t\geq0.$  Since $m>0, ~\alpha bt=2k\pi$ for some $k\in \mathbb{N}\cup\{0\}$. This implies $m=e^{(a-1)}\frac{2k\pi}{b}$, i.e, $m \in \{1,e^{(a-1)}\frac{2\pi}{b},e^{(a-1)}\frac{4\pi}{b},...\}.$ But $\frac{1+e^{(a-1)\frac{2\pi}{b} }}{2}\not\in \{1,e^{(a-1)}\frac{2\pi}{b},e^{(a-1)}\frac{4\pi}{b},...\}$. Hence, $\textit{Ber}(C_{\phi})$ is not convex.
\end{proof}
    \textbf{Declaration of competing interest}
	
	There is no competing interest.\\
	
	\textbf{Data availability}
	
	No data was used for the research described in the article.\\
	
	{\bf Acknowledgments.}   The first author is supported by the Junior Research Fellowship of UGC (University Grants Commission, India). The second author is supported by the Senior Research Fellowship (09/0239(13298)/2022-EMR-I) of CSIR (Council of Scientific and Industrial Research, India).
	 
	\nocite{*}
	\bibliographystyle{amsplain}
	
\end{document}